\newtheorem{theo}{Theorem}[section]
\newtheorem{lem}[theo]{Lemma}
\newtheorem{prop}[theo]{Proposition}
\newtheorem{claim}[theo]{Claim}
\theoremstyle{remark}
\numberwithin{equation}{section}
\numberwithin{theo}{section}
\newcommand\R{\text{I\!R}}
\newcommand\N{\text{I\!N}}
\newcommand\al{\alpha}
\newcommand\e{\varepsilon}
\newcommand\de{\delta}
\newcommand\be{\beta}
\newcommand{\Om}{\Omega}
\newcommand{\fr}{\partial}
\newcommand{\equ}{\eqref}
\newcommand{\grad}{\nabla}
\newcommand{\ml}{\mathcal}
\newcommand{\DD}{\ml{D}}
\newcommand{\sm}{\setminus}
\newcommand{\la}{\lambda}
\newcommand{\st}{such that }
\newcommand{\dem}{\bf Proof:}
\newcommand\lap{\Delta}
\newcommand\lab{\Delta_g}
\newcommand\ti{\tilde}
\newcommand{\lf}{\left}
\newcommand{\rg}{\right}
\newcommand\p{^}
\newcommand\ds{\displaystyle}
\newcommand{\bebs}{\begin{equation*}\begin{split}}
\newcommand{\ee}{\end{equation*}}
\newcommand{\esp}{\end{split}}
\DeclareMathAlphabet{\mathpzc}{OT1}{pcz}{m}{it}
\begin{document}
\title{Bubbling solutions for Moser-Trudinger type equations\\ on compact Riemann surfaces}
\author{Pablo
Figueroa\thanks{Escuela de Educaci\'on de Matem\'atica e Inform\'atica Educativa, Universidad Cat\'olica Silva Henr\'iquez, General Jofr\'e 462, Santiago, Chile. E-mail: pfigueroas@ucsh.cl. Author
supported by grants Fondecyt Postdoctorado 3120039 and Fondecyt Iniciaci\'on 11130517, Chile.}
\qquad and \qquad Monica Musso\thanks{ Departamento de
Matem\'atica, Pontificia Universidad Cat\'olica de Chile, Avenida
Vicu\~na Mackenna 4860, Santiago, Chile. E-mail:
mmusso@mat.uc.cl. Author supported by grants FONDECYT Grant 1160135 and Millennium Nucleus Center for Analysis of PDE, NC130017 }\\
}
\date{\today}
\maketitle

\begin{abstract}
\noindent We study an elliptic equation related to the
Moser-Trudinger inequality on a compact Riemann surface $(S,g)$,
$$\ds \Delta_g u+\la \lf(ue^{u^2}-{1\over |S|}  \int_S ue^{u^2} dv_g\rg)=0,\quad\text{in
$S$},\qquad
\ds\int_S u\,dv_g=0,
$$
where $\la>0$ is a small parameter,  $|S|$ is the area of $S$, $\Delta_g$ is the Laplace-Beltrami operator and $dv_g$ is the area element. Given any integer $k\ge 1$, under general conditions on $S$ we find a bubbling solution $u_\la$ which blows up at exactly $k$ points in $S$, as $\la\to0$. When
$S$ is a flat two-torus in rectangular form, we find that either seven or nine families of such solutions do exist for $k=2$. In particular, in any square flat two-torus actually nine families of bubbling solutions with two bubbling points do exist. If $S$ is a Riemann surface with non-constant Robin's function then at least two bubbling solutions with $k=1$ exists.
\end{abstract}

\emph{Keywords}: Moser-Trudinger inequality. Green's function.
\\[0.1cm]

\emph{2010 AMS Subject Classification}: 35J08, 35J15, 53C20

\section{Introduction}
\noindent  Let $(S,g)$ be a compact, orientable Riemann surface. We denote by $|S|$  the area of $S$,
$\Delta_g$ the Laplace-Beltrami operator on $S$ and $dv_g$ the area
element. This paper is devoted to the construction of solutions to the problem
\begin{equation}\label{mt}
\begin{cases}
\ds \Delta_g u+\la \lf(ue^{u^2}-{1\over |S|}  \int_S ue^{u^2} dv_g\rg)=0,\quad\text{ in
$S$},\\
\\[-0.3cm]
\ds\int_S u\, dv_g=0,
\end{cases}
\end{equation}
for any values of the small parameter $\la>0$. These solutions turn out to blow-up, as the parameter $\la \to 0^+$ at very specific points of $S$.

Problem \eqref{mt} is related to  the Trudinger-Moser inequality \cite{moser}
over a compact Riemann surface ($S$, $g$), which can be stated as follows
$$\sup\lf\{\int_S e^{4\pi u^2}\,dv_g\,:\, u\in H^{1}(S),\,\int_Su\,dv_g=0\text{ and }\int_S|\grad u|^2\,dv_g=1\rg\}<+\infty.$$
This type of inequality was first proved in \cite{Fo} on compact Riemannian manifolds of any dimension $n$. When the dimension is two, this inequality was proved  in \cite{Li0}  on manifolds with and without boundary,  and the existence of extremal functions was established. We refer also to \cite{Li1,Li2,YYang,YYang1} for related results and generalizations.

 It is simple to see that critical points of the above constrained variational problem satisfy, after a simple scaling, an equation of the form \eqref{mt}. Our purpose then is to study the existence of solutions to \eqref{mt} for $\la$ positive and small and to describe their asymptotic behavior as $\la\to0^+$.

Weak solutions of \eqref{mt} are critical points of the following energy functional
\begin{equation}\label{energy}
J_\la (u) = {1\over 2} \int_S | \nabla u |_g^2\,dv_g -{\la \over 2}
\int_S e^{u^2}\,dv_g , \quad u\in \bar H,
\end{equation}
where $\bar H =\{u \in H^1(S): \int_S u dv_g=0\}$, which corresponds to the free energy associated to the critical Trudinger embedding in the sense of Orlicz spaces \cite{poho, trudinger, yudo}
$$\bar H\ni u\mapsto e^{u^2}\in L^p(S)\ \ \forall p\ge 1.$$
The energy functional \eqref{energy} is thus well defined and it has a Mountain Pass geometric structure. Nevertheless, it is characterized by lack of compactness, which makes it impossible to search for critical points of \eqref{energy} using the classical tools of the Calculus of Variations or of the Critical Point Theory.
Indeed, loss of compactness translates into the presence of non-convergent Palais-Smale  sequences for the corresponding functional and space of functions.

\medskip
To better understand this, let us consider the flat case, namely, when $S \equiv \Om\subset \R^2$ is a bounded domain. The Trudinger-Moser inequality concerns the limiting case $p=2$ of the Sobolev embeddings $W^{1,p}(\Om)\subset L^{2p\over 2-p}(\Om)$. It states that there exists $C_2 >0$  such that
$$\sup_{\lf\{v\in W_0^{1,2}(\Om),\ \|\grad v\|_{L^2(\Om)}=1\rg\} }\int_\Om e^{\alpha |v|^2}\, dx\; \,\begin{cases}
\le C_2|\Om|,&\text{ if }\al\le \al_2\\
=+\infty,&\text{ if }\al> \al_2
\end{cases} ,$$
where $|\Om|$ is the area of $\Om$
and $\alpha_2=4\pi$. After a simple scaling, critical points of the above
constrained variational problem satisfy
the equation
\begin{equation}\label{tmce}
\Delta u+\la ue^{u^2}=0\quad \text{in
$\Om$}, \quad u=0 \quad \text{on $\fr\Om$}
,
\end{equation}
where $\la>0$, whose associated energy functional is
\begin{equation*}
I_\la (u) = {1\over 2} \int_\Om | \nabla u |^2 -{\la \over 2}
\int_\Om e^{u^2}, \quad u\in H_0^1(\Om).
\end{equation*}
 For the functional $I_\la$ a precise classification of all Palais-Smale sequences  does not seem possible after the results in \cite{adipas}.  Some information is available for sequences of solutions to \eqref{tmce}, thanks to the result in \cite{Dru}, that states

\medskip
{\it Assume that $u_n$ solves problem \eqref{tmce} for $\la=\la_n$, with $I_{\la_n}(u_n)$ bounded and $\la_n\to 0$  as $n\to+\infty$. Then, passing to a subsequence if necessary, there is an integer $k\ge 0$ such that as $n\to+\infty$}
\begin{equation}\label{jlnun}
I_{\la_n}(u_n)=2\pi k +o(1).
\end{equation}

\smallskip
\noindent A more precise characterization of the sequence of solutions $(u_n)_n$ is known when $k=1$, see \cite{AdDru}: for all large $n$, the
solution $u_n$ has only one isolated maximum,  whose value diverges to $+\infty$ as $\la_n\to 0$, which is attained
around a very specific point $x_0\in \Omega$. In fact, $x_0$  is  a critical point
of Robin's function, defined as $x\mapsto H_\Om(x,x)$, where $H_\Om$ is the regular part of the corresponding Green's function for the homogeneous Dirichlet  problem in $\Om$.

Concerning existence of solutions to \eqref{tmce} satisfying \equ{jlnun},
 in \cite{adipas}, it is proven  that there is a $\la_0>0$ such that a
solution to \equ{tmce} exists whenever $0<\la <\la_0$ (this is in fact
true for a larger class of nonlinearities with {\em critical
exponential growth}). By construction this solution falls into the bubbling category \equ{jlnun} with $k=1$ as $\la
\to 0$. If $\Omega $ has a sufficiently small hole, Struwe in \cite{struwe2} built a solution taking advantage of the presence of topology. This solution exists for a class of
nonlinearities, perturbation of the Trudinger-Moser one, that also
include $\la u e^{u^2-u}$ for which no solution exists for small
$\la$, in a disk, see \cite{adi2, FR}. It is reasonable to believe
that the construction of Struwe in reality produces a second
solution of equation \equ{tmce}, but this is not known yet. Similar results to \cite{adistruwe,Dru} on compact Riemann surfaces are obtained in \cite{YYang}.

\medskip
 In \cite{dmr1} authors addressed the existence of
bubbling solutions for \eqref{tmce} as $\la\to 0$ when $\Om$ is not contractible to a point, and $k$ in \eqref{jlnun} is any integer number. They provide sufficient conditions for the existence of solutions to \eqref{tmce} for small $\la$, which satisfy the bubbling condition \eqref{jlnun} and give a precise characterization of its bubbling location. In particular, they show
that if $\Om$ has a hole of any size, namely, $\Om$ is not simple connected then at least one of such a
solutions exists with $k=2$  and if $\Om$ has $d\ge1$ holes, then
$d+1$ solutions with $k=1$ exist.
\medskip

The question we address in this paper is whether it is possible to construct a family of solutions $u_\la $ to problem \eqref{mt}, for any $\la >0$ small, whose energy $J_\la (u_\la )$, defined in \eqref{energy}, is quantized in the sense of \eqref{jlnun}, and whose asymptotic behavior resembles a bubbling phenomena at points, for $\la \to 0$.

\medskip
In order to state our general result, let us introduce some notations. For a given Riemann surface $(S,g)$, we introduce the Green's function $G(x,p)$ with pole at $p \in S$ as the solution of
\begin{equation} \label{green}
\left\{ \begin{array}{ll} \ds-\Delta_g G(\cdot,p)= \delta_{p}-\frac{1}{|S|} &\text{in $S$}\\[0.4cm]
\ds\int_S G(x,p)\, dv_g=0. &
\end{array} \right.
\end{equation}
Let $k\geq 1$ be an integer, $\xi_1,\xi_2,\ldots, \xi_k \in S$ be $k$ distinct points and $m_1,m_2,\ldots, m_k$ be $k$ positive numbers.
We define the following functional
\begin{equation}\label{fik}
\begin{split}
\psi_k(\xi,m)=&\,(\log 16-2)\sum_{j=1}^km_j^2+\sum_{j=1}^km_j^2\log m_j^2-4\pi\sum_{j=1}^km_j^2H(\xi_j,\xi_j)\\
&\,-4\pi\sum_{i=1}^k\sum_{j=1,j\ne i}^km_i m_jG(\xi_i,\xi_j),
\end{split}
\end{equation}
where $\xi=(\xi_1,\dots,\xi_k)$ and $m=(m_1,\dots,m_k)$. Here, $G$ is the Green's function  for the Laplace-Beltrami operator on $S$ given by \eqref{green} and  $H$ is its regular part.  Let us consider an open set ${\mathcal D}$ compactly contained in
the domain of the functional $\psi_k$, namely
$$
\bar {\mathcal D} \subset  \{(\xi, m)\in S^k \times \R_+^k \ \mid \
\xi_i\ne \xi_j \ \forall \, i\ne j\ \} .
$$
We say that $\psi_k$ has a {\em stable critical point situation}
if there exists a $\delta>0$ such that for any $g\in C^1(\bar
{\mathcal D} )$ with $\|g\|_{C^1(\bar {\mathcal D} )} < \delta$, the
perturbed functional $\psi_k+ g $ has a critical point in
${\mathcal D}$.

\medskip
We can now state our general result.

\begin{theo} \label{teo2}
Let $(S,g)$ be a compact, orientable Riemann surface.
Let $k\ge 1$ and assume that there is an open set $\DD$ where
$\psi_k$ has a stable critical point situation. Then, for all
small $\la>0$  there exists a family of solutions $u_\la $ of problem $\equ{mt}$
such that as $\la\to 0$
\begin{equation}\label{uno}  {1\over 2} \int_S | \nabla u_\la |^2_g\, dv_g -{\la \over 2}  \int_S e^{u_\la^2}\, dv_g \ =\  2k\pi \, +\, O(\la).
\end{equation}
Moreover, there exists $(\xi_\la, m_\la)\in \DD$, with $\xi_\la=\big(\xi_\la^{(1)},\dots,\xi_\la^{(k)}\big)$ and $m_\la=\big(m_\la^{(1)},\dots,m_\la^{(k)}\big)$ such that, passing to a subsequence, $(\xi_\la,m_\la)\to (\xi_0,m_0)$ with $\nabla \psi_k (\xi_0, m_0)=0$ and
\begin{equation}\label{due}
u_\la (x) \ =\ \sqrt{\la} \, \left (\, 8\pi \sum_{j=1}^k  m_{\la}^{(j)}G(x,\xi_{\la}^{(j)}) \ + \ O(\la) \, \right )
\end{equation}
as $\la\to 0$, uniformly on compact subsets of $S\setminus \{\xi_1,\dots, \xi_k\}$.

\end{theo}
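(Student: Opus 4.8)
I would prove the theorem by a finite-dimensional Lyapunov--Schmidt reduction built on concentrating Liouville-type bubbles. First I would fix isothermal coordinates near each prospective blow-up point $\xi_j$ and write down an approximate solution $U=U_{\la,\xi,m}$ which, near $\xi_j$, is a scaled standard Liouville profile added to a large constant — schematically $U\approx\mu_j+\mu_j^{-1}w\!\left(\frac{x-\xi_j}{\delta_j}\right)$ with $w$ of the form $-2\log(\delta^{2}+|y|^{2})+\mathrm{const}$ — and which, away from the $\xi_j$'s, is matched to the global field $\sqrt{\la}\,8\pi\sum_j m_jG(\cdot,\xi_j)$, after a projection that forces $U\in\bar H$ and absorbs the average term in \eqref{mt}. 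The height $\mu_j$ and the concentration scale $\delta_j$ are not free parameters: they must be slaved to $(\la,m_j,\xi)$ through two matching relations — a ``Liouville relation'' of the form $\la\,\mu_j^{2}\delta_j^{2}e^{\mu_j^{2}}\sim\mathrm{const}$, which makes $\la Ue^{U^2}$ concentrate with the correct Dirac weight, and an ``outer matching'' $\mu_j\sim(2\sqrt{\la}\,m_j)^{-1}$; in particular $\mu_j\to\infty$ like $\la^{-1/2}$ while $\delta_j$ is superexponentially small. I would then compute the error $S(U):=\Delta_g U+\la\big(Ue^{U^2}-\tfrac1{|S|}\int_S Ue^{U^2}dv_g\big)$ and bound $\|S(U)\|_{*}$ in a weighted $L^{\infty}$ (or weighted $L^{p}$) norm adapted to the bubbles, obtaining a power-of-$\la$ estimate.

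Next, writing $u=U+\phi$, I would study the linearized operator $L\phi=\Delta_g\phi+\la\big((1+2U^2)e^{U^2}\phi-\text{avg}\big)$, which near each bubble is modelled, after rescaling, on the linearized Liouville operator $\Delta+e^{w}$ on $\R^{2}$, whose bounded kernel is spanned by the dilation mode $Z_{0j}$ and the two translation modes $Z_{1j},Z_{2j}$. By a blow-up/contradiction argument together with the nondegeneracy of the standard Liouville bubble I would prove that $L$ is uniformly invertible, in the chosen weighted norms, on the $L^{2}$-orthogonal complement of $\mathrm{span}\{Z_{ij}\}$, uniformly for $(\xi,m)$ in the compact set $\bar\DD$. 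Equipped with this bounded right inverse, I would solve the projected problem
\begin{equation*}
L\phi=-S(U)-N(\phi)+\sum_{i,j}c_{ij}Z_{ij},\qquad \phi\perp Z_{ij},
\end{equation*}
by a contraction mapping, where $N(\phi)$ collects the quadratic-and-higher part of $e^{(U+\phi)^{2}}-e^{U^2}-2Ue^{U^2}\phi$; this yields $\phi=\phi(\la,\xi,m)$ with $\|\phi\|_{*}\lesssim\|S(U)\|_{*}$ and $C^{1}$ dependence on $(\xi,m)$. Controlling $N(\phi)$ is the delicate analytic point: near a bubble $U$ is of order $\la^{-1/2}$ while $\phi$ is small, so the cross term $2U\phi$ is only $O(1)$, and the contributions $e^{U^2}(e^{2U\phi+\phi^{2}}-1-2U\phi)$ must be estimated using Trudinger--Moser/Orlicz inequalities — as in the embedding $\bar H\ni u\mapsto e^{u^2}\in L^{p}(S)$ recalled earlier — rather than with plain H\"older bounds.

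Then I would show that $u=U+\phi$ solves \eqref{mt} if and only if all the multipliers $c_{ij}$ vanish, and that this happens exactly when $(\xi,m)$ is a critical point of the reduced energy $\mathcal F_\la(\xi,m):=J_\la\big(U_{\la,\xi,m}+\phi\big)$. The crucial ingredient is the expansion, valid in $C^{1}(\bar\DD)$,
\begin{equation*}
\mathcal F_\la(\xi,m)=2k\pi+c_0\,\la+c_1\,\la\,\big(\psi_k(\xi,m)+o(1)\big)\qquad\text{as }\la\to0,
\end{equation*}
with $c_1\neq0$ an explicit constant: each bubble carries an energy quantum $2\pi$, accounting for the leading $2k\pi$; the constant $\log16-2$ and the terms $m_j^{2}\log m_j^{2}$ in $\psi_k$ emerge from the precise asymptotics of the Liouville integrals $\int e^{w}(\cdots)$, whereas the terms $-4\pi m_j^{2}H(\xi_j,\xi_j)$ and $-4\pi m_im_jG(\xi_i,\xi_j)$ come from the regular part of the Green's function and from the interaction between distinct bubbles, respectively.

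Finally, since $\psi_k$ has a stable critical point situation in $\DD$ and $c_1^{-1}\la^{-1}\big(\mathcal F_\la-2k\pi-c_0\la\big)$ is, by the expansion above, a $C^{1}(\bar\DD)$-small perturbation of $\psi_k$, the reduced energy $\mathcal F_\la$ has a critical point $(\xi_\la,m_\la)\in\DD$ for all small $\la$; the associated $u_\la=U_{\la,\xi_\la,m_\la}+\phi$ is then a solution of \eqref{mt}, and \eqref{uno}, \eqref{due} follow from the expansion of $\mathcal F_\la$, the bound $\|\phi\|_{*}=O(\la)$ and the explicit form of $U$ on compact subsets of $S\setminus\{\xi_1,\dots,\xi_k\}$, while compactness of $\bar\DD$ gives a subsequence with $(\xi_\la,m_\la)\to(\xi_0,m_0)$ and $\grad\psi_k(\xi_0,m_0)=0$. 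The main obstacle throughout is the exponential nonlinearity $ue^{u^2}$: it forces the transcendental, $\la$-dependent calibration of $\mu_j$ and $\delta_j$ making $\la Ue^{U^2}$ converge to Dirac masses of weight exactly $8\pi m_j$; it requires all error and nonlinear estimates to be run in Orlicz/Trudinger--Moser norms, since cross terms such as $e^{2U\phi}$ are not small; and it turns the energy expansion — in particular the extraction of the precise constants in $\psi_k$ — into a delicate analysis of $\int_S e^{U^2}dv_g$; on top of this one must handle the global features of the problem on $S$, namely the constraint $\int_S u\,dv_g=0$, the correction terms $-\tfrac1{|S|}\int_S$, and the conformal factor of $g$ entering the local Liouville analysis through the isothermal coordinates.
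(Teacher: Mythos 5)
Your proposal is correct and follows essentially the same Lyapunov--Schmidt scheme as the paper: a projected Liouville ansatz with $\mu_j,\delta_j$ slaved to $(\la,m,\xi)$ through the same Liouville and outer-matching relations, a linear theory invertible modulo the three approximate kernel modes per bubble, a contraction for the projected nonlinear problem, and a $C^1$-expansion of the reduced energy whose leading $\la$-dependent part is $8\pi\la\psi_k(\xi,m)$, after which the stable critical point situation produces $(\xi_\la,m_\la)$. The only noteworthy divergence is technical: where you invoke Orlicz/Trudinger--Moser control for the cross terms in $N(\phi)$, the paper instead works in a tailored weighted $L^\infty$ norm $\|\cdot\|_*$ whose weight $\rho$ already carries the $e^{\la m_j^2 w_j^2}$ and $\la^{-1}$ factors in the intermediate and outer regimes, and it also replaces the true linearization $f'(V)$ by the simpler positive kernel $K=\sum_j\chi_je^{-\varphi_j}\e_j^{-2}e^{w_j}$ up to $O(\la)$ in that norm, so both the linear theory and the contraction run in plain weighted $L^\infty$ with the actual a priori bound $\|\phi\|_\infty=O(\la)$ making the cross term $2U\tilde\phi$ small rather than merely $O(1)$.
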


\medskip
Concrete examples of surfaces $S$ on which problem \eqref{mt} has solutions satisfying \eqref{uno}-\eqref{due} depends on the possibility to ensure the existence of special critical points for the function $\psi_k$ defined in \eqref{fik}.

\medskip
 To start with, we observe that if $S$ is a compact Riemann surface with non-constant Robin's function then at least two bubbling solutions with $k=1$ exists. Indeed, since $S$ is compact then $H(\xi,\xi)$ attains its minimum and its maximum.  In this case, it is easy to show that $\psi_1$ has two  stable critical point situations. Thus, problem \eqref{mt} has one solution which is bubbling near the global minimizer of $H (\xi , \xi )$, and another solution which is bubbling near the global maximizer, as $\lambda \to 0^+$. Unfortunately, this kind of solutions are hopeless to be found for instance when $S$ is the unit sphere $\mathbb{S}^2$ in $\R^3$ or  when $S$ is the flat two-torus $T$, since, in these examples, the function $H(\xi,\xi)$ is constant. Nevertheless, in these two examples, we can prove the existence of solutions with $k=2$ bubbling points. The case of the flat torus is particularly surprising.

 \medskip
 If $k=2$, the functional $\psi_k$ in \eqref{fik} takes the simplified form
$$
\psi_2(\xi,m)=\,A \sum_{j=1}^2m_j^2+\sum_{j=1}^km_j^2\log m_j^2 - 8\pi m_1 m_2 G(\xi_1 , \xi_2)
$$
where $A= \log 16-2 -8\pi c$, where $c$ is the constant value of $H(\xi , \xi)$ in the case of the sphere and the flat torus.

 \medskip
 Let us start with $S=\mathbb{S}^2$. Since problem \eqref{mt} is invariant under rotations, it is not restrictive to look for solutions with one bubbling point to be a fixed point on $\mathbb{S}^2$, say $\xi_2$. Indeed, by a rotation, one can get another solution bubbling at any other point of $\mathbb{S}^2$, just rotating $\xi_2$ up to this other point.
 Thus, we fix $\xi_2 \in \mathbb{S}^2$ and we are reduced to study the existence of critical points for  $\xi_1 \in \mathbb{S}^2 \mapsto G(\xi_1,\xi_2)$. Notice that $\xi_1 \mapsto G(\xi_1 ,\xi_2)$ has a global minimum.  Then from simple arguments, one sees that $\psi_2$ has a stable critical point situation. We thus get the validity of

\begin{theo}\label{s2}
Assume that $S=\mathbb{S}^2$ is the unit sphere in $\R^3$ and fix $\xi_2 \in \mathbb{S}^2$. Then there exists a family of solutions $u_{\la}$ to problem \eqref{mt} with two  bubbling points such that as $\la\to 0$ the two bubbling points converge to $(\xi_1,\xi_2)$ with $\xi_1$ the global minimum of $G(\cdot,\xi_2)$ and
$${1\over 2} \int_{\mathbb{S}^2} | \nabla u_{\la} |^2_{g_0}\,dv_{g_0} -{\la \over 2}
\int_{\mathbb{S}^2} e^{u^2_{\la}}\,dv_{g_0}=4\pi + o(1), \quad {\mbox {as}} \quad \la \to 0$$
where $g_0$ is the standard round metric on $\mathbb{S}^2$.
\end{theo}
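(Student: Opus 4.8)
The plan is to obtain Theorem~\ref{s2} from Theorem~\ref{teo2} applied with $k=2$; everything then reduces to exhibiting an open set $\mathcal D\subset\mathbb S^2\times\mathbb S^2\times\R_+^2$ on which the functional $\psi_2(\xi_1,\xi_2,m_1,m_2)=A(m_1^2+m_2^2)+m_1^2\log m_1^2+m_2^2\log m_2^2-8\pi m_1m_2\,G(\xi_1,\xi_2)$ has a stable critical point situation, together with the observation that the only critical points of $\psi_2$ inside $\mathcal D$ have $\xi_1=-\xi_2$ (the global minimum of $G(\cdot,\xi_2)$) and $m_1=m_2$. The energy relation $4\pi+o(1)$ is then exactly \eqref{uno} for $k=2$, and the requirement that the fixed bubbling point be the prescribed $\xi_2$ is met at the end by the rotational invariance of \eqref{mt}: once we have a solution bubbling at some antipodal pair, an isometry of $(\mathbb S^2,g_0)$ moves it to one bubbling at $(-\xi_2,\xi_2)$ with the same energy.

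First I would write $G$ in closed form, $G(x,p)=-\tfrac{1}{4\pi}\log(1-\langle x,p\rangle)+\tfrac{\log 2-1}{4\pi}$, which shows that $x\mapsto G(x,p)$ has a strict, nondegenerate global minimum at $x=-p$, with value $G(-p,p)=-\tfrac{1}{4\pi}$; equivalently $H(p,p)\equiv\tfrac{2\log 2-1}{4\pi}$, i.e.\ $A=0$. Since $-8\pi m_1m_2<0$ for $m_1,m_2>0$, the map $\xi_1\mapsto\psi_2$ attains, for every fixed $(\xi_2,m_1,m_2)$, its maximum exactly at $\xi_1=-\xi_2$, and the partial maximum is the reduced functional $\Psi(m_1,m_2):=A(m_1^2+m_2^2)+m_1^2\log m_1^2+m_2^2\log m_2^2+2m_1m_2$ on $\R_+^2$ (the coefficient $2$ is $-8\pi G(-p,p)$). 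A direct computation shows $\Psi$ has a unique critical point near the diagonal, at $m_1=m_2=m_\ast:=e^{-1}$, with Hessian $\begin{pmatrix}2&2\\2&2\end{pmatrix}$ there; the degeneracy along $m_1-m_2$ is resolved at fourth order, $\Psi=\Psi(m_\ast,m_\ast)-\tfrac{m_\ast^2}{3}s^4+O(s^6)$ along $m_1=m_\ast(1+s),\ m_2=m_\ast(1-s)$, so $(m_\ast,m_\ast)$ is an isolated critical point of saddle type (a local minimum along $m_1+m_2$, a local maximum along $m_1-m_2$) with Brouwer index $-1$.

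Next I would take $\mathcal D=\mathcal T\times\mathcal U$ with $\mathcal U$ a small disc around $(m_\ast,m_\ast)$ in $\R_+^2$ and $\mathcal T$ a thin tubular neighbourhood, in $\mathbb S^2\times\mathbb S^2$, of the critical submanifold $N=\{(\xi_1,\xi_2):\xi_1=-\xi_2\}\cong\mathbb S^2$ of $G$ (kept away from the diagonal, where $G$ is singular). One checks $\nabla\psi_2\ne0$ on $\partial\mathcal D$: on $\partial\mathcal T\times\overline{\mathcal U}$ because $\nabla_\xi G\ne0$ off $N$ while $m_1m_2$ is bounded away from $0$, and on $\mathcal T\times\partial\mathcal U$ because, $\mathcal T$ being thin, $\nabla_m\psi_2$ is a small perturbation of $\nabla\Psi$, which does not vanish on $\partial\mathcal U$. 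Then $\deg(\nabla\psi_2,\mathcal D,0)$ is defined; deforming $\psi_2$ — through maps with non-vanishing gradient on $\partial\mathcal D$ — to the decoupled function $-8\pi m_\ast^2\,G(\xi_1,\xi_2)+\Psi(m_1,m_2)$ and combining the product formula for the degree with the Morse-Bott identity $\deg\bigl(\nabla_\xi(-8\pi m_\ast^2G),\mathcal T,0\bigr)=(-1)^{\lambda}\chi(N)=\chi(\mathbb S^2)=2$ (here $N$ is a nondegenerate maximum of $-G$, with normal index $\lambda=2$ and normal bundle $\cong T\mathbb S^2$), one gets $\deg(\nabla\psi_2,\mathcal D,0)=2\cdot(-1)=-2\ne0$. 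By homotopy invariance, the same holds for $\psi_2+g$ whenever $\|g\|_{C^1(\overline{\mathcal D})}$ is small enough, so $\psi_2$ has a stable critical point situation in $\mathcal D$. Since moreover the only critical points of $\psi_2$ in $\mathcal D$ are the points $(-p,p,m_\ast,m_\ast)$, $p\in\mathbb S^2$, Theorem~\ref{teo2} with $k=2$ yields a family $u_\la$ whose two bubbling points converge, by \eqref{due}, to one such antipodal pair $(-p_0,p_0)$; a rotation of $(\mathbb S^2,g_0)$ carries $p_0$ onto the prescribed $\xi_2$, and \eqref{uno} gives the energy $4\pi+O(\la)$.

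The main obstacle is establishing the stable critical point situation itself. The $SO(3)$-symmetry makes the critical set of $\psi_2$ a whole $2$-sphere rather than a point, and the critical point of the reduced functional $\Psi$ is genuinely degenerate — on $\mathbb S^2$ one lands exactly at the borderline value $-8\pi G(-p,p)=2$, between a local minimum and a nondegenerate saddle of $\Psi$ — so there is no naive ``a nondegenerate critical point persists'' argument; one must run the degree/Morse-Bott bookkeeping above (or, equivalently, a min-max linked to the critical $2$-sphere), checking carefully that the degree is genuinely nonzero and that no extraneous critical points enter $\mathcal D$. The remaining ingredients — the closed form of $G$ on $\mathbb S^2$, the fourth-order expansion of $\Psi$, and the concluding rotation — are elementary.
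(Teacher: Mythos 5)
Your proof is correct, but it takes a genuinely different route from the paper's. The paper fixes $\xi_2$ at the outset (invoking rotational invariance at the start rather than the end), minimizes $\ml{F}_\la(\cdot,m)$ over $\xi_1$ near the nondegenerate global minimum $-\xi_2$ of $G(\cdot,\xi_2)$ to get an implicit function $\xi_{1,\la}(m)$, and then produces $m_\la$ by ``the same procedure used in Claim \ref{claim3.8}'' --- the curve-intersection/IFT argument devised for the borderline torus case $4\pi G(z)=-1$. You instead exhibit directly a stable critical point situation for the full $\psi_2$ on $\mathbb S^2\times\mathbb S^2\times\R^2_+$ and feed it into Theorem \ref{teo2}: you compute $G$, $H$, $A=0$ in closed form on $\mathbb{S}^2$, identify the reduced weight functional $\Psi$ with its degenerate critical point $(e^{-1},e^{-1})$ (Hessian $\bigl(\begin{smallmatrix}2&2\\2&2\end{smallmatrix}\bigr)$, degeneracy resolved at fourth order), and then obtain $\deg(\nabla\psi_2,\mathcal D,0)=(-1)^{\lambda}\chi(N)\cdot(-1)=2\cdot(-1)=-2\neq 0$ via a Morse--Bott degree count on the antidiagonal $N\cong\mathbb S^2$ combined with the local degree $-1$ of the degenerate saddle of $\Psi$, after a homotopy to the decoupled function that you justify on $\partial\mathcal D$. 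Both routes must contend with the fact that $-8\pi G(-p,p)=2$ sits exactly at the borderline where $\det H_m\psi_2=64\pi G+16$ vanishes; your degree bookkeeping handles this degeneracy in a self-contained and robust way, without resting on the geometric persistence of the tangential intersection of the curves $s=f_0(t)$ and $t=f_0(s)$ that the paper's Claim \ref{claim3.8} appeals to. The price is the extra apparatus (Morse--Bott index formula, and the careful scale matching so that on $\mathcal T\times\partial\mathcal U$ the $O(\mathrm{dist}(\xi,N)^2)$ perturbation of $\nabla_m\Psi$ is dominated by the $O(\mathrm{radius}(\mathcal U)^3)$ lower bound on $|\nabla_m\Psi|$); the gains are that your argument provides the explicit constants and expansions on $\mathbb{S}^2$, and that it is demonstrably stable under arbitrary $C^1$-small perturbations as the definition of ``stable critical point situation'' requires.
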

\medskip
  Let us now discuss the case when $S=T$ is a rectangle and we look for solutions $u$ to \eqref{mt} that are  doubly periodic functions on $\fr T$.
  The surprising fact of this case is the multiplicity of solutions.

  \medskip
  Without loss of generality, assume that, in complex notation,
\begin{equation}\label{T}
T=\Big\{z=s {a\over 2}+t{\text{i}b\over 2}\ : \ s,t\in \Big(-{1\over 2},{1\over 2}\Big)\Big\},
\end{equation}
with i the imaginary unity. Since the equation is invariant under translations, if $u$ is a solution to \eqref{mt} then $u(\cdot+p)$ is also a solution to \eqref{mt} for any $p\in T$. In this setting, our next result states the existence of families of solutions of the form \eqref{due}, and satisfying \eqref{uno}, with $k=2$. The exact number of such solutions can be $7$ or $9$, depending on the value of
$$\tau= b/a.
$$
Our result states as follows.

\begin{theo}\label{torusk2}
Assume that $T$ is a rectangle in $\R^2$ given by \eqref{T} and $\tau=b/a$. Then there are $\tau_0<1<\tau_1$ such that if either $\tau\in(0,\tau_0]\cup[\tau_1,+\infty)$ or $\tau\in(\tau_0,\tau_1)$ then there is $\la_0>0$ such that for any $0<\la\le \la_0$ there exist either seven or nine different families of doubly periodic on $\fr T$ bubbling solutions $u_{\la,i}$ respectively to problem \eqref{mt}. These solutions safisfy
\begin{equation}\label{jluik2}
J_\la(u_{\la,i})={1\over 2} \int_T | \nabla u_{\la,i} |^2\,dx -{\la \over 2}
\int_T e^{u^2_{\la,i}}\,dx=4\pi + O(\la),
\end{equation}
as $\la\to 0$. Moreover, there exist bubbling points $\xi_{\la,i}=(\xi_{\la,i}^{(1)},\xi_{\la,i}^{(2)})\in T^2$ and weights \linebreak $m_{\la,i}=(m_{\la,i}^{(1)},m_{\la,i}^{(2)})\in \R^2_+$ such that, passing to a subsequence, $(\xi_{\la,i},m_{\la,i})\to (\xi_{0,i},m_{0,i})$ with $\grad\psi_2(\xi_{0,i},m_{0,i})=0$ and
\begin{equation}\label{abst}
u_{\la,i} (x) \ =\ \sqrt{\la} \, \left (\, 8\pi m_{\la,i}^{(1)}G\big(x,\xi_{\la,i}^{(1)}\big) + 8\pi m_{\la,i}^{(2)}G\big(x,\xi_{\la,i}^{(2)}\big) \ + \ O(\la) \, \right )
\end{equation}
as $\la\to 0$, uniformly on compact subsets of $T\setminus \{\xi_{0,i}^{(1)},\xi_{0,i}^{(2)}\}$, where $\xi_{0,i}=(\xi_{0,i}^{(1)},\xi_{0,i}^{(2)})$.
Here we intend that $i\in\{1,\dots,7\}$ when $\tau\in(0,\tau_0]\cup[\tau_1,+\infty)$, and $i\in\{1,\dots,9\}$ when $\tau\in ( \tau_0,\tau_1)$.

\end{theo}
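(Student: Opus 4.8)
The plan is to derive Theorem~\ref{torusk2} from Theorem~\ref{teo2}: it suffices to exhibit, inside pairwise disjoint open sets $\DD_1,\dots,\DD_N\subset T^2\times\R_+^2$ (with $N=7$ or $N=9$), a stable critical point situation for the reduced functional $\psi_2$. First I would normalise the problem. Rescaling $T$ by a factor $\mu$ turns \eqref{mt} into the same equation with $\la$ replaced by $\mu^2\la$, so, since we let $\la\to0^+$, the number of families depends only on $\tau=b/a$; fix $a=1$, $b=\tau$. On a flat torus $H(\xi,\xi)\equiv c$ is constant and, by translation invariance, $G(\xi_1,\xi_2)=\wti G(\xi_1-\xi_2)$ where $\wti G$ is the Green's function with pole at $0$; hence
$$\psi_2(\xi_1,\xi_2,m_1,m_2)=A(m_1^2+m_2^2)+m_1^2\log m_1^2+m_2^2\log m_2^2-8\pi m_1m_2\,\wti G(\xi_1-\xi_2),\qquad A=\log16-2-8\pi c,$$
as recorded before the statement.

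Since $m_1,m_2>0$, the equations $\fr_{\xi_1}\psi_2=\fr_{\xi_2}\psi_2=0$ are equivalent to $\grad\wti G(\xi_1-\xi_2)=0$, so every critical point of $\psi_2$ lies over a critical point $\xi^*:=\xi_1-\xi_2$ of $\wti G$, and the critical set of $\psi_2$ is a finite union of translation orbits. I would then invoke the description of $\mathrm{Crit}(\wti G)$ on a rectangular torus: the three half-periods $\tfrac12,\ \tfrac{\mathrm i\tau}2,\ \tfrac{1+\mathrm i\tau}2$ are always critical (a consequence of the reflection symmetries of $T$), and — either citing Lin--Wang type results or reproving it through the $\theta_1$-representation of $\wti G$ — there are numbers $\tau_0<1<\tau_1$ that separate the behaviour of $\wti G$ at these points. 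For $\xi^*$ fixed, writing $G^*:=\wti G(\xi^*)$, the critical points of $\psi_2$ over the translation orbit of $\xi^*$ are in bijection with $\mathrm{Crit}(f_{G^*})$, where
$$f_{G^*}(m_1,m_2)=A(m_1^2+m_2^2)+m_1^2\log m_1^2+m_2^2\log m_2^2-8\pi G^* m_1m_2,\qquad (m_1,m_2)\in\R_+^2.$$

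Next I would analyse $f_{G^*}$, a purely two-dimensional problem. It always has the symmetric critical point $m_1=m_2=m_*$, $\log m_*^2=4\pi G^*-A-1$, whose Hessian has eigenvalues $4$ and $4+16\pi G^*$, so $m_*$ is non-degenerate (for $G^*\neq-\tfrac1{4\pi}$), a local minimum for $G^*>-\tfrac1{4\pi}$ and a saddle below. In addition, $f_{G^*}$ carries a pair of asymmetric critical points $(m_1,m_2)$, $(m_2,m_1)$ with $m_1\neq m_2$ exactly when $G^*$ lies in the appropriate regime, which I would identify by writing the $m$-equations as $m_ig(m_i)=4\pi G^*m_{3-i}$ with $g(m)=A+1+2\log m$, reducing them to $h(m_1)=h(m_2)$ and $g(m_1)g(m_2)=(4\pi G^*)^2$ with $h(m)=m^2g(m)$, and solving the resulting scalar equation; this is a bifurcation off the symmetric branch, and the asymmetric critical points so produced are non-degenerate for all $\tau$ outside a finite exceptional set. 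Thus $f_{G^*}$ contributes, over each critical point $\xi^*$ of $\wti G$, either $1$ critical point of $\psi_2$ (when $\wti G(\xi^*)$ is outside that regime) or $3$ (when inside), all of them non-degenerate along the directions transverse to the translation orbit provided $\wti G$ is non-degenerate at $\xi^*$; such a Morse--Bott situation is a stable critical point situation, so I may surround each resulting orbit with a small tubular neighbourhood $\DD_i$.

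It remains to count and to pin down $\tau_0,\tau_1$; this is the crux. Over the three half-periods one obtains $3+2r$ critical configurations of $\psi_2$ (modulo translation), $r$ being the number of half-periods with $\wti G$-value in the active regime, to which one adds the contribution of the exceptional critical points, if present. The quantitative input is the behaviour of $\wti G$ at the three half-periods as $\tau$ varies: these values are explicit combinations of $\theta$-constants, and for elongated tori they are governed by the parabolic profile of the $x$-average of $\wti G$; the computation should show that all three lie in the active regime when $\tau\in(\tau_0,\tau_1)$ — in particular at the square torus $\tau=1$ — giving $9$, while exactly two do so when $\tau\in(0,\tau_0]\cup[\tau_1,+\infty)$, giving $7$. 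Feeding each $\DD_i$ into Theorem~\ref{teo2} produces the families $u_{\la,i}$, and \eqref{uno}--\eqref{due} specialised to $k=2$ are precisely \eqref{jluik2}--\eqref{abst}. The main obstacle is exactly this last step: establishing, by delicate monotonicity-in-$\tau$ estimates for the Green's function (together with a direct treatment of the finitely many $\tau$ where a critical point degenerates, all of which fall in the range giving $7$ families), that the count is never anything other than $7$ or $9$.
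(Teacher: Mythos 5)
Your proposal follows the same route as the paper: reduce via Theorem~\ref{teo2}, use translation invariance to write $\psi_2$ in terms of $z=\xi_1-\xi_2$, locate critical $z$ at the three half-periods (via \cite{CLW}), then solve the $m$-subsystem over each half-period and count by means of the explicit $\theta$-series for $G(p_j)$. The normalisation observation at the start (only $\tau$ matters) is a clean addition not spelled out in the paper.

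The gap, which you yourself flag as the crux, is the identification of the ``active regime'' for the pair of asymmetric $m$-critical points — and I want to warn you that the Hessian computation you perform does \emph{not} settle it and is in fact liable to mislead. From the eigenvalues $4$ and $4+16\pi G^*$ one is tempted to conclude that the asymmetric pair appears precisely when the symmetric critical point $(m_*,m_*)$ turns from a local minimum into a saddle, i.e.\ for $G^*<-\tfrac1{4\pi}$. But a pitchfork can be super- or subcritical, and here one needs the cubic coefficient: writing $\lambda=f_0'(m_*)=1+\tfrac{2}{B}$ with $B=4\pi G^*$, the period-two branch of $f_0\circ f_0$ bifurcates at $|\lambda|=1$ (that is $B=-1$), and the sign of $f_0'''(m_*)/6+(f_0''(m_*)/2)^2=\frac{3-B}{3B^2m_*^2}>0$ shows the asymmetric pair lives on the side $|\lambda|>1$, i.e.\ coexisting with the symmetric \emph{minimum} and consisting of two \emph{saddles} — not replacing a saddle by two minima. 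This is the opposite of the naive picture and must be matched against what is actually used for the count. The paper establishes its own version of this step in Claim~\ref{essmk2} via a graphical analysis of $s=f_0(t)$ against $t=f_0(s)$ and asserts the threshold is $G(z)<0$; you sketch an equivalent reformulation ($h(m_1)=h(m_2)$, $g(m_1)g(m_2)=(4\pi G^*)^2$) but never carry it through, and so the count $3+2r$ remains conditional on a claim you have not proved. Reconciling the threshold obtained from your scalar equation with the one the paper uses in the final enumeration is not a formality.

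Two further points. First, the degenerate case $G(z)=-\tfrac1{4\pi}$ is not excluded from the statement of the theorem, so ``a direct treatment'' has to be supplied, not just alluded to: the paper does this in Claim~\ref{claim3.8} by an argument of a different flavour from the rest of the proof — it performs the finite-dimensional reduction first in $z$ (using nondegeneracy of $G$ at $p_j$), producing a scalar energy $\ml{E}_\la(m)$, and then solves the perturbed $m$-system \eqref{psmk2} directly rather than through the abstract ``stable critical point'' lemma. That step does not follow from Morse--Bott nondegeneracy and has to be written out. Second, your Morse--Bott/tubular-neighbourhood reformulation of the stable critical point hypothesis is fine, but to invoke Theorem~\ref{teo2} you must actually verify that the chosen $\DD_i$ are compactly contained in $\{\xi_i\ne\xi_j\}$ and are pairwise disjoint; the paper finesses this by the nondegeneracy of the full $(z,m)$-Hessian. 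In short: correct skeleton, same method, but the two load-bearing claims — the precise active regime and the degenerate-$G$ case — are left open, and the Hessian heuristic you lean on points in the wrong direction.
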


\medskip

The location of the two bubbling points for the solutions in this result is completely determined. Indeed, fixing $i\in\{1,\dots,7\}$ when $\tau\in(0,\tau_0]\cup[\tau_1,+\infty)$ or $i\in\{1,\dots,9\}$ when $\tau\in ( \tau_0,\tau_1)$ the bubbling points $\xi_{\la,i}^{(1)}$ and $\xi_{\la,i}^{(2)}$ satisfy $\xi_{\la,i}^{(1)}-\xi_{\la,i}^{(2)}\to p_j$ for some $j\in\{1,2,3\}$ as $\la\to 0$ where $p_1$, $p_2$ and $p_3$ are the half periods of $T$:
\begin{equation}\label{halph}
p_1=\dfrac{a}{2} , \quad p_2=\dfrac{\text{i}b}{2}, \quad p_3=\dfrac{a+\text{i}b}{2}.
\end{equation}
\medskip
Moreover, we can show some properties of the weights $m_i$'s. If $0<\tau\le \tau_0$ then
\begin{itemize}
\item  there are two bubbling points $\big(\xi_{\la}^{(1)},\xi_{\la}^{(2)}\big)$ satisfying $\xi_{\la}^{(1)}-\xi_{\la}^{(2)}\to p_1$, and three different pairs of weights $(m_{\la,i}^{(1)},m_{\la,i}^{(2)})$ converging to either $(m_{0},m_{0})$, $(m_{1},m_{2})$ or $(m_2,m_1)$ as $\lambda\to0$ for some $m_0$, $m_1$ and $m_2$ with $m_0\ne m_1$, $m_0\ne m_2$ and $m_1\ne m_2$ such that they give rise to three bubbling solutions;

\item  there are two bubbling points $\big(\xi_{\la}^{(1)},\xi_{\la}^{(2)}\big)$ satisfying $\xi_{\la}^{(1)}-\xi_{\la}^{(2)}\to p_2$, and only a pair of weights $(m_{\la,1},m_{\la,2})$ converging to $(m_{3},m_{3})$ as $\lambda\to0$ for some $m_3$ such that it gives rise to a bubbling solution; and

\item  there are two bubbling points $(\xi_{\la}^{(1)},\xi_{\la}^{(2)})$ satisfying $\xi_{\la}^{(1)}-\xi_{\la}^{(2)}\to p_3$, and three different pairs of weights $(m_{\la,i}^{(1)},m_{\la,i}^{(2)})$ converging to either $(m_{4},m_{4})$, $(m_{5},m_{6})$ or $(m_6,m_5)$ as $\lambda\to0$ for some $m_4$, $m_5$ and $m_6$ with $m_4\ne m_5$, $m_4\ne m_6$ and $m_5\ne m_6$ such that they give rise to three bubbling solutions.
\end{itemize}
If $\tau\ge \tau_1$ then
\begin{itemize}
\item  there are two bubbling points $\big(\xi_{\la}^{(1)},\xi_{\la}^{(2)}\big)$ satisfying $\xi_{\la}^{(1)}-\xi_{\la}^{(2)}\to p_1$, and only a pair of weights $(m_{\la,i}^{(1)},m_{\la,i}^{(2)})$ converging to either $(m_{0},m_{0})$ as $\lambda\to0$ for some $m_0$ such that it gives rise to only a bubbling solution;

\item  there are two bubbling points $\big(\xi_{\la}^{(1)},\xi_{\la}^{(2)}\big)$ satisfying $\xi_{\la}^{(1)}-\xi_{\la}^{(2)}\to p_2$, and three different pairs of weights $(m_{\la,1},m_{\la,2})$ converging to either $(m_1,m_1)$, $(m_2,m_3)$ or $(m_3,m_2)$ as $\la\to 0$ for some $m_1$, $m_2$ and $m_3$ with $m_1\ne m_2$, $m_2\ne m_3$ and $m_1\ne m_3$ such that they give rise to three bubbling solution; and

\item  there are two bubbling points $(\xi_{\la}^{(1)},\xi_{\la}^{(2)})$ satisfying $\xi_{\la}^{(1)}-\xi_{\la}^{(2)}\to p_3$, and three different pairs of weights $(m_{\la,i}^{(1)},m_{\la,i}^{(2)})$ converging to either $(m_{4},m_{4})$, $(m_{5},m_{6})$ or $(m_6,m_5)$ as $\lambda\to0$ for some $m_4$, $m_5$ and $m_6$ with $m_4\ne m_5$, $m_4\ne m_6$ and $m_5\ne m_6$ such that they give rise to three bubbling solutions.
\end{itemize}
If $\tau\in( \tau_0,\tau_1)$ then for every $j=1,2,3$
\begin{itemize}

\item  there are two bubbling points $\big(\xi_{\la,j}^{(1)},\xi_{\la,j}^{(2)}\big)$ satisfying $\xi_{\la,j}^{(1)}-\xi_{\la,j}^{(2)}\to p_j$, and three different pairs of weights $(m_{\la,i,j}^{(1)},m_{\la,i,j}^{(2)})$ converging to either $(m_{0,j},m_{0,j})$, $(m_{1,j},m_{2,j})$ or $(m_{2,j},m_{1,j})$ as $\lambda\to0$ for some differents $m_{0,j}$, $m_{1,j}$ and $m_{2,j}$ such that they give rise to three bubbling solutions.

\end{itemize}

\medskip
 A very interesting situation in the rectangular case is when $a=b$ (or $\tau=1$), namely, in case of a square. Recall that $\tau_0<1<\tau_1$. This fact follows from the analysis in the proof of the previous result, but we highlight it due to the multiplicity of bubbling solutions we obtain: {\it there exist {\bf nine} different families of doubly periodic on $\fr T$ bubbling solutions to problem \eqref{mt} in any square}.

\begin{theo}\label{bubblingsquare}
Assume that $T$ is a square in $\R^2$. Then for any $\la$ small enough and for every half period $p_j$, $j\in\{1,2,3\}$, see \eqref{halph}, there exist bubbling points $\xi_{\la,j}=(\xi_{\la,j}^{(1)},\xi_{\la,j}^{(2)})\in T^2$ and three different pairs of weights $m_{\la,i,j}=(m_{\la,i,j}^{(1)},m_{\la,i,j}^{(2)})\in \R^2_+$, $i=1,2,3$ giving rise to nine bubbling solutions $u_{\la,i,j}$, satisfying \eqref{jluik2} as $\la\to 0$ for $i=1,2,3$ such that, passing to a subsequence, $(\xi_{\la,j},m_{\la,i,j})\to (\xi_{0,j},m_{0,i,j})$, $\xi_{\la,j}^{(1)}-\xi_{\la,j}^{(2)}\to p_j$ and the property \eqref{abst} holds as $\la\to 0$, uniformly on compact subsets of $T\setminus \{\xi_{0,j}^{(1)},\xi_{0,j}^{(2)}\}$, where $\xi_{0,j}=(\xi_{0,j}^{(1)},\xi_{0,j}^{(2)})$.

\end{theo}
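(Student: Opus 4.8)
The plan is to obtain Theorem~\ref{bubblingsquare} from Theorem~\ref{torusk2} by checking that a square corresponds to the value $\tau=1$ lying strictly inside the interval $(\tau_0,\tau_1)$ on which nine families exist, i.e.\ that $\tau_0<1<\tau_1$; once this is known, every quantitative assertion — the expansion \eqref{jluik2}, the profile \eqref{abst}, and the location $\xi_{\la,j}^{(1)}-\xi_{\la,j}^{(2)}\to p_j$ — is exactly what Theorem~\ref{torusk2} (together with the discussion following its statement) yields at $\tau=1$. Recall from the proof of that theorem the underlying mechanism: on a flat torus $H(\xi,\xi)$ is constant and $G(\xi_1,\xi_2)=G(\xi_1-\xi_2)$, so a critical point of $\psi_2$ forces $\xi_1-\xi_2$ to be a critical point of the torus Green's function — one of the three half periods $p_1,p_2,p_3$ in \eqref{halph}, all of them nondegenerate in the rectangular case — coupled with a critical point $m=(m_1,m_2)\in\R^2_+$ of the functional $f_{p}$ obtained from the simplified form of $\psi_2$ by replacing $G(\xi_1,\xi_2)$ with the constant $G(p)$, namely
$$
f_{p}(m)\ =\ A\,(m_1^2+m_2^2)+m_1^2\log m_1^2+m_2^2\log m_2^2-8\pi\,G(p)\,m_1m_2 .
$$
The count of nondegenerate (hence stable) critical points of $f_{p}$ — and therefore, through Theorem~\ref{teo2}, the count of families concentrating along $p$ — is controlled by the sign of $G(p)$: the symmetric critical point $m_1=m_2$ is present for every value of $G(p)$, and exactly when $G(p)<0$ there is in addition a pair of asymmetric critical points $(m_1,m_2),(m_2,m_1)$ bifurcating from it; the thresholds $\tau_0<1<\tau_1$ are the values of $\tau$ at which one of the $G(p_j)$ changes sign. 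It therefore suffices to show that for a square $G(p_j)<0$ for $j=1,2,3$.

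To this end I would first exploit the extra symmetry of the square: the rotation $z\mapsto\mathrm i z$ is an isometry of $T$ fixing $p_3$ and interchanging $p_1$ and $p_2$ modulo the lattice, so $G(p_1)=G(p_2)$ by the isometry invariance of $G$. To compute the values I would use the classical representation of the Green's function of the normalized torus $\C/(\Z+\Z\tau)$,
$$
G(z)\ =\ -\frac1{2\pi}\log\Big|\frac{\vartheta_1(z;\tau)}{\eta(\tau)}\Big|+\frac{(\im z)^2}{2\,\im\tau},
$$
which is invariant under rescaling of the lattice, evaluated at $\tau=\mathrm i$ and at $z=\tfrac12,\ \tfrac{\mathrm i}2,\ \tfrac{1+\mathrm i}2$. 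Using the standard half-period shift relations among the Jacobi theta functions, the Jacobi identity $\vartheta_3(0)^4=\vartheta_2(0)^4+\vartheta_4(0)^4$, the modular transformation (which at the self-dual point $\tau=\mathrm i$ gives $\vartheta_2(0;\mathrm i)=\vartheta_4(0;\mathrm i)$) and the product formula $\vartheta_2(0)\vartheta_3(0)\vartheta_4(0)=2\,\eta^3$, one gets $\vartheta_2(0;\mathrm i)=2^{1/4}\eta(\mathrm i)$ and $\vartheta_3(0;\mathrm i)=2^{1/2}\eta(\mathrm i)$, whence
$$
G(p_1)=G(p_2)=-\frac{\log 2}{8\pi},\qquad G(p_3)=-\frac{\log 2}{4\pi}.
$$
All three are strictly negative, so $\tau=1$ lies in the nine-family regime; since $\tau\mapsto G(p_j)(\tau)$ is continuous this also gives the strict inequalities $\tau_0<1<\tau_1$.

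With $1\in(\tau_0,\tau_1)$ secured, Theorem~\ref{torusk2} applies at $\tau=1$ and provides, for each half period $p_j$, three families: the one whose weights converge to the symmetric critical point $(m_{0,j},m_{0,j})$ of $f_{p_j}$, and the two whose weights converge to the asymmetric critical points $(m_{1,j},m_{2,j})$ and $(m_{2,j},m_{1,j})$. Since the three values $G(p_j)$ are strictly negative — hence strictly past the bifurcation value at which the asymmetric branch is born — all three critical points of each $f_{p_j}$ are nondegenerate, so each yields a stable critical point situation for $\psi_2$ and a genuine family $u_{\la,i,j}$, $i\in\{1,2,3\}$, $j\in\{1,2,3\}$, satisfying \eqref{jluik2}, \eqref{abst} and $\xi_{\la,j}^{(1)}-\xi_{\la,j}^{(2)}\to p_j$: nine in all.

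The main obstacle is the middle step, namely evaluating the three numbers $G(p_j)$ at $\tau=1$ sharply enough to certify that each lies strictly inside the open window in which $f_{p_j}$ carries all three of its nondegenerate critical points, equivalently that $\tau_0<1<\tau_1$. The rotational symmetry of the square halves the work by forcing $G(p_1)=G(p_2)$, after which the theta-function identities above close the gap; the remainder is inherited verbatim from Theorem~\ref{torusk2}.
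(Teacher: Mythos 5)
Your proposal is correct and follows the same overall strategy as the paper: reduce to Theorem~\ref{torusk2} by showing that the square value $\tau=1$ lies in the nine-family window $(\tau_0,\tau_1)$, which amounts to checking that $G(p_j)<0$ for all three half periods $p_j$ when $a=b$. Where you diverge from the paper is only in how this sign condition is certified. The paper takes the explicit series representation of Chen--Oshita, notes that the rotation $z\mapsto\mathrm{i}z$ forces $f_1(1)=f_2(1)$, and then reads off $f_1(1)=f_2(1)\approx -0.03$ and $f_3(1)\approx -0.06$ from the series (the first is immediate from $\tfrac1{12}<\tfrac{\log 2}{2\pi}$ together with positivity of the tail); monotonicity of $f_1,f_2$ then gives $\tau_0<1<\tau_1$. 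You instead evaluate the Green's function at the half periods of the square lattice in closed form via the half-period shift relations, Jacobi's identity, the modular self-duality at $\tau=\mathrm{i}$ (so $\vartheta_2(0;\mathrm{i})=\vartheta_4(0;\mathrm{i})$), and $\vartheta_2\vartheta_3\vartheta_4=2\eta^3$, arriving at the exact values
$$
G(p_1)=G(p_2)=-\frac{\log 2}{8\pi},\qquad G(p_3)=-\frac{\log 2}{4\pi},
$$
which I have checked and are correct. Both routes are rigorous; yours is self-contained and replaces the paper's numerical estimate with closed-form theta-function identities, which is arguably tidier but does require the additional theta machinery. The remainder of your argument — invoking the nondegeneracy of the symmetric and the bifurcated asymmetric critical points of $f_{p_j}$ when $G(p_j)<0$, and concluding via the stable critical point situation of Theorem~\ref{teo2} — is exactly the mechanism underlying Theorem~\ref{torusk2}, so no further work is needed.
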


\medskip
Theorems \ref{torusk2} and \ref{bubblingsquare} follow from the fact that the existence of nondegenerate critical points of $\psi_2$ is a stable critical point situation.  From similar ideas follows Theorem \ref{s2}, studying first critical points of $\xi_1$'s and then the weights $m_i$'s .

\medskip
For the case $k\geq 3$, or the case $k=2$ on a surface $S$ where the function $H(\xi , \xi) $ is not constant, the analysis of the map
 $(\xi, m) \mapsto \psi_k(\xi,m)$ is much harder.

\medskip
We conclude our introduction mentioning
 the link between the theorems \ref{torusk2} - \ref{s2} and the results contained in \cite{dmr1,dmr2} on concentration phenomena for the Liouville-type problem
\begin{equation}\label{ledbc}
\Delta u+\e^2e^{u}=0,\text{in
$\Om$},\quad
u=0,\text{on }\fr\Om,
\end{equation}
where $\Om$ is bounded smooth domain in $\R^2$, see \cite{bp,DeKM,EGP} and references therein. Our results are also connected to those for Liouville-type equations on compact Riemann surfaces, see \cite{CL0,CL,CLW,DEFM, EsFi,F}. The fine blow-up structure for Liouville-type equations on domains in $\R^2$ or on manifolds very close to the bubbling points is similar to that we found in the problems we are discussing in this paper, nevetheless scalings and intermediate regimes are much more subtle for doubly exponential nonlinearities. Even though  the choice of our first approximation to our bubbling solutions is inspired by the discovery of the blow-up shapes which was obtained first in \cite{adistruwe} and then in \cite{AdDru,Dru}, in our problem, more accurate information is needed, due to the role of the distinct weights $m_j$'s, which were discovered  in \cite{dmr1}. In fact, the presence of the weights $m_j$ marks a strong difference between double exponential nonlinearity and  Liouville type nonlinearity.

\medskip
As in the usual Lyapunov-Schmidt scheme, the strategy of the proof involves linearization about a first approximation, to later reduce the problem to a finite dimensional variational one of adjusting the bubbling centers and the corresponding weights. The critical character of this nonlinearity is very much reflected in the delicate error terms left by the first approximation, which makes the linear elliptic theory needed fairly subtle because of the multiple-regime in the error size and adapted to the Riemann surface $S$ through the use isothermal coordinates.

\medskip
The paper is organized as follows: in Section \ref{approx}, we construct a first approximation to a solution to \eqref{mt} with the required properties and we estimate the size of the error of approximation with appropriate norms. In Section \ref{variat} we describe the scheme of our proofs, by stating the principal results we need, and we give the proof of our Theorems. Section \ref{sec4} is devoted to the computation of the expansion of the energy functional on the first approximation we constructed in Section \ref{approx}. Sections \ref{appeA}, \ref{appeB} and \ref{appeC} are devoted to rigorously prove the intermediate results we state in Section \ref{variat}.

\bigskip
\section{Approximation of the solution}\label{approx}
\noindent It is convenient for our purposes to rewrite problem \eqref{mt} by
replacing $u=\sqrt{\la} v$, so that the problem becomes
\begin{equation}\label{mtv}
\Delta_g v+\la\lf(ve^{\la v^2}-{1\over |S|}\int_S ve^{\la v^2}\,dv_g\rg)=0,
\end{equation}
with $\int_Sv\,dv_g=0$. Following \cite{dmr1}, to construct approximating solutions of \eqref{mtv},
the main idea is to use as ``basic cells'' the functions
\begin{equation*}
u_{\delta,\xi}(x)=w_\de (x-\xi) \qquad \de>0,\: \xi\in\R^2,
\end{equation*} where
\begin{equation}
w_{\mu}(y) := \log \frac{8\mu^2}{(\mu^2 + |y|^2)^2},\quad \mu>0.
\label{wmu}\end{equation}
They are all the solutions of
\begin{equation*}
\left\{ \begin{array}{ll}\Delta u+e^{u}=0 &\text{in $\R^2$}\\
\ds \int_{\R^2} e^u <+\infty, & \end{array} \right.
\end{equation*}
and do satisfy the following concentration property:
$$e^{u_{\delta,\xi}}\rightharpoonup 8\pi\delta_\xi
\quad\text{in measure sense}$$ as $\delta \to 0$. We will use now
isothermal coordinates to pull-back $u_{\delta,\xi}$ in $S$ as in \cite{EsFi}.

\medskip \noindent Let us recall that every Riemann surface $(S,g)$ is locally
conformally flat, and the local coordinates in which $g$ is
conformal to the Euclidean metric are referred to as isothermal
coordinates (see for example the simple existence proof provided
by Chern \cite{Chern}). For every $\xi \in S$ it amounts to find a
local chart $y_\xi$, with $y_\xi(\xi)=0$, from a neighborhood of
$\xi$ onto $B_{2r_0}(0)$ (the choice of $r_0$ is independent of
$\xi$) in which $g=e^{\hat \varphi_\xi(y_\xi(x))}dx$, where $\hat
\varphi_\xi \in C^\infty(B_{2r_0}(0),\mathbb{R})$. In particular,
$\hat \varphi_\xi$ relates with the Gaussian curvature $K$ of
$(S,g)$ through the relation:
\begin{equation} \label{equationvarphi}
\Delta \hat \varphi_\xi(y) =-2K(y_\xi^{-1}(y)) e^{\hat
\varphi_\xi(y)} \qquad \hbox{ for }y \in B_{2r_0}(0).
\end{equation}
We can also assume that $y_\xi$, $\hat \varphi_\xi$ depends
smoothly in $\xi$ and that $\hat \varphi_\xi(0)=0$, $\nabla \hat
\varphi_\xi(0)=0$.

\medskip \noindent  We now pull-back $u_{\delta,0}=w_\de$ in $\xi \in S$, for $\delta>0$, by simply
setting
$$U_{\delta,\xi}(x)=w_{\delta}(y_\xi(x))=\log \frac{8\delta^2}{(\delta^2+|y_\xi(x)|^2)^2}$$ for $x
\in y_\xi^{-1}(B_{2r_0}(0))$. Letting $\chi\in
C_0^\infty(B_{2r_0}(0))$ be a radial cut-off function so that
$0\le\chi\le 1$, $\chi\equiv 1$ in $B_{r_0}(0)$, we introduce the
function $PU_{\de,\xi}$ as the unique solution of
\begin{equation}\label{ePu}
\left\{ \begin{array}{ll}
\ds -\Delta_g PU_{\de,\xi} (x)=\chi_\xi(x)
e^{-\varphi_\xi(x)} e^{U_{\de,\xi}(x)}-\frac{1}{|S|}\int_S
\chi_\xi e^{-\varphi_\xi} e^{U_{\de,\xi}} dv_g &\text{in }S\\
\ds \int_S PU_{\de,\xi} dv_g=0,
\end{array}\right.
\end{equation}
where $\chi_\xi(x)=\chi\big( y_\xi(x)\big)$ and $\varphi_\xi(x)=\hat
\varphi_\xi\big(y_\xi(x)\big)$. Notice that the R.H.S. in (\ref{ePu}) has
zero average and smoothly depends in $x$, and then (\ref{ePu}) is
uniquely solvable by a smooth solution $PU_{\de,\xi}$.

\medskip \noindent Let us recall the transformation law for $\Delta_g$ under
conformal changes: if $\tilde g=e^{\varphi} g$, then
\begin{equation} \label{laplacian} \Delta_{\tilde g}=e^{-\varphi} \Delta_g.\end{equation}
Decompose now the Green function $G(x,\xi)$, $\xi \in S$, as
$$G(x,\xi)=-\frac{1}{2\pi} \chi_\xi(x) \log |y_\xi(x)|+H(x,\xi),$$
and by (\ref{green}) then deduce that
\begin{equation*}
\left\{ \begin{array}{ll}
\ds -\Delta_g H= - \frac{1}{2\pi} \lab
\chi_\xi  \,\log |y_\xi(x)| -\frac{1}{\pi}\langle \grad
\chi_\xi,\grad \log
|y_\xi(x)| \rangle_g-\frac{1}{|S|} &\text{in $S$}\\
\ds \int_S H(\cdot,\xi)\, dv_g=\frac{1}{2\pi} \int_S \chi_\xi \log
|y_\xi(\cdot)| dv_g.&
\end{array} \right.
\end{equation*}
We have used that
$$\Delta_g \log |y_\xi(x)|= e^{-\hat \varphi_\xi(y)}
\Delta \log|y| \Big|_{y=y_\xi(x)}=2\pi \delta_\xi$$ in view of
(\ref{laplacian}).

\medskip \noindent For $r\leq 2r_0$ define
$B_r(\xi)=y_\xi^{-1}(B_r(0))$ and $A_{r_1,r_2}(\xi)=B_{r_1}(\xi) \sm
B_{r_2}(\xi)$, for $r_2<r_1\le 2r_0$. Setting $$\Psi_{\de,\xi}(x)= PU_{\de,\xi}(x)-\chi_\xi [U_{\delta,\xi}-\log(8\de^2)]-8\pi H(x,\xi),$$
we have the following asymptotic expansion of $PU_{\de,\xi}$ as
$\delta \to 0$, as shown in \cite{EsFi}:
\begin{lem}\label{ewfxi}
The function $PU_{\delta,\xi}$ satisfies
\begin{equation}\label{eaxi}
PU_{\delta,\xi}=\chi_\xi \lf[U_{\delta,\xi}-\log(8\delta^2)\rg]+
8\pi H(x,\xi)+O(\delta^2|\log
\delta|)
\end{equation}
uniformly in $S$. In particular, there holds
$$PU_{\delta,\xi}=8\pi G(x,\xi)+O(\delta^2|\log \delta|)$$
locally uniformly in $S \sm\{\xi\}$.
\end{lem}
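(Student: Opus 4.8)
The plan is to analyze the function $\Psi_{\de,\xi} := PU_{\de,\xi} - \chi_\xi[U_{\delta,\xi} - \log(8\de^2)] - 8\pi H(\cdot,\xi)$ and to show that $\Psi_{\de,\xi} = O(\delta^2|\log\delta|)$ uniformly in $S$; the two displayed estimates then follow immediately from the definition of $\Psi_{\de,\xi}$ and from the decomposition $8\pi G(x,\xi) = -4\chi_\xi(x)\log|y_\xi(x)| + 8\pi H(x,\xi)$, noting that away from $\xi$ one has $\chi_\xi[U_{\delta,\xi}-\log(8\de^2)] = -4\chi_\xi\log|y_\xi| + \chi_\xi\log\frac{1}{(1+|y_\xi|^2/\delta^2)^2} = -4\chi_\xi\log|y_\xi| + O(\delta^2)$ locally uniformly in $S\sm\{\xi\}$. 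So everything reduces to an estimate on $\Psi_{\de,\xi}$.

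First I would compute $-\Delta_g\Psi_{\de,\xi}$. Using \eqref{ePu} for $-\Delta_g PU_{\de,\xi}$, the defining equation for $H$, and the identity $\Delta_g U_{\de,\xi} = e^{-\varphi_\xi}\Delta w_\delta(y_\xi) = -e^{-\varphi_\xi}e^{w_\delta(y_\xi)} = -e^{-\varphi_\xi}e^{U_{\de,\xi}}$ on the support of $\chi_\xi$ (coming from \eqref{laplacian} and $\Delta w_\mu + e^{w_\mu}=0$), the singular and principal terms cancel. What is left is a right-hand side supported in the annulus $A_{2r_0,r_0}(\xi)$ where $\chi_\xi$ varies: schematically $-\Delta_g\Psi_{\de,\xi} = -\langle\nabla\chi_\xi,\nabla(U_{\de,\xi}-\log(8\de^2))\rangle_g - (\Delta_g\chi_\xi)(U_{\de,\xi}-\log(8\de^2)) + (\text{average corrections}) + (\text{the }-\tfrac{1}{|S|}\text{ terms that already match})$. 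On $A_{2r_0,r_0}(\xi)$ one has $|y_\xi|\ge r_0$, so $U_{\de,\xi}-\log(8\de^2) = -2\log(\delta^2+|y_\xi|^2) = O(|\log\delta|)$ with derivatives $O(1)$ — actually more precisely $U_{\de,\xi}-\log(8\de^2) = -4\log|y_\xi| + O(\delta^2)$, and it is the $O(\delta^2)$-refinement one must track. After subtracting the smooth $H$-contribution (which was designed precisely to absorb the $-4\log|y_\xi|$ piece together with the cut-off commutator terms), the net right-hand side is $O(\delta^2)$ in, say, $L^\infty(S)$, with zero average by construction.

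Then I would invoke elliptic regularity for $\Delta_g$ on the closed surface $S$: the equation $-\Delta_g\Psi_{\de,\xi} = f_\delta$ with $\int_S f_\delta\,dv_g = 0$ and $\int_S\Psi_{\de,\xi}\,dv_g = 0$ has a unique solution satisfying $\|\Psi_{\de,\xi}\|_{L^\infty(S)} \le C\|f_\delta\|_{L^p(S)}$ for $p>1$ (via $W^{2,p}$ estimates and Sobolev embedding in dimension two), hence $\|\Psi_{\de,\xi}\|_{L^\infty(S)} = O(\delta^2)$ — in fact the $|\log\delta|$ loss in the statement is generous and comes only from the crude bound on the annular terms. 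The main obstacle, and the only genuinely delicate point, is the bookkeeping of the cut-off commutator terms $\langle\nabla\chi_\xi,\nabla\log|y_\xi|\rangle_g$ and $(\Delta_g\chi_\xi)\log|y_\xi|$ appearing both in $-\Delta_g PU_{\de,\xi}$ (through $U_{\de,\xi}$) and in $-\Delta_g H$: one must verify they cancel exactly, leaving only genuinely $O(\delta^2)$ remainders, and check that the $-\tfrac{1}{|S|}\int_S\chi_\xi e^{-\varphi_\xi}e^{U_{\de,\xi}}$ average term contributes $O(\delta^2|\log\delta|)$ (since $\int_S\chi_\xi e^{-\varphi_\xi}e^{U_{\de,\xi}}\,dv_g = 8\pi + O(\delta^2|\log\delta|)$, essentially the mass concentration property). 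Since this is exactly the expansion carried out in \cite{EsFi}, I would present the cancellations explicitly and then quote the elliptic estimate.
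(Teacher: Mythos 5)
Your overall strategy is the right one and presumably the same as in \cite{EsFi}: set $\Psi_{\de,\xi} = PU_{\de,\xi} - \chi_\xi[U_{\delta,\xi}-\log(8\de^2)] - 8\pi H(\cdot,\xi)$, compute $-\Delta_g\Psi_{\de,\xi}$ using \eqref{ePu}, the defining equation for $H$, and $\Delta_g U_{\de,\xi} = -e^{-\varphi_\xi}e^{U_{\de,\xi}}$ (from \eqref{laplacian} and $\Delta w_\delta+e^{w_\delta}=0$), observe the cancellations, and close with an $L^\infty$ elliptic estimate on the compact surface.

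However, there is a concrete error in your bookkeeping that, while it does not invalidate the final bound, misidentifies the source of the $|\log\delta|$ factor. You assert $\int_S\Psi_{\de,\xi}\,dv_g=0$ and apply the elliptic estimate directly; this is false. Indeed $\int_S PU_{\de,\xi}\,dv_g = 0$ and, by the normalization of $H$, $8\pi\int_S H(\cdot,\xi)\,dv_g = 4\int_S\chi_\xi\log|y_\xi|\,dv_g$; but
$$\int_S \chi_\xi\bigl[U_{\de,\xi}-\log(8\de^2)\bigr]\,dv_g = -4\int_S\chi_\xi\log|y_\xi|\,dv_g - 2\int_S\chi_\xi\log\Bigl(1+\tfrac{\de^2}{|y_\xi|^2}\Bigr)\,dv_g,$$
so that
$$\int_S\Psi_{\de,\xi}\,dv_g = 2\int_S\chi_\xi\log\Bigl(1+\tfrac{\de^2}{|y_\xi|^2}\Bigr)\,dv_g = O(\de^2|\log\de|),$$
and this $\de^2|\log\de|$ is sharp. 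In contrast, all the other pieces you mention are genuinely $O(\de^2)$ without any logarithm: the cut-off commutator terms in $-\Delta_g\Psi_{\de,\xi}$ live where $|y_\xi|\ge r_0$ so are $O(\de^2)$ pointwise, and the mass term satisfies $\int_S\chi_\xi e^{-\varphi_\xi}e^{U_{\de,\xi}}\,dv_g = 8\pi + O(\de^2)$, not $O(\de^2|\log\de|)$ as you wrote --- this is just $\int_{B_{2r_0}}\chi\,e^{w_\de}\,dy$, which differs from $8\pi$ only by the tail $\int_{|y|\ge r_0}\frac{8\de^2}{(\de^2+|y|^2)^2}\,dy = O(\de^2)$ (and coincides with Lemma \ref{ieuf} applied with $\bar f\equiv 1$, $\Delta_g\bar f=0$). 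Consequently the correct structure is: $-\Delta_g\Psi_{\de,\xi}=O(\de^2)$ in $L^\infty$, hence the zero-mean part $\Psi_{\de,\xi} - \frac{1}{|S|}\int_S\Psi_{\de,\xi}\,dv_g$ is $O(\de^2)$ in $L^\infty$ by elliptic regularity, and the $|\log\de|$ loss in the Lemma enters solely through the non-vanishing average of $\Psi_{\de,\xi}$. Your claim that $\|\Psi_{\de,\xi}\|_{L^\infty}=O(\de^2)$ and that the stated bound is ``generous'' is therefore incorrect; and the step ``$\int_S\Psi_{\de,\xi}\,dv_g=0$'' needs to be replaced by the $O(\de^2|\log\de|)$ estimate above before the elliptic estimate can be invoked.
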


\noindent The ansatz will be constructed as follows. Given $k \in
\mathbb{N}$, let us consider distinct points $\xi_j \in S$, $m_j>0$ and
$\de_j=\mu_j\e_j>0$, $j=1,\dots,k$. In order to have a good approximation, we will assume that the parameters $\mu_j$'s and $\e_j$'s are given by
\begin{equation}
\log (8\mu_j^2) =
    -2\log (2m_j^2)  + 8\pi H(\xi_j,\xi_j) + 8\pi\sum_{i=1,i\ne j}^k m_im_j^{-1} G(\xi_i,\xi_j),\quad\text{for all }j=1,\dots,k
\label{muj}\end{equation}
and
\begin{equation}\label{e}
\log{1\over \e^4_j}={1\over 2\la m_j^2}-2\log(2m_j^2),\quad\text{for all }j=1,\dots,k.
\end{equation}
Up to take $r_0$ smaller, we
assume that the points $\xi_j$'s are well separated and $m_j's$
are in a compact subset of $(0,+\infty)$, namely, we choose
$\xi=(\xi_1,\dots,\xi_k)\in\Xi$ and $m=(m_1,\dots,m_k)\in \ml{M}$,
where
\begin{equation*}
\Xi=\{(\xi_1,\dots,\xi_k) \in S^k \mid d_g(\xi_i,\xi_j)\geq 4r_0
\:\:\forall\:i,j=1,\dots,k,\:i\not=j\}\quad\text{and}\quad\ml{M}=\Big[\de_0,{1\over
\de_0}\Big]^k,
\end{equation*}
for some small fixed constant $\de_0>0$. Denote $U_j:= U_{\mu_j\e_j,\xi_j}$,
$j=1,\dots,k$. Thus, our approximating solution is
\begin{equation}\label{ansatz}
V(x)= \sum_{j=1}^k m_jPU_j(x),\quad x\in S,
\end{equation}
where $P$ is the projection operator defined by \eqref{ePu}. Notice that $\la\to 0$ if and only if $\e_j\to0$ for each $j=1,\dots,k$. The idea is that the choice of the numbers $\mu_j,\e_j$ makes the error of approximation for $V$ small around each point $\xi_j$. Let us estimate the error which by definition is
\begin{equation}
\label{erre} R\,  = \, \Delta_g V +  \la\lf(Ve^{\la V^2}-{1\over
|S|}\int_SVe^{\la V^2}\,dv_g\rg).
\end{equation}
Setting $w_j(x)=\ds w_{\mu_j}\Big({y_{\xi_j}(x)\over\e_j}\Big)$ for $x\in B_{2r_0}(\xi_j)$, introduce the following $L^\infty$-weighted norm for bounded functions defined
in $S$
\begin{equation}
\label{starnorm} \| h\|_* = \sup_{x \in S } \rho (x)^{-1} |h(x)|,
\end{equation}
where
$$
\rho (x):=  \sum_{j=1}^k  \chi_{ B_{r_0}(\xi_j) }
(x) \rho_j(x) + 1,$$
with
\begin{equation}\label{roj}
\begin{split}
\rho_j(x)=&\,\chi\Big({r_0|y_{\xi_j}(x)|\over \delta\e_j|\log\e_j|^2}\Big)(1+|w_j|+w_j^2)\e_j^{-2} e^{w_j(x)}\\
&+\Big[1-\chi\Big({2r_0|y_{\xi_j}(x)|\over \delta\e_j|\log\e_j|^2}\Big)\Big]\Big[\{1+|\log|y_{\xi_j}(x)|\}e^{\la m_j^2w_j^2(x)}+\la^{-1}\Big]\e_j^{-2} e^{w_j(x)},
\end{split}
\end{equation}
$\de>0$ a large fixed constant and $\chi_A$
is the characteristic function of the set $A$.
Thus, we have proven the following fact.
\begin{lem}\label{estrr0}
Assume \eqref{muj}-\equ{e}. There exists a constant $C>0$,
independent of $\la>0$ small, \st
\begin{equation}\label{re}
\|R\|_*\le  C \la
\end{equation}
for all $\xi \in \Xi$, and $m\in \ml{M}$.
\end{lem}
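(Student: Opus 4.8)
The plan is to estimate $R$ region by region in the weighted $\|\cdot\|_*$--norm, localising through the isothermal charts $y_{\xi_j}$ so that each piece reduces to a computation in $\R^2$. First I would use the equation $\equ{ePu}$ defining the projections $PU_j$ together with $\Delta_g V=\sum_j m_j\Delta_g PU_j$ to write
$$
R \;=\; -\sum_{j=1}^k m_j\,\chi_{\xi_j}\,e^{-\varphi_{\xi_j}}\,e^{U_j}\;+\;\la\,V e^{\la V^2}\;+\;c_\la,
$$
where $c_\la:=\sum_{j}\frac{m_j}{|S|}\int_S\chi_{\xi_j}e^{-\varphi_{\xi_j}}e^{U_j}\,dv_g-\frac{\la}{|S|}\int_S Ve^{\la V^2}\,dv_g$ is a constant. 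The change of variables $x\mapsto y_{\xi_j}(x)$ sends $e^{-\varphi_{\xi_j}}e^{U_j}\,dv_g$ to $e^{w_{\mu_j\e_j}}dy$ and $\int_{\R^2}e^{w_\mu}=8\pi$, so the first sum in $c_\la$ is $\frac{8\pi}{|S|}\sum_j m_j+O(\e^2)$; together with the mass expansion $\la\int_S Ve^{\la V^2}\,dv_g=8\pi\sum_j m_j+O(\la)$ — obtained by integrating the pointwise bounds below against the leading term $\int m_j\e_j^{-2}e^{w_j}\,dv_g=8\pi m_j$ — this yields $|c_\la|\le C\la$. Since $\rho\ge1$ on $S$, the constant $c_\la$ contributes only $O(\la)$ to $\|R\|_*$, and it remains to control the two nonconstant terms.

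The algebraic input is the shape of $V$ near $\xi_j$ forced by $\equ{muj}$--$\equ{e}$. Using Lemma $\ref{ewfxi}$, the calibrations $\equ{muj}$--$\equ{e}$, and $\chi_{\xi_j}\equiv1$ on $B_{r_0}(\xi_j)$, one gets on $B_{r_0}(\xi_j)$
$$
V=s_j+\theta_j,\qquad s_j:=m_j w_j+\tfrac{1}{2\la m_j},\qquad \theta_j=O\big(|y_{\xi_j}(x)|\big)+O\big(\e_j^2|\log\e_j|\big),
$$
as well as the identities $\e_j^{-2}=\tfrac{1}{2m_j^2}e^{1/(4\la m_j^2)}$, $\;1+2\la m_j^2 w_j=2\la m_j s_j$, $\;e^{\la s_j^2}=2m_j^2\,\e_j^{-2}e^{w_j}e^{\la m_j^2 w_j^2}$, and — reading off $w_j$ from $\equ{wmu}$ — the relation $1+2\la m_j^2 w_j=-8\la m_j^2\log|y_{\xi_j}(x)|+O(\la)$ on $B_{r_0}(\xi_j)$. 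Also $\la V^2=\la s_j^2+O(r_0)$ on $B_{r_0}(\xi_j)$, the error improving to $O(\e_j|\log\e_j|^2)$ on the inner region $\{|y_{\xi_j}(x)|\le\de\e_j|\log\e_j|^2\}$.

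Next I would split $S$ into: (i) the exterior $S\sm\bigcup_j B_{r_0}(\xi_j)$, where $\rho\equiv1$; (ii) for each $j$, the inner region $\{|y_{\xi_j}(x)|\le\de\e_j|\log\e_j|^2\}$; (iii) for each $j$, the intermediate region $\{\de\e_j|\log\e_j|^2\le|y_{\xi_j}(x)|\le r_0\}$. In (i) we have $V=\sum_i 8\pi m_i G(\cdot,\xi_i)+O(\e^2|\log\e|)=O(1)$, so $\la Ve^{\la V^2}=O(\la)$, while the bubble terms are supported in the annuli $\{r_0\le|y_{\xi_j}(x)|\le 2r_0\}$ and there $e^{U_j}\le 8(\mu_j\e_j)^2/r_0^4=O(\e_j^2)$; with $|c_\la|\le C\la$ and $\rho=1$ this gives $|R|\le C\la$. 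In (iii) no cancellation is needed: by $\la s_j e^{\la s_j^2}=m_j(1+2\la m_j^2 w_j)\,\e_j^{-2}e^{w_j}e^{\la m_j^2 w_j^2}$ and $|1+2\la m_j^2 w_j|\le C\la\big(1+|\log|y_{\xi_j}(x)||\big)$, and using $e^{\la s_j^2}=2m_j^2\e_j^{-2}e^{w_j}e^{\la m_j^2 w_j^2}$ to estimate $\la|\theta_j|e^{\la s_j^2}=\la\,O(|y_{\xi_j}|)\cdot 2m_j^2\,\e_j^{-2}e^{w_j}e^{\la m_j^2 w_j^2}$, together with $e^{\la V^2}=e^{\la s_j^2}(1+O(r_0))$, one obtains $\la|Ve^{\la V^2}|\le C\la\big(1+|\log|y_{\xi_j}(x)||\big)\e_j^{-2}e^{w_j}e^{\la m_j^2 w_j^2}\le C\la\rho_j$ via the $\big(1+|\log|y||\big)e^{\la m_j^2 w_j^2}$--part of $\rho_j$; likewise $m_j\chi_{\xi_j}e^{-\varphi_{\xi_j}}e^{U_j}\le C\,\e_j^{-2}e^{w_j}=C\la\cdot\la^{-1}\e_j^{-2}e^{w_j}\le C\la\rho_j$ via the $\la^{-1}$--part; and $|c_\la|\le C\la\le C\la\rho_j$ since $\rho_j\ge e^{\la m_j^2 w_j^2}\e_j^{-2}e^{w_j}=\tfrac{1}{2m_j^2}e^{\la s_j^2}\ge c_0>0$ on (iii). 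In (ii), where $|w_j|\le C\log|\log\e_j|$ so $\la m_j^2 w_j^2=o(1)$, I would Taylor-expand: using $e^{\la V^2}=e^{\la s_j^2}(1+O(\e_j|\log\e_j|^2))$, $\la s_j e^{\la s_j^2}=m_j(1+2\la m_j^2 w_j)\,\e_j^{-2}e^{w_j}e^{\la m_j^2 w_j^2}$, and $e^{-\varphi_{\xi_j}}=1+O(|y_{\xi_j}|^2)$, one finds
$$
R=m_j\,\e_j^{-2}e^{w_j}\Big[(1+2\la m_j^2 w_j)e^{\la m_j^2 w_j^2}-1\Big]+O\big(\la\,\e_j^{-2}e^{w_j}\big)+c_\la,
$$
and $(1+2\la m_j^2 w_j)e^{\la m_j^2 w_j^2}-1=O\big(\la(|w_j|+w_j^2)\big)$ since $\la w_j^2$ is small; hence $|R|\le C\la(1+|w_j|+w_j^2)\,\e_j^{-2}e^{w_j}+C\la\le C\la\rho_j$, because $\rho_j\ge(1+|w_j|+w_j^2)\,\e_j^{-2}e^{w_j}\ge1$ on (ii) for $\la$ small. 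Collecting the three cases and using $\rho\ge\rho_j$ on $B_{r_0}(\xi_j)$ gives $\|R\|_*\le C\la$.

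The main obstacle is region (iii). There $w_j$ is as negative as $\sim 4\log\e_j\sim -(2\la m_j^2)^{-1}$, so $e^{\la m_j^2 w_j^2}$ swells to size $e^{1/(4\la m_j^2)}$, of the same super-exponential order as $\e_j^{-2}$; it is only the exact calibration $\equ{muj}$--$\equ{e}$ — which yields $1+2\la m_j^2 w_j=-8\la m_j^2\log|y_{\xi_j}|+O(\la)$ and $e^{\la s_j^2}=2m_j^2\,\e_j^{-2}e^{w_j}e^{\la m_j^2 w_j^2}$ — that makes the prefactor $(1+2\la m_j^2 w_j)$ small enough, of size $O(\la(1+|\log|y_{\xi_j}||))$, to beat this growth, which is precisely what the weight $\rho_j$ of $\equ{roj}$ is engineered to absorb. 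A secondary technical point is to pin down the $O(\la)$ remainder in $\la\int_S Ve^{\la V^2}\,dv_g=8\pi\sum_j m_j+O(\la)$ needed for $|c_\la|\le C\la$; this again follows by integrating the region-(iii) bounds.
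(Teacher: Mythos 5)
Your proof is correct and follows essentially the same route as the paper: the same decomposition $V=m_j\bigl(w_j+\tfrac{1}{2\lambda m_j^2}+\theta_j\bigr)$ near $\xi_j$ driven by the calibrations \eqref{muj}--\eqref{e}, the same splitting of $S$ into the exterior, the inner ball $\{|y_{\xi_j}|\le\delta\e_j|\log\e_j|^2\}$, and the intermediate annulus, the same key identity $1+2\lambda m_j^2 w_j=-8\lambda m_j^2\log|y_{\xi_j}|+O(\lambda)$ in the annulus, and the same matching of each resulting piece against the corresponding part of the weight $\rho_j$ in \eqref{roj}. The only cosmetic difference is that you isolate the constant $c_\lambda$ at the outset, whereas the paper absorbs its $O(\lambda)$ contribution into the pointwise estimate of $R$ after computing $\Delta_g V$ directly.
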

\begin{proof}[\dem]First, notice that for $x\in B_{2r_0}(\xi_j)$ $\ds U_{\mu\e,\xi}(x)-\log(8\mu^2\e^2)=w_\mu\Big({y_\xi(x)\over\e}\Big)-\log(8\mu^2)+\log{1\over\e^4}.$ By \eqref{muj}-\eqref{e} we find that in $B_{r_0}(\xi_j)$
\begin{equation}\label{ttu}
V(x) =  m_j\lf[ w_j(x)  + {1\over 2\la m_j^2} +
\theta_j(x)\rg]
\end{equation}
where
\begin{equation}\label{te}
\begin{split}
\theta_j(x) =&\:8\pi\Big\langle \grad(H(\cdot,\xi_j)\circ
y_{\xi_j}^{-1})(0)+\sum_{i\ne
j}m_im_j^{-1}\grad(G(\cdot,\xi_i)\circ
y_{\xi_j}^{-1})(0),y_{\xi_j}(x)\Big\rangle\\
&+ O(|y_{\xi_j}(x)|^2) + \sum_{i=1}^k O(\e_i^2|\log\e_i|).
\end{split}
\end{equation}
Hence, we obtain that in
$B_{r_0}(\xi_j)$
\begin{equation}\label{lvlv2}
\la V =  \frac 1{2m_j} + \la m_j( w_j +\theta_j)\qquad\text{and}\qquad
\la V^2 =  w_j+\theta_j+\la m_j^2(w_j+\theta_j)^2+{1\over 4\la m^2_j}.
\end{equation}
Thus, from \eqref{lvlv2} we have that in $B_{r_0}(\xi_j)$
\begin{equation}\label{fv}
\begin{split}
\la Ve^{\la V^2}&={1\over 2m_j}e^{1/(4\la m_j^2)}[1+2\la m_j^2(w_j+\theta_j)]e^{w_j+\theta_j+\la m_j^2(w_j+\theta_j)^2}\\
&=m_j[1+2\la
m_j^2(w_j+\theta_j)]\e_j^{-2}e^{w_j+\theta_j+\la m_j^2(w_j+\theta_j)^2},
\end{split}
\end{equation}
in view of $\ds{1\over 4\la m_j^2}=\log{2m_j^2\over\e_j^2}$.
Furthermore, in $S\sm \cup_{j=1}^kB_{r_0}(\xi_j)$ we have that
\begin{equation}\label{vo}
V(x)=\sum_{j=1}^km_j\lf[8\pi
G(x,\xi_j)+O(\e_j^2|\log\e_j|)\rg]=O(1), \end{equation} so that
$\la Ve^{\la V^2}=O(\la)$ in $S\sm \cup_{j=1}^kB_{r_0}(\xi_j)$.

\medskip
On the other hand, from the definition of $V$ it is readily
checked that
\begin{equation}\label{lav}
\Delta_g V=-\sum_{j=1}^k m_j \chi_j e^{-\varphi_j}\e_j^{-2}e^{w_j}
+ {1\over|S|}\sum_{j=1}^k m_j[8\pi+O(\e_j^2)],
\end{equation}
where $\chi_j=\chi_{\xi_j}$, $\varphi_j=\varphi_{\xi_j}$ for
$j=1,\dots,k$ and in view of $e^{U_j}=\e_j^{-2}e^{w_j}$ and
\begin{equation*}
\int_S \chi_j e^{-\varphi_j} \e_j^{-2}e^{w_j}
dv_g=\int_{B_{r_0}(0)} {8 \mu_j^2\e_j^2\over (\mu_j^2\e_j^2 +
|y|^2 )^2} dy+O(\mu_j^2\e_j^2)=8\pi + O(\e_j^2). \end{equation*}

\medskip
Now, let us estimate the integral term. By using \eqref{vo} we find that
\begin{equation}\label{ivev20}
\la\int_SVe^{\la V^2}=\sum_{j=1}^k\int_{B_{r_0}(\xi_j)}\la Ve^{\la V^2}+ O(\la)
\end{equation}
Now, we write as follows for $\de>0$ large enough and fixed (the same as in the definition of $\rho_j$ in \eqref{roj})
$$\la\int_{B_{r_0}(\xi_j)}Ve^{\la V^2}dv_g=\bigg[\int_{A_{r_0,\de\sqrt{\e_j}}(\xi_j)}+\int_{A_{\de\sqrt{\e_j},\de\e_j|\log\e_j|}(\xi_j)}+\int_{B_{\de\e_j|\log\e_j|}(\xi_j)}\bigg]\la Ve^{\la V^2}dv_g.$$
In $A_{r_0,\de\sqrt{\e_j}}(\xi_j)$, we have that uniformly $V(x)=-4m_j\log|y_{\xi_j}(x)|+O(1),$
in view of the expansion in $S\sm\cup_{j=1}^kB_{\de\sqrt{\e_j}}(\xi_j)$
$$PU_j(x)=-4\chi_j(x)\log|y_{\xi_j}(x)| + 8\pi H(x,\xi_j) + \chi_j(x)\log\Big(1+{\mu_j^2\e_j^2\over |y_{\xi_j}(x)|^2}\Big)+O(\e_j^2|\log\e_j|),$$
and for $i\ne j$ and $x\in A_{r_0,\de\sqrt{\e_j}}(\xi_j)$,
$PU_i(x)=8\pi G(x,\xi_i)+O(\e_i^2|\log\e_i|)=O(1).$
Hence, we find that
\begin{equation*}
\begin{split}
\int_{A_{r_0,\de\sqrt{\e_j}}(\xi_j)} Ve^{\la V^2}\,dv_g&=m_j\int_{A_{r_0,\de\sqrt{\e_j}}(\xi_j)}[-4\log|y_{\xi_j}(x)|+O(1)]e^{\la m_j^2[16\log^2|y_{\xi_j}(x)|+O(|\log|y_{\xi_j}(x)||)]}dv_g\\
&=m_j\int_{B_{r_0}(0)\sm B_{\de\sqrt{\e_j}}(0)}[-4\log|y|+O(1)]e^{\la m_j^2[16\log^2|y|+O(|\log|y||)]}e^{\hat \varphi_j(y)} dy\\
&=O\lf(\int_{B_{r_0}(0)\sm B_{\de\sqrt{\e_j}}(0)}\big|\log|y|\big|\,e^{16\la m_j^2\log^2|y|}dy\rg)=O(1),
\end{split}
\end{equation*}
in view of $y=y_{\xi_j}(x)$, $e^{\hat\varphi_j}=O(1)$, $\la m_j^2|\log|y||=O(1)$ in the considered region and
\begin{equation*}
\begin{split}
\int_{B_{r_0}(0)\sm B_{\de\sqrt{\e_j}}(0)}\big|\log|y|\big|e^{16\la m_j^2\log^2|y|}dy&=2\pi \int_{\de\sqrt{\e_j}}^{r_0} |\log s|e^{16\la m_j^2\log^2 s}s\,ds\\
&=2\pi\int_{\log(\de\sqrt{\e_j})}^{\log r_0}|t|e^{2t+16\la m_j^2t^2}\,dt\quad(t=\log s)\\
&=O\lf(\int_{\log(\de\sqrt{\e_j})}^{\log r_0}|t|e^{t}\,dt\rg)=O(1),
\end{split}
\end{equation*}
since for $r_0<1$ (if necessary), ${1\over2}\log\e_j+\log\de\le t\le\log r_0<0$ and \eqref{e} implies that
$$(16\la m_j^2 t+2)t\le(1+4\la m_j^2\log(2m_j^2)+16\la m_j^2\log\de)t\le t+\alpha\qquad\text{for some constant }\alpha.$$
Now, we get that $\de\e_j|\log\e_j|\le|y_{\xi_j}(x)|\le\de\sqrt{\e_j}$ implies that
$$2\la\log\e_j+\la\log{8\mu_j^2\over (\mu_j^2\e_j^2+\de^2)^2}\le\la w_j(x)\le -4\la\log|\log\e_j|+\la\log{8\mu_j^2\over ({\mu_j^2\over |\log\e_j|^2}+\de^2)^2}<0,$$
for $\la$ small enough, so that, we find that $\la w_j=O(1)$ uniformly in $A_{\de\sqrt{\e_j},\de\e_j|\log\e_j|}(\xi_j)$. Furthermore, it follows that
$$w_j(1+\la m_j^2w_j)\le w_j\lf(1+2\la m_j^2\log\e_j+\la\log{8\mu_j^2\over (\mu_j^2\e_j^2+\de^2)^2}\rg)\le {3\over4}w_j+\beta,$$ for some  constant $\beta$
in $A_{\de\sqrt{\e_j},\de\e_j|\log\e_j|}$, in view of $2\la m_j^2\log\e_j=\la m_j^2\log(2m_j^2)-{1\over 4}$. Hence, by using \eqref{fv}, $\theta_j=O(1)$ and scaling $\e_jz=y_{\xi_j}(x)$, we obtain that
\begin{equation*}
\begin{split}
\int_{A_{\de\sqrt{\e_j},\de\e_j|\log\e_j|}}\la Ve^{\la V^2}&=\,O\bigg(\int_{A_{\de\sqrt{\e_j},\de\e_j|\log\e_j|}}\e_j^{-2}e^{w_j+\la m_j^2w_j^2}\bigg)=O\bigg(\int_{A_{\de\sqrt{\e_j},\de\e_j|\log\e_j|}}\e_j^{-2}e^{{3\over4}w_j}\bigg)\\
&=\,O\bigg(\int_{B_{\de/\sqrt{\e_j}}(0)\sm B_{\de |\log\e_j|}(0)}\exp\Big({3\over4}\log{8\mu_j^2\over (\mu_j^2+|z|^2)^2}\Big)\,dz\bigg)\\
&=\,O\bigg(\int_{\de|\log\e_j|}^{\de/\sqrt{\e_j}}\Big({8\mu_j^2\over(\mu_j^2+s^2)^2}\Big)^{3/4}s\,ds\bigg)=O\bigg(\int_{\de|\log\e_j|}^{\de/\sqrt{\e_j}}{ds\over s^2}\bigg)=O(\la).
\end{split}
\end{equation*}
In the ball $B_{\de \e_j|\log\e_j|}(\xi_j)$, we have that
\begin{equation}\label{ivev2}
\begin{split}
\la\int_{B_{\de \e_j|\log\e_j|}(\xi_j)}&Ve^{\la V^2}=\int_{B_{\de\e_j|\log\e_j|}(\xi_j)}m_j[1+2\la m_j^2(w_j+\theta_j)]\e_j^{-2}e^{w_j+\theta_j+\la m_j^2(w_j+\theta_j)^2}\\
&=m_j\bigg[\int_{B_{\de\e_j|\log\e_j|}(\xi_j)} \e_j^{-2}e^{w_j + \theta_j}(1+O(\la w_j^2+\la |w_j|+\la))\,dv_g\\
&\qquad +2\la m_j^2\int_{B_{\de\e_j|\log\e_j|}(\xi_j)} \e_j^{-2}e^{w_j} O\lf([1+|w_j|][1+\la w_j^2+\la|w_j|]\rg)\,dv_g\bigg] \\
&=m_j[8\pi+O(\la)]
\end{split}
\end{equation}
in view of $\theta_j=O(1)$, $\la(w_j+w_j^2)=O(1)$,
$$\int_{B_{\de\e_j|\log\e_j|}(\xi_j)}\e_j^{-2}e^{w_j+\theta_j}=8\pi +O\bigg(\la+\sum_{j=1}^k\e_j^2|\log\e_j|\bigg)$$
by using \eqref{te} (and similar expansion for $e^{\theta_j}$), scaling $\e_jz=y_{\xi_j}(x)$ so that $\e_j^2 dy=e^{-\varphi_j(x)} dv_g$ and
$$\int_{B_{\de\e_j|\log\e_j|}(\xi_j)}\e_j^{-2}e^{w_j}(|w_j|+w_j^2+|w_j|^3)
dv_g=O(1),$$ since $0\le|y_{\xi_j}(x)|\le\de\e_j|\log\e_j|$ implies that
$$-4\log|\log\e_j|+\log{8\mu_j^2\over(\mu_j^2|\log\e_j|^{-2}+\de^2)^2}\le w_j(x)\le \log{8\over\mu_j^2},$$
namely, $w_j=O(|\log\la|)$ in $B_{\de\e_j|\log\e_j|}(\xi_j)$.
Therefore, we conclude that
\begin{equation}\label{isvev2}
\la \int_S Ve^{\la V^2}\,dv_g=8\pi \sum_{j=1}^km_j+O(\la).
\end{equation}
Notice that from \eqref{e}, we find that
$\e_j^2|\log\e_j|=O(\la)$, for all $j=1,\dots,k$.

\medskip

From \eqref{lav}, \eqref{fv}, \eqref{vo} and \eqref{ivev2} it
follows that
\begin{itemize}
\item in $S \setminus \cup_{j=1}^m B_{r_0}(\xi_j)$ there holds $R=O(\la)$;
\item in $B_{r_0}(\xi_j)$, $j\in\{1,\dots,k\}$, there holds
$$
R = m_j \e_j^{-2} e^{w_j} \left(\lf[1+ 2\la
m_j^2(w_j+\theta_j)\rg] e^{\la
m_j^2(w_j+\theta_j)^2+\theta_j}-e^{-\varphi_j}
 \right ) + O(\la).
$$
\end{itemize}
Observe that for $x\in B_{\e_j r_0}(\xi_j)$ we have that $ R=O(\la
\e_j^{-2} e^{w_j}+\la)$, since $w_j=O(1)$ uniformly in $B_{\e_j
r_0}(\xi_j)$. Moreover, $w_j=O(|\log\la|)$ in $B_{\de\e_j|\log\e_j|^2}(\xi_j)$ and hence,
$$
R =\e_j^{-2} e^{w_j} O\left(\la|w_j|+\la w_j^2+\sum_{j=1}^k\e_j|\log\e_j|^2
\right ) + O(\la)
$$ in view of $\la(w_j+\theta_j)^2=O(\la|\log\la|^2)$ and $\theta_j(x)=O(|y_{\xi_j}(x)|+\sum_{j=1}^k\e_j|\log\e_j|^2)$. Furthermore, from the choice of $\e_j$,
$j=1,\dots,k$ \eqref{e} it follows that
\begin{equation*}
\begin{split}
R =&\, m_j \e_j^{-2} e^{w_j} \Big(\lf[-4\log(\mu_j^2\e_j^2+|y_{\xi_j}(x)|^2)+4\log(2m_j^2)+2\log(8\mu_j^2)+2\theta_j\rg]\\
&\qquad\qquad\qquad\times \la m_j^2 e^{\la
m_j^2 w_j^2+\la m_j^2\theta_j^2+\theta_j(1+2\la m_j^2w_j)}-e^{-\varphi_j}  \Big) + O(\la)\\
=&\,\la O\lf(\lf[\big|\log|y_{\xi_j}(x)|\big|+1\rg]\e_j^{-2}e^{w_j+\la m^2_jw_j^2}+ \la^{-1}\e_j^{-2}e^{w_j}\rg)+ O(\la),
\end{split}
\end{equation*}
in $A_{r_0,\de \e_j|\log\e_j|^2}(\xi_j)$, in view of $\log(\mu_j^2\e_j^2+|y_{\xi_j}(x)|^2)=2\log|y_{\xi_j}(x)|+O(1)$. In particular, $w_j+{1\over 2\la m_j^2}=-4\log|y_{\xi_j}(x)|+O(1)$, so that, in $A_{r_0,\de\sqrt{\e_j}}(\xi_j)$ it holds $\e_j^{-2}e^{w_j+\la m_j^2w_j^2}=O(e^{16\la m_j^2\log^2|y_{\xi_j}(x)|})$, in view of $\la w_j=O(1)$ uniformly
in $B_{2r_0}(\xi_j)$. Hence, the error of approximation satisfies
the global bound
$$
|R(x)| \le    C\la \rho(x).
$$
This completes the proof.
\end{proof}


\medskip
For simplicity, here and in what follows $f$ designates the
nonlinearity
\begin{equation}
\label{effe} f(v) = \la v e^{\la v^2}.
\end{equation}
Now, we will look for a solution $v$ of \eqref{mtv} in the form
$v=V+\phi$, for some small remainder term $\phi$. In terms of
$\phi$, the problem \eqref{mtv} is equivalent to find $\phi\in
\bar H$ so that
\begin{equation}\label{eqphi}
\begin{split}
\Delta_g \phi + f'(V)\phi - {1 \over|S|}\int_S
f'(V)\phi\, dv_g=&\,-R-\bigg[f(V+\phi)-f(V)-f'(V)\phi\\
&\,-{1\over |S|}\int_S
\lf[f(V+\phi)-f(V)-f'(V)\phi\rg]dv_g\bigg]
\end{split}
\end{equation}
Here, it is clear that $f'(V)=\la e^{\la V^2}(1+2\la V^2)$. However, instead of solving directly the problem \eqref{eqphi} we shall study a different problem. To this purpose we need to estimate $f'(V)$. Thus, denoting
\begin{equation}\label{defK}
\ds K:=\sum_{j=1}^k\chi_je^{-\varphi_j}\e_j^{-2}e^{w_j}
\end{equation}
we have the following result.

\begin{lem}
Assume \eqref{muj}-\equ{e}. There exists a constant $C>0$,
independent of $\la>0$ small, \st
\begin{equation}\label{k}
\|f'(V)-K\|_*\le  C \la
\end{equation}
for all $\xi \in \Xi$, and $m\in \ml{M}$.
\end{lem}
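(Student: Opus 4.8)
The plan is to compare $f'(V) = \lambda e^{\lambda V^2}(1 + 2\lambda V^2)$ with $K = \sum_{j=1}^k \chi_j e^{-\varphi_j}\e_j^{-2}e^{w_j}$ region by region, exactly mirroring the regions used in the proof of Lemma~\ref{estrr0}, and checking in each region that the difference is $O(\lambda \rho(x))$ with $\rho$ as in \eqref{roj}. First, away from the bubbles, i.e. in $S\setminus \cup_{j=1}^k B_{r_0}(\xi_j)$, we have $V = O(1)$ by \eqref{vo}, hence $\lambda V^2 = O(\lambda)$ and $f'(V) = \lambda e^{\lambda V^2}(1+2\lambda V^2) = O(\lambda)$; since $K \equiv 0$ there (the cut-offs $\chi_j$ are supported in $B_{2r_0}(\xi_j)$ but $w_j$ makes $K$ negligibly small once $|y_{\xi_j}(x)|\ge r_0$, of size $O(\e_j^2)$), we get $|f'(V)-K| = O(\lambda) = O(\lambda\rho(x))$ since $\rho \ge 1$.

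Next, inside $B_{r_0}(\xi_j)$ I use the expansion \eqref{lvlv2}: there $\lambda V^2 = w_j + \theta_j + \lambda m_j^2(w_j+\theta_j)^2 + \frac{1}{4\lambda m_j^2}$ and $\lambda V = \frac{1}{2m_j} + \lambda m_j(w_j+\theta_j)$, so that, using $\frac{1}{4\lambda m_j^2} = \log\frac{2m_j^2}{\e_j^2}$,
\begin{equation*}
f'(V) = \lambda e^{\lambda V^2}\big(1 + 2\lambda V^2\big) = m_j^2\,\e_j^{-2} e^{w_j+\theta_j+\lambda m_j^2(w_j+\theta_j)^2}\Big(\tfrac{2\lambda}{m_j} + \tfrac{1}{m_j}\big(\tfrac{1}{2\lambda m_j^2}\cdot 2\lambda\big) + \dots\Big),
\end{equation*}
more precisely $f'(V) = \e_j^{-2} e^{w_j}\cdot e^{\theta_j + \lambda m_j^2(w_j+\theta_j)^2}\big(1 + 2w_j + 2\theta_j + 2\lambda m_j^2(w_j+\theta_j)^2\big)$. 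Then I subtract $K = \chi_j e^{-\varphi_j}\e_j^{-2}e^{w_j}$ in this ball (the other terms of $K$ being $O(1)$), factor out $\e_j^{-2}e^{w_j}$, and expand the exponential and polynomial factors. In the innermost ball $B_{\e_j r_0}(\xi_j)$, $w_j = O(1)$ and $\theta_j = O(|y_{\xi_j}(x)|) + O(\sum_i \e_i|\log\e_i|^2) = O(\e_j)$, so $e^{\theta_j+\lambda m_j^2(w_j+\theta_j)^2} = 1 + O(\theta_j + \lambda) = 1 + O(\e_j + \lambda)$ and $e^{-\varphi_j} = 1 + O(|y_{\xi_j}(x)|^2) = 1 + O(\e_j^2)$ since $\varphi_j(0)=0$, $\nabla\varphi_j(0)=0$; hence the bracket is $O(w_j + \theta_j + \lambda) = O(\lambda) \cdot \e_j^{-2}e^{w_j}$-worth after multiplying back, consistent with $\rho_j \sim \e_j^{-2}e^{w_j}$ there. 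In the annulus $A_{\de\e_j|\log\e_j|,\,\e_j r_0}(\xi_j)$ one has $w_j = O(|\log\lambda|)$ and $\lambda w_j^2 = O(\lambda|\log\lambda|^2)$, so the leading discrepancy is $\e_j^{-2}e^{w_j}\big(2w_j e^{O(\lambda|\log\lambda|^2)} + O(\lambda w_j^2) + O(|y_{\xi_j}(x)|^2) + O(\lambda)\big)$; since $2\lambda w_j = 2\lambda m_j^2 \log\frac{8\mu_j^2}{(\cdots)^2}$ and $\lambda w_j \to 0$, this is again $O(\lambda)\e_j^{-2}e^{w_j}(1+|w_j|+w_j^2)$, matching the first term in $\rho_j$.

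The remaining, and genuinely delicate, region is the intermediate/outer annulus $A_{r_0,\,\de\e_j|\log\e_j|}(\xi_j)$ where $w_j$ is large and negative and $\lambda m_j^2 w_j^2$ is $O(1)$ but not small; here I follow the same book-keeping as at the end of the proof of Lemma~\ref{estrr0}. Using $w_j + \frac{1}{2\lambda m_j^2} = -4\log|y_{\xi_j}(x)| + O(1)$ and $\lambda w_j = O(1)$ uniformly in $B_{2r_0}(\xi_j)$, the term $e^{\lambda m_j^2(w_j+\theta_j)^2}$ becomes $e^{16\lambda m_j^2 \log^2|y_{\xi_j}(x)| + O(|\log|y_{\xi_j}(x)||)}$, and the polynomial factor $1 + 2\lambda V^2$ contributes the factor $\lambda m_j^2(\cdots)$ times $\{1 + |\log|y_{\xi_j}(x)||\}$. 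The upshot is $f'(V) = \lambda\, O\big(\{1+|\log|y_{\xi_j}(x)||\}\e_j^{-2}e^{w_j+\lambda m_j^2 w_j^2} + \lambda^{-1}\e_j^{-2}e^{w_j}\big) + O(\lambda)$, while $K = \chi_j e^{-\varphi_j}\e_j^{-2}e^{w_j}$ is already subsumed in the $\lambda^{-1}\e_j^{-2}e^{w_j}$ term (multiplied by $\lambda$) there, so $\|f'(V) - K\|_* \le C\lambda$ on this region as well. I expect this last region to be the main obstacle, precisely because the exponent $\lambda m_j^2 w_j^2$ is of order one and the weight $\rho_j$ in \eqref{roj} has been tailored to absorb it; one has to verify carefully — using \eqref{e}, which gives $(16\lambda m_j^2 t + 2)t \le t + \alpha$ for $t = \log|y_{\xi_j}(x)| \le \log r_0 < 0$ — that no uncontrolled growth is produced when passing from $r_0$ down to $\de\e_j|\log\e_j|$. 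Collecting the three regions and using the global definition of $\rho$ gives $|f'(V)(x) - K(x)| \le C\lambda\,\rho(x)$ for all $x\in S$, uniformly in $\xi\in\Xi$, $m\in\mathcal{M}$, which is \eqref{k}.
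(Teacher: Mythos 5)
Your overall strategy matches the paper's: work region by region, use \eqref{lvlv2} to rewrite $f'(V)$ inside $B_{r_0}(\xi_j)$, factor out $\e_j^{-2}e^{w_j}$, subtract $K$, and bound the resulting bracket against the weight $\rho$ exactly as in the proof of Lemma~\ref{estrr0}. The paper's proof is in fact shorter than yours — it simply writes down the formula for $f'(V)$, displays $f'(V)-K$ factored, and then says "similar to the estimate \eqref{re}" — so your extra region-by-region detail is welcome.

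However, your explicit formula for $f'(V)$ in $B_{r_0}(\xi_j)$ is wrong, and the error is not cosmetic. Starting from $f'(V)=\la e^{\la V^2}(1+2\la V^2)$ and \eqref{lvlv2}, together with $\tfrac{1}{4\la m_j^2}=\log\tfrac{2m_j^2}{\e_j^2}$, the correct expression is
\begin{equation*}
f'(V)=\lf(1+2\la m_j^2+4\la m_j^2(w_j+\theta_j)+4\la^2 m_j^4(w_j+\theta_j)^2\rg)\,\e_j^{-2}e^{w_j+\theta_j+\la m_j^2(w_j+\theta_j)^2},
\end{equation*}
whereas you wrote the polynomial prefactor as $1+2w_j+2\theta_j+2\la m_j^2(w_j+\theta_j)^2$. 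You dropped the factor $\la m_j^2$ multiplying the linear term $w_j+\theta_j$ (and miscounted the quadratic term as well; your intermediate display with the $m_j^2$ factored out is also off by powers of $m_j$). This is exactly where the estimate lives or dies: in the annulus $A_{\de\e_j|\log\e_j|,\,\e_j r_0}(\xi_j)$ one has $w_j=O(|\log\la|)$, so a term $2w_j$ in the bracket would give $|f'(V)-K|\gtrsim|\log\la|\,\e_j^{-2}e^{w_j}$, which is \emph{not} $O(\la\rho_j)$ there since $\la\rho_j\sim\la|\log\la|^2\e_j^{-2}e^{w_j}$ and $|\log\la|/(\la|\log\la|^2)=1/(\la|\log\la|)\to\infty$. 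With the correct coefficient $4\la m_j^2 w_j$ the ratio is $O(\la|\log\la|)/O(\la|\log\la|^2)\to 0$ and the bound closes. Your own parenthetical remark ("since $2\la w_j=\dots$ and $\la w_j\to0$") suggests you had the right coefficient in mind, but the displayed formula and the annulus estimate as written do not cohere, so you should redo that algebra and carry the $\la m_j^2$ factor through. The outer-region bookkeeping (down to $|y_{\xi_j}(x)|\sim r_0$), mirroring the end of the proof of Lemma~\ref{estrr0}, is the right idea.
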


\begin{proof}[\dem]
From \eqref{lvlv2} it follows  that in $B_{r_0}(\xi_j)$
$$e^{\la V^2}=2m_j^2\e_j^{-2}e^{w_j+\theta_j+\la m_j^2(w_j+\theta_j)^2}\quad\text{and}\quad 2\la V^2=2(w_j+\theta_j)+2\la m_j^2(w_j+\theta_j)^2+{1\over 2\la m_j^2},$$
so that, in $B_{r_0}(\xi_j)$
$$f'(V)=(1+2\la m_j^2+4\la m_j^2(w_j+\theta_j)+4\la^2 m_j^4(w_j+\theta_j)^2)\e_j^{-2}e^{w_j+\theta_j+\la m_j^2(w_j+\theta_j)^2}.$$
Thus, it is clear that $f'(V)=\e_j^{-2}e^{w_j}(1+O(\la))$ uniformly in $B_{\de\e_j}(\xi_j)$ and $f'(V)=O(\la)$ in $S\sm\cup_{j=1}^kB_{r_0}(\xi_j)$. Furthermore, we have that
\begin{equation*}
\begin{split}
f'(V)-K=\e_j^{-2}e^{w_j}&\Big[\lf(1+2\la m_j^2+4\la m_j^2(w_j+\theta_j)+4\la^2 m_j^4(w_j+\theta_j)^2\rg)\\
&\quad\times\e_j^{-2}e^{w_j+\theta_j+\la m_j^2(w_j+\theta_j)^2}-e^{-\varphi_j}\Big]
 \end{split}
\end{equation*}
uniformly in $B_{r_0}(\xi_j)$ and $\ds f'(V)-K =O(\la)$ in $S\sm\cup_{j=1}^kB_{r_0}(\xi_j)$.  Similar to the estimate \eqref{re}, we conclude \eqref{k}.
\end{proof}

\medskip

In order to simplify the arguments, in view of \eqref{k}, we write \eqref{eqphi} in the form
\begin{equation}\label{ephi}
L(\phi)=-[R+N(\phi)] \qquad\text{ in $S$},
\end{equation}
where the linear operator $L$ is defined as
\begin{equation}\label{ol}
L(\phi) = \Delta_g \phi + K\phi - {1 \over|S|}\int_S
K\phi\, dv_g,
\end{equation}
and the nonlinear part $N$ is given by
\begin{equation}\label{nlt}
\begin{split}
N(\phi)=&\,f(V+\phi)-f(V)-f'(V)\phi-{1\over |S|}\int_S
\lf[f(V+\phi)-f(V)-f'(V)\phi\rg]\\
&\,+[f'(V)-K]\phi-{1\over |S|}\int_S[f'(V)-K]\phi\,dv_g.
\end{split}
\end{equation}
Notice that for
all $\phi \in \bar H$
$$\int_S L(\phi) dv_g=\int_S N(\phi)dv_g=\int_S R dv_g=0.$$


\section{Variational reduction and proof of main results}\label{variat}
\noindent In the so-called nonlinear Lyapunov-Schimdt reduction,
an important step is the solvability theory for the linear operator, obtained as the linearization of
\eqref{mtv} at the approximating solution $V$, namely, \eqref{eqphi}. In our approach, in order to simplify the arguments we will study the operator $L$ given in (\ref{ol}) under suitable orthogonality conditions. Let us observe that $L(\phi)=\ti L(\phi) + c(\phi)$, for functions $\phi$ defined on $S$, with
\begin{equation}\label{dolti}
\ti L(\phi)=\Delta_g\phi + K\phi
\end{equation} and
$\ds c(\phi):=-\frac{1}{|S|}\int_{S} K\phi\, dv_g$, where $K$ is given by \eqref{defK}.  Observe that, as $\la \to 0$, formally the operator $\ti L$, scaled and centered at $0$
by setting $y =y_{\xi_j}(x)/\e_j $ for $x\in B_{\e_jr_0}(\xi_j)$, approaches $\hat L_j$
defined in $\mathbb{R}^2$ as
$$\hat L_j(\phi) = \Delta\phi+{8\mu_j^2\over (\mu_j^2+|y|^2)^2}\phi.$$ Due to
the intrinsic invariances, the kernel of $\hat L_j$ in
$L^\infty(\R^2)$  is non-empty and is spanned by $Y_{ij}$,
$i=0,1,2$, where
\begin{equation*}
Y_{ij}(y) = { 4 \mu_jy_i \over \mu_j^2+|y|^2} ,\qquad
i=1,2,\qquad\text{and}\qquad Y_{0j}(y) = 2\,{\mu_j^2-|y|^2\over \mu_j^2+|y|^2}.
\end{equation*}
Since \cite{DeKM,EGP,dmr1} it is by now rather standard to show the
invertibility of $L$ in a suitable ``orthogonal" space, and a
sketched proof of it will be given in Appendix A. See also \cite{EsFi} for an extension to a Riemann surface. Furthermore, an important goal in the study of the $L$ operator is to get rid of
the presence of the term $c(\phi)$.

\medskip \noindent To be more precise, for $i=0,1,2$ and $j=1,\dots,k$ introduce the functions
\begin{equation*}
Z_{ij}(x) = Y_{ij}\lf({y_{\xi_j}(x)\over
\e_j}\rg)=\left\{\begin{array}{ll}
\ds 2  {\mu_j^2\e_j^2- |y_{\xi_j}(x)|^2\over \mu_j^2\e_j^2+|y_{\xi_j}(x)|^2} &\hbox{for }i=0\\
&\\[-0.2cm]
 \ds {4\mu_j\e_j(y_{\xi_j}(x))_i \over \mu_j^2\e_j^2+|y_{\xi_j}(x)|^2}&\hbox{for }i=1,2. \end{array} \right.
\end{equation*}
and let $PZ_{ij}$ be the projections of $Z_{ij}$ as the solutions in $\bar H$ of
\begin{equation}\label{Pzij}
\left\{ \begin{array}{ll} \ds\Delta_g PZ_{ij} =\chi_j \Delta_g
Z_{ij}-\frac{1}{|S|}\int_S
\chi_j\Delta_g Z_{ij} dv_g &\text{in }S\\
\ds\int_S PZ_{ij} dv_g=0,
\end{array}\right..
\end{equation}
Notice that $- \lap_gZ_{ij}=e^{-\varphi_j}\e_j^{-2}e^{w_j}Z_{ij}$ in $B_{2r_0}(\xi_j)$ for all $i=0,1,2$ and $j=1,\dots,k$. In Appendix A we will prove the following result:
\begin{prop} \label{p2}
There exists $\la_0>0$ so that for any points $\xi=(\xi_1,\dots,\xi_k) \in \Xi$ and $m=(m_1,\dots,m_k)\in\ml{M}$, there is a unique solution $\phi \in \bar H(S) \cap W^{2,2}(S)$ and coefficients $c_{ij}
\in \mathbb{R}$ of
\begin{equation}\label{plco}
\left\{ \begin{array}{ll}
L(\phi) = h + \displaystyle \sum_{i=0}^{2} \sum_{j=1}^m c_{ij} \Delta_g PZ_{ij}&\text{in }S\\
\ds \int_S \phi \Delta_g PZ_{ij} dv_g=0
&\forall\: i=0,1,2,\, j=1,\dots,m
\end{array} \right.
\end{equation}
for all $0<\la<\la_0$, $h\in C(S)$ with $\|h\|_*<+\infty$ and $\ds \int_Sh\,dv_g=0$.
Moreover, the map $(\xi,m) \mapsto (\phi,c_{ij})$ is
differentiable in $(\xi,m)$ with
\begin{eqnarray}
&&\|\phi \|_\infty \le C \|h\|_*\:,\qquad \displaystyle \sum_{i=0}^{2} \sum_{j=1}^k  |c_{ij}|\le C\|h\|_* \label{estmfe1} \\
&& \sum_{j=1}^k \lf(\sum_{i=1}^2
\e_j\|\fr_{(\xi_j)_i} \phi\|_\infty + {1\over |\log \e_j|}
\|\fr_{m_j} \phi \|_\infty\rg) \le C
\|h\|_*\label{estd}
\end{eqnarray}
for some $C>0$.
\end{prop}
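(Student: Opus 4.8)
The plan is to follow the by-now standard Lyapunov--Schmidt a priori estimate scheme for this kind of singularly perturbed problem, adapted to the Riemann surface setting as in \cite{EsFi,dmr1}. The proof naturally splits into three parts: (i) an a priori estimate $\|\phi\|_\infty\le C\|h\|_*$ for solutions of \eqref{plco} under the orthogonality conditions; (ii) existence and uniqueness of $(\phi,c_{ij})$ via this estimate together with Fredholm theory; and (iii) the differentiable dependence on $(\xi,m)$ and the derivative bounds \eqref{estd}. The main subtlety, and the reason the statement is isolated as a proposition, is the presence of the nonlocal term $c(\phi)=-\frac1{|S|}\int_S K\phi\,dv_g$ inside $L$, together with the fact that the right-hand side is written in terms of $\Delta_g PZ_{ij}$ rather than $Z_{ij}$ directly; one must show these projections do not spoil the coercivity.

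First I would establish the a priori bound by contradiction: suppose there are sequences $\la_n\to 0$, $(\xi^n,m^n)\in\Xi\times\ml M$, $h_n$ with $\|h_n\|_*\to 0$, and solutions $\phi_n$ with $\|\phi_n\|_\infty=1$, and coefficients $c_{ij}^n$. Testing \eqref{plco} against $PZ_{ij}$ and using $\int_S\phi_n\Delta_g PZ_{ij}\,dv_g=0$, the near-orthogonality of the $\Delta_g PZ_{ij}$ among themselves, and the estimates $\int_S\Delta_g PZ_{ij}\cdot Z_{ij}\sim$ a positive constant, $\int_S K\phi_n$ small, one solves a nearly diagonal linear system to get $\sum_{i,j}|c_{ij}^n|\le C\|h_n\|_* + o(1)\|\phi_n\|_\infty$. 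Then I would blow up around each $\xi_j^n$ via $y=y_{\xi_j}(x)/\e_j$: by elliptic estimates $\phi_n$ composed with this scaling converges in $C^1_{loc}(\R^2)$ to a bounded solution of $\hat L_j\hat\phi=0$, hence $\hat\phi=\sum_{i=0}^2 a_i Y_{ij}$; the passage to the limit in the orthogonality conditions $\int_S\phi_n\Delta_g PZ_{ij}=0$ forces all $a_i=0$, so $\phi_n\to 0$ locally near each concentration point. Away from the points, $K$ is small in $\|\cdot\|_*$-sense and the weight $\rho$ is bounded, so a maximum-principle/barrier argument (using $\rho$ as a supersolution weight, as in Lemma \ref{estrr0}'s norm design) controls $\sup_{S\sm\cup B_{r_0}}|\phi_n|$ in terms of boundary data plus $\|h_n\|_*$; combined with the interior decay this contradicts $\|\phi_n\|_\infty=1$.

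Second, with the a priori estimate in hand, existence and uniqueness follow from a standard functional-analytic argument: reformulate \eqref{plco} in the Hilbert space $\bar H$ with the constraint $\int_S\phi\,\Delta_g PZ_{ij}\,dv_g=0$, write $L$ as a compact perturbation (via $(-\Delta_g)^{-1}$ restricted to zero-average functions, which is compact on $\bar H$) of an invertible operator, and invoke Fredholm's alternative — the a priori estimate shows the kernel is trivial in the constrained space, hence the map is onto with the asserted bound, and the $\sum|c_{ij}|$ bound comes from the same testing argument done non-asymptotically. Finally, differentiability in $(\xi,m)$ follows by applying the implicit function theorem to the map $(\xi,m,\phi,c)\mapsto(L(\phi)-h-\sum c_{ij}\Delta_g PZ_{ij},\ (\int_S\phi\,\Delta_g PZ_{ij})_{ij})$, whose linearization in $(\phi,c)$ is exactly the invertible operator just constructed; formally differentiating the equation, the derivatives $\fr_{(\xi_j)_i}\phi$ and $\fr_{m_j}\phi$ solve \eqref{plco}-type problems whose right-hand sides involve $\fr_{(\xi_j)_i}K$, $\fr_{m_j}K$ and $\fr PZ_{ij}$; estimating these source terms in $\|\cdot\|_*$ produces the weights $\e_j^{-1}$ and $|\log\e_j|$ in \eqref{estd} — e.g. $\|\fr_{(\xi_j)_i}K\|_*=O(\e_j^{-1})$ and $\|\fr_{m_j}K\|_*=O(|\log\e_j|)$ because of \eqref{e} — so rescaling the derivative estimate $\|\cdot\|_\infty\le C\|\cdot\|_*$ gives the claim. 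The hard part is part (i), specifically handling the nonlocal term and the $\Delta_g PZ_{ij}$ on the right simultaneously while keeping the blow-up analysis clean; the details mirror \cite{dmr1,EsFi} and are relegated to Appendix A.
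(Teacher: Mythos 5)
Your overall scheme---contradiction a priori estimate, testing against $PZ_{ij}$ for the $c_{ij}$ bound, blow-up at scale $\e_j$ with classification of bounded kernel elements of $\hat L_j$, a barrier built from the weight $\rho$, Fredholm alternative, and the implicit function theorem for the derivative bounds---is essentially the paper's proof in Appendix A. But there is a real gap in the way you close the contradiction: you locate the barrier argument in the outer region $S\sm\cup_{j} B_{r_0}(\xi_j)$, where in fact no barrier is needed. There $K=O(\e_j^2)$ and the $\Delta_gPZ_{ij}$ terms are $O(\e_j^2)$, so classical elliptic estimates together with $\|\phi_n\|_\infty=1$ already give $C^1$-locally-uniform convergence of $\phi_n$ to a limit $\phi^*$ solving $\Delta_g\phi^*=c_0$ with zero mean, hence $\phi^*\equiv 0$. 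In the paper this outer decay is proved \emph{first} and is an input to the barrier step---it is what forces $c(\phi_n)\to 0$ (hence $c_{ij}^n\to 0$) and supplies the outer boundary value. The barrier built from $\rho_{\mu,\e,m}$ in \eqref{ro} is needed instead in the intermediate annulus $\de\e_j<|y_{\xi_j}(x)|<r$, precisely the region where $K$ is neither uniformly small nor a simple scaled Liouville potential, and where the multi-regime definition of $\rho_j$ in \eqref{roj} enters (this is the content of the auxiliary lemma about $-\Delta\psi=\rho_{\mu,\e,m}$ and the annulus claim in Appendix A).

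As written, your argument controls $\phi_n$ on $S\sm\cup_j B_{r_0}(\xi_j)$ (through the mislocated barrier, though outer elliptic estimates suffice) and on balls of radius $O(\e_j)$ (through the blow-up), but leaves the annuli $B_{r_0}(\xi_j)\sm B_{\de\e_j}(\xi_j)$ uncontrolled. Since $\|\phi_n\|_\infty$ may well be attained there, you cannot yet conclude a contradiction with $\|\phi_n\|_\infty=1$. The missing ingredient is the annulus maximum-principle estimate
\begin{equation*}
\|\phi\|_{L^\infty(B_{r}(\xi_j)\sm \bar B_{\de\e_j}(\xi_j))}\le C\Big[\|\phi\|_{L^\infty(\fr(B_{r}(\xi_j)\sm \bar B_{\de\e_j}(\xi_j)))}+\|h\|_{**}\Big],
\end{equation*}
obtained from the comparison function $2\|\phi\|_b\,g+\|h\|_{**}[\psi_{j,1}+\psi_{j,2}]$ with $g(y)=Y_{0j}(a\e_j^{-1}y)$ and $\psi_{j,2}$ the solution of $-\Delta\psi_{j,2}=2\rho_{\mu_j,\e_j,m_j}$ on the annulus. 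Inserting this between your two regions closes the argument; the remaining steps you sketch for existence (Fredholm on the constrained space $H$) and for \eqref{estd} (implicit function theorem with source estimates $\|\fr_{\xi_j}K\|_*=O(\e_j^{-1})$, $\|\fr_{m_j}K\|_*=O(|\log\e_j|)$ and corresponding bounds for $\fr PZ_{ij}$) match the paper.
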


\noindent Let us stress that the right hand side of the equation \eqref{plco} of $L(\phi)$ integrates zero.

\medskip \noindent Let us recall that  $v=V+\phi$  solves \eqref{mtv} if $\phi\in \bar H$ does satisfy (\ref{ephi}). Since the operator $L$ is not fully invertible, in view of Proposition \ref{p2} one can solve the nonlinear problem (\ref{ephi}) just up to a linear combination of $\Delta_g PZ_{ij}$'s, as explained in the following (see Appendix B for a proof):
\begin{prop}\label{lpnlabis}
Let $\delta_0,r_0 >0$ small and fixed. Then there exist $\la_0>0$, $C>0$ \st for
$0<\la<\la_0$, for any $\xi=(\xi_1,\dots,\xi_k)\in \Xi$ and $m=(m_1,\dots,m_k)\in\ml{M}$, problem
\begin{equation}\label{pnlabis}
\left\{ \begin{array}{ll} L(\phi)= -[R+N(\phi)] +\displaystyle \sum_{i=0}^{2}\sum_{j=1}^k c_{ij} \Delta_g
PZ_{ij}& \text{in } S\\
\ds \int_S \phi \Delta_g PZ_{ij} dv_g= 0\
\text{ for all } \: i=0,1,2,\, j=1,\dots,k
\end{array} \right.
\end{equation}
admits a unique solution $\phi(\xi,m)\in \bar H\cap W^{2,2}(S)$ and $c_{ij}=c_{ij}(\xi,m)$, $i=0,1,2$, $j=1,\dots,k$ \st
\begin{equation}\label{cotaphi}
\|\phi\|_\infty\le C\la,\qquad \displaystyle \sum_{i=0}^{2} \sum_{j=1}^k  |c_{ij}|\le C\la
\end{equation}
where $R$, $N$ are given by \eqref{erre} and \eqref{nlt}, respectively. Furthermore, the map $(\xi,m)\mapsto\phi(\xi,m)\in C(S)$ is $C^1$ and for $l=1,\dots,k$ we have
\begin{equation}\label{cotadphi}
\|\fr_{\xi_l}\phi\|_\infty\le {C\la\over \e_l} \quad\text{and}\quad\|\fr_{m_l}\phi\|_\infty\le C.
\end{equation}
\end{prop}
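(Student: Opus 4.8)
The plan is to recast \eqref{pnlabis} as a fixed point problem and apply the contraction mapping principle in the ball $\mathcal{B}=\{\phi\in\bar H\cap W^{2,2}(S): \|\phi\|_\infty\le C_0\la\}$ for a suitable large constant $C_0$. Using Proposition \ref{p2}, for $h\in C(S)$ with $\|h\|_*<\infty$ and $\int_S h\,dv_g=0$ we denote by $T(h)$ the unique solution $\phi$ of \eqref{plco}; then $T$ is linear and bounded with $\|T(h)\|_\infty\le C\|h\|_*$. Problem \eqref{pnlabis} is then equivalent to the fixed point equation $\phi = \mathcal{A}(\phi) := -T\big(R + N(\phi)\big)$, provided $R+N(\phi)$ has zero average, which indeed holds since $\int_S R\,dv_g=\int_S N(\phi)\,dv_g=0$ as noted after \eqref{nlt}. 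First I would record the two estimates that drive everything: by Lemma \ref{estrr0}, $\|R\|_*\le C\la$, and the crucial nonlinear estimate $\|N(\phi)\|_* \le C\big(\|\phi\|_\infty^2 + \la\|\phi\|_\infty\big)$ together with the Lipschitz bound $\|N(\phi_1)-N(\phi_2)\|_* \le C\big(\la + \|\phi_1\|_\infty + \|\phi_2\|_\infty\big)\|\phi_1-\phi_2\|_\infty$ for $\phi_i\in\mathcal{B}$.

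Granting these, the contraction argument is routine: for $\phi\in\mathcal{B}$ one has $\|\mathcal{A}(\phi)\|_\infty \le C\big(\|R\|_* + \|N(\phi)\|_*\big) \le C\big(\la + C_0^2\la^2 + C_0\la^2\big) \le C_0\la$ for $\la$ small and $C_0$ chosen appropriately, so $\mathcal{A}$ maps $\mathcal{B}$ into itself; and $\|\mathcal{A}(\phi_1)-\mathcal{A}(\phi_2)\|_\infty \le C\big(\la + 2C_0\la\big)\|\phi_1-\phi_2\|_\infty \le \tfrac12\|\phi_1-\phi_2\|_\infty$, so $\mathcal{A}$ is a contraction. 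This yields the unique $\phi=\phi(\xi,m)$ with $\|\phi\|_\infty\le C\la$; the coefficients $c_{ij}$ are then read off from Proposition \ref{p2} applied to $h=-[R+N(\phi)]$, giving $\sum_{i,j}|c_{ij}|\le C\|R+N(\phi)\|_*\le C\la$, which is \eqref{cotaphi}.

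For the differentiability in $(\xi,m)$ and the derivative bounds \eqref{cotadphi}, I would apply the implicit function theorem to the map $\mathcal{G}(\xi,m,\phi) = \phi + T_{\xi,m}\big(R_{\xi,m} + N_{\xi,m}(\phi)\big)$, where the subscripts emphasize the smooth dependence of $T$, $R$ and $N$ on the parameters; the partial derivative $D_\phi\mathcal{G}$ at the solution is $I + T\circ D_\phi N$, which is invertible since $\|T\circ D_\phi N\|\le C\la<1$. Differentiating the fixed point identity formally in $\xi_l$ gives $\fr_{\xi_l}\phi = -\fr_{\xi_l}T\big(R+N(\phi)\big) - T\big(\fr_{\xi_l}R + \fr_{\xi_l}N(\phi) + D_\phi N(\phi)[\fr_{\xi_l}\phi]\big)$; absorbing the last term on the left and using $\|\fr_{\xi_l}R\|_* = O(\la/\e_l)$ (one derivative of the error costs a factor $\e_l^{-1}$, as is visible from \eqref{roj}) together with the parameter-derivative bound for $T$ from Proposition \ref{p2} yields $\|\fr_{\xi_l}\phi\|_\infty \le C\la/\e_l$; the estimate $\|\fr_{m_l}\phi\|_\infty\le C$ follows analogously, noting that an $m_l$-derivative of $R$ costs only a factor $|\log\e_l|$ and $\la|\log\e_l| = O(1)$ by \eqref{e}.

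The main obstacle is the nonlinear estimate on $N(\phi)$ in the $\|\cdot\|_*$ norm. Because $f(v)=\la v e^{\la v^2}$ has double-exponential growth, the naive Taylor expansion of $f(V+\phi)-f(V)-f'(V)\phi$ produces terms like $\la\, e^{\la(V+\phi)^2}$ with cross terms $e^{2\la V\phi}$, and near each $\xi_j$ one has $\la V \sim \tfrac{1}{2m_j}$ of order one while $V$ itself is of order $|\log\la|$ at the core; so one must track carefully, region by region (the core ball $B_{\de\e_j|\log\e_j|}(\xi_j)$, the intermediate annulus up to $B_{\de\sqrt{\e_j}}(\xi_j)$, and the outer region), that $e^{\la m_j^2(w_j+\theta_j+\,\cdot\,)^2}$ stays comparable to $e^{\la m_j^2 w_j^2}$ after a small perturbation, using $\|\phi\|_\infty\le C\la$ and $\la w_j^2 = O(\la|\log\la|^2)\to 0$. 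This is precisely the multiple-regime analysis that already underlies the proof of Lemma \ref{estrr0}, and the bound on $N(\phi)$ is obtained by the same splitting, comparing against the weight $\rho(x)$ in \eqref{roj}; the extra $[f'(V)-K]\phi$ contribution is harmless by \eqref{k}. The remaining verifications — zero average, the contraction constants, and the parameter differentiation — are then mechanical.
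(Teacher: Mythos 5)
Your proposal matches the paper's proof essentially step for step: the fixed-point reformulation $\phi=-T(R+N(\phi))$ in the ball $\{\|\phi\|_\infty\le\nu\la\}$, the key bound $\|N(\phi)\|_*\le C(\|\phi\|_\infty^2+\la\|\phi\|_\infty)$ via a Taylor remainder $f''(V+s\phi)\phi^2$ together with $\|f'(V)-K\|_*\le C\la$, the Lipschitz estimate yielding a contraction, reading off $\sum|c_{ij}|$ from Proposition \ref{p2}, and then the implicit function theorem plus formal differentiation of the fixed-point identity with the bounds $\|\fr_{\xi_l}R\|_*=O(\la/\e_l)$, $\|\fr_{m_l}R\|_*=O(\la|\log\e_l|)$ and $\la|\log\e_l|=O(1)$. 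You also correctly identify the genuine technical crux — showing $\|f''(V+s\phi)\|_*\le C$ uniformly across the three regimes encoded in the weight $\rho$ — which is exactly how the paper handles it.
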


\noindent The function $\phi(\xi,m)$ obtained in Proposition \ref{lpnlabis} will be a true solution of \eqref{ephi} if $\xi$ and $m$ are such that $c_{ij}(\xi,m )=0$ for all $i=0,1,2,$ and $j=1,\dots,k$. This problem is equivalent to finding critical
points of the reduced energy
\begin{equation}\label{efela}
\ml{F}_\lambda(\xi, m )=J_\la\big(U(\xi,m)+\ti\phi(\xi,m) \big),
\end{equation}
where $J_\lambda$ is given by \eqref{energy}, $U(\xi,m)=\sqrt{\la}\, V(\xi,m)$ and $\ti\phi(\xi,m)=\sqrt{\la}\,\phi(\xi,m ) $, as stated in (See Appendix C)

\begin{lem}\label{cpfc0bis}
There exists $\la_0$ such that, if $(\xi,m)\in
\Xi \times \ml{M}$ is a critical point of $\ml{F}_\lambda$ with $0<\la<\la_0$, then
$v=V(\xi,m)+\phi(\xi,m)$ is a solution of \eqref{mtv}, i.e., $c_{ij}(\xi,m )=0$ for all $i=0,1,2,$ and
$j=1,\dots,k$.
\end{lem}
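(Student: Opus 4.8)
The plan is to show that $v = V(\xi,m)+\phi(\xi,m)$ solves \eqref{mtv} exactly when the coefficients $c_{ij}(\xi,m)$ in Proposition~\ref{lpnlabis} all vanish, and that the latter is forced by criticality of $\ml{F}_\la$. The first half is the easy direction: from \eqref{pnlabis}, $v = V+\phi$ satisfies $\Delta_g v + f(V+\phi) - \tfrac1{|S|}\int_S f(V+\phi)\,dv_g$ plus the remainder $\sum_{i,j}c_{ij}\Delta_g PZ_{ij}$ (after retracing how \eqref{ephi} was assembled from \eqref{eqphi}), so if every $c_{ij}=0$ then $v$ solves \eqref{mtv}; here one uses that both sides have zero average, as remarked after \eqref{nlt}. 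So the content of the lemma is the converse: a critical point of $\ml{F}_\la$ yields $c_{ij}\equiv 0$.

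The key step is to differentiate the identity $\ml{F}_\la(\xi,m) = J_\la\big(U(\xi,m)+\ti\phi(\xi,m)\big)$ in the variables $(\xi,m)$ and use that $v_\la := V+\phi$ satisfies \eqref{pnlabis}. Writing $D J_\la(v_\la)[\cdot]$ for the differential of the energy and noting $U = \sqrt\la V$, $\ti\phi=\sqrt\la\,\phi$, the chain rule gives, for each parameter $s\in\{(\xi_l)_1,(\xi_l)_2,m_l\}$,
\begin{equation*}
\fr_s \ml{F}_\la(\xi,m) = \sqrt\la\; DJ_\la(v_\la)\big[\fr_s V + \fr_s\phi\big].
\end{equation*}
Because $v_\la$ solves \eqref{pnlabis}, one has $DJ_\la(v_\la)[\psi] = -\la^{-1/2}\,c\,\sum_{i,j} c_{ij}\int_S \Delta_g PZ_{ij}\,\psi\,dv_g$ for a suitable normalization constant (this comes from testing the PDE \eqref{pnlabis} against $\psi$; recall $v$ solving \eqref{mtv} is the Euler--Lagrange equation for $J_\la$ after the scaling $u=\sqrt\la v$, so the extra term $\sum c_{ij}\Delta_g PZ_{ij}$ is exactly what obstructs $v_\la$ from being a critical point). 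Hence at a critical point of $\ml{F}_\la$,
\begin{equation*}
\sum_{i=0}^2\sum_{j=1}^k c_{ij}\int_S \Delta_g PZ_{ij}\,\big(\fr_s V + \fr_s\phi\big)\,dv_g = 0
\end{equation*}
for every choice of $s$, a linear system of $3k$ equations in the $3k$ unknowns $c_{ij}$. The task is to show the $3k\times 3k$ coefficient matrix $\big(\int_S \Delta_g PZ_{ij}\,(\fr_s V+\fr_s\phi)\big)$ is invertible for $\la$ small, which forces $c_{ij}=0$.

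The main obstacle, and the technical heart of the argument, is exactly this invertibility. The strategy is an asymptotic expansion: one shows $\fr_{(\xi_l)_i} V$ is, to leading order and after rescaling $y=y_{\xi_l}(x)/\e_l$, a multiple of $\e_l^{-1} Z_{il}$ (from $\fr_{y_i} w_{\mu_l}$), and $\fr_{m_l} V$ is to leading order a multiple of $|\log\e_l|^{-1}Z_{0l}$ (using \eqref{muj}--\eqref{e}, through which $\mu_l$ and $\e_l$ depend on $m_l$); the contributions of $\fr_s\phi$ are lower order by the bounds \eqref{cotadphi} in Proposition~\ref{lpnlabis}, and cross terms between different bubbles $j\ne l$ are negligible since the $\xi_j$ are $4r_0$-separated and $PZ_{ij}$ concentrates near $\xi_j$. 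Thus, after multiplying the $l$-th block of rows by the appropriate power of $\e_l$ or $|\log\e_l|$, the matrix becomes block-diagonal-dominant with diagonal blocks converging to diag$\big(\int_{\R^2}|\nabla Y_{il}|^2\big)_{i=0,1,2}$ up to a fixed nonzero constant (using $-\Delta_g Z_{ij} = e^{-\varphi_j}\e_j^{-2}e^{w_j}Z_{ij}$ on $B_{2r_0}(\xi_j)$ and the standard nondegeneracy integrals for $Y_{ij}$), which are strictly positive. Therefore the matrix is invertible for $\la$ small enough, all $c_{ij}$ vanish, and the lemma follows. The routine parts — the precise expansions of the derivatives of $V$, and the bookkeeping of error terms in the $\|\cdot\|_*$ norm — are deferred to Appendix~C.
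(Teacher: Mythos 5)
Your proposal is correct and follows essentially the same route as the paper's proof in Appendix~C: differentiate $\ml{F}_\la$ in each parameter, use the equation \eqref{pnlabis} to reduce $\fr_s\ml{F}_\la = 0$ to the homogeneous linear system $\sum_{i,j}c_{ij}\int_S\Delta_g PZ_{ij}(\fr_s V + \fr_s\phi)\,dv_g = 0$, rescale the rows by $\e_l$ (for $\fr_{\xi_l}$) and $|\log\e_l|^{-1}$ (for $\fr_{m_l}$) so that the coefficient matrix becomes diagonal dominant as $\la\to 0$, and conclude $c_{ij}=0$. (A small notational slip: you write $DJ_\la(v_\la)$ where you mean $DJ_\la(\sqrt\la\,v_\la)$, since $\ml{F}_\la = J_\la(U+\ti\phi)$ with $U+\ti\phi=\sqrt\la(V+\phi)$, but the factors of $\sqrt\la$ you carry are consistent with this.)
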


\noindent Once equation \eqref{mtv} has been reduced to the
search of c.p.'s for $\ml{F}_\lambda$, it becomes crucial to show that
the main asymptotic term of $\ml{F}_\lambda$ is given by
$J_\lambda ( U )$, for which we also have an expansion. More precisely, in section 5 we will prove
\begin{prop} \label{fullexpansionenergy}
Assume \eqref{muj}-\eqref{e}. The following expansion does hold
\begin{eqnarray} \label{fullJUt}
\ml{F}_\lambda (\xi , m) &=&2\pi k-{\la|S|\over 2}+8\pi\la\psi_k(\xi,m) + \theta_\lambda(\xi, m ) \nonumber
\end{eqnarray}
in $C^1(\Xi\times \ml{M})$ as $\la\to 0^+$, where $\psi_k(\xi,m)=\psi_k(\xi_1,\dots,\xi_k,m_1,\dots,m_k)$ is given by
\eqref{fik}
 and the term $\theta_\lambda(\xi, m )$ satisfies
\begin{eqnarray} \label{rlambda}
|\theta_\la(\xi,m)| +
\sum_{l=1}^k\lf[\sum_{i=1}^2\lf|\fr_{(\xi_l)_i}\theta_\la(\xi,m)\rg| +  \lf|\fr_{m_l}\theta_\la(\xi,m)\rg|\rg]=O(\la^2|\log\la|)
\end{eqnarray}
uniformly for points $(\xi, m )\in \Xi\times\ml{M}$.
\end{prop}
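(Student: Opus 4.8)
The plan is to compute the expansion of $\ml{F}_\la(\xi,m)=J_\la(U(\xi,m)+\ti\phi(\xi,m))$ in two stages: first expand the ``main term'' $J_\la(U(\xi,m))$ with $U=\sqrt\la\,V$, and then control the error produced by the correction $\ti\phi=\sqrt\la\,\phi$. For the first stage I would expand the energy
\[
J_\la(U) = \frac12\int_S|\grad U|_g^2\,dv_g - \frac\la2\int_S e^{U^2}\,dv_g = \frac{\la}{2}\int_S|\grad V|_g^2\,dv_g - \frac\la2\int_S e^{\la V^2}\,dv_g
\]
using the definition $V=\sum_j m_j PU_j$. For the gradient term I would integrate by parts, using $-\Delta_g PU_j = \chi_j e^{-\varphi_j}\e_j^{-2}e^{w_j} - \tfrac1{|S|}\int_S\chi_j e^{-\varphi_j}\e_j^{-2}e^{w_j}$ from \eqref{ePu}, together with the asymptotic expansion of $PU_j$ in Lemma \ref{ewfxi} (i.e. $PU_j = \chi_j[U_j-\log(8\de_j^2)] + 8\pi H(\cdot,\xi_j) + O(\de_j^2|\log\de_j|)$), so that the pairing $\int_S \grad(PU_i)\cdot\grad(PU_j)\,dv_g$ produces the ``self'' terms $8\pi H(\xi_j,\xi_j)$ and the ``interaction'' terms $8\pi G(\xi_i,\xi_j)$ after scaling $\e_j z = y_{\xi_j}(x)$ near each $\xi_j$ and picking up the divergent contribution $\int_{\R^2}\frac{8\mu_j^2}{(\mu_j^2+|z|^2)^2}\log\frac1{|z|}\,dz$-type integrals. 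For the exponential term I would split $S=\bigcup_j B_{r_0}(\xi_j) \cup (S\setminus\bigcup_j B_{r_0}(\xi_j))$ exactly as in the proof of Lemma \ref{estrr0}, use the representations \eqref{ttu}, \eqref{lvlv2}, \eqref{fv} for $\la V^2$ inside each ball, rescale, and use $\tfrac1{4\la m_j^2}=\log(2m_j^2/\e_j^2)$ and \eqref{e} to see the leading contribution is $\int_{B_{r_0}(\xi_j)}\e_j^{-2}e^{w_j+\theta_j}\,dv_g = 8\pi + O(\la)$. Collecting the constant, the $O(\la)$ pieces, and the $\la$-order pieces carrying $\psi_k$, and bookkeeping the choices \eqref{muj}, \eqref{e} carefully, should produce the claimed $C^0$ expansion $J_\la(U) = 2\pi k - \tfrac{\la|S|}{2} + 8\pi\la\psi_k(\xi,m) + O(\la^2|\log\la|)$.

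For the second stage I would estimate $\ml{F}_\la(\xi,m) - J_\la(U(\xi,m))$. Writing $\ml{F}_\la = J_\la(U+\ti\phi)$ and Taylor-expanding, $J_\la(U+\ti\phi) - J_\la(U) = DJ_\la(U)[\ti\phi] + \tfrac12 D^2J_\la(U+t\ti\phi)[\ti\phi,\ti\phi]$ for some $t\in(0,1)$. Here $DJ_\la(U)[\ti\phi] = \la\int_S(\Delta_g V + \la V e^{\la V^2})\phi\,dv_g$ up to the average correction, which by \eqref{erre} is $-\la\int_S R\,\phi\,dv_g$; since $\|R\|_*\le C\la$ by Lemma \ref{estrr0}, $\|\phi\|_\infty\le C\la$ by Proposition \ref{lpnlabis}, and $\int_S\rho\,dv_g = O(1)$ (which follows from \eqref{roj} by the same rescaling), this term is $O(\la^3)$. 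The second-order term requires controlling $\int_S f'(V+t\ti\phi)\ti\phi^2$, which by the $\|\phi\|_\infty\le C\la$ bound, the bound \eqref{k} on $f'(V)$, and the boundedness of $\int_S K\,dv_g$, is also $O(\la^2|\log\la|)$-small after scaling — the $|\log\la|$ slack absorbing the logarithmic growth of $w_j$ in the concentration region. Hence the $C^0$ part of \eqref{rlambda} holds.

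The $C^1$ estimate — controlling $\fr_{(\xi_l)_i}\theta_\la$ and $\fr_{m_l}\theta_\la$ — is where I expect the main difficulty. The strategy would be to differentiate the identity $\ml{F}_\la(\xi,m) = J_\la(U+\ti\phi)$ in the parameters, using the chain rule and the differentiability of $(\xi,m)\mapsto\phi$ from Proposition \ref{lpnlabis} together with the derivative bounds \eqref{cotadphi}, namely $\|\fr_{\xi_l}\phi\|_\infty\le C\la/\e_l$ and $\|\fr_{m_l}\phi\|_\infty\le C$. Because $\fr_{(\xi_l)_i}U = \sqrt\la\,\fr_{(\xi_l)_i}V$ involves terms of size $O(\e_l^{-1})$ (the bubbles concentrate at scale $\e_l$), one must check that these large factors are always paired against quantities carrying a compensating $\e_l$ (e.g. from $\e_l\|\fr_{(\xi_l)_i}\phi\|$ in \eqref{estd}, or from the orthogonality conditions $\int_S\phi\,\Delta_g PZ_{ij} = 0$ which kill the dangerous leading terms upon differentiation). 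Concretely I would use that $\fr_{(\xi_l)_i}R$ and $\fr_{(\xi_l)_i}K$ satisfy weighted bounds of the form $\|\fr_{(\xi_l)_i}R\|_* \le C\la/\e_l$, $\|\fr_{(\xi_l)_i}K\|_* \le C/\e_l$ (obtained by differentiating the computations in Lemma \ref{estrr0}), so that after integrating against $\phi$ or $\fr\phi$ and using the area bound on $\rho$, every term is $O(\la^2|\log\la|)$. Differentiating the first-stage expansion of $J_\la(U)$ directly and matching it to $8\pi\la\,\grad\psi_k$ is then a routine but lengthy computation, since $\psi_k$ is a smooth function of $(\xi,m)$ on the compact set $\bar{\ml{D}}$ and the Green's function $G$, $H$ depend smoothly on their arguments away from the diagonal. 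The delicate bookkeeping of the multiple-scale error regions (the annuli $A_{r_0,\de\sqrt{\e_j}}$, $A_{\de\sqrt{\e_j},\de\e_j|\log\e_j|}$, $B_{\de\e_j|\log\e_j|}$) under differentiation, ensuring uniformity in $(\xi,m)\in\Xi\times\ml{M}$, is the real technical obstacle; everything else reduces to the estimates already established for $R$, $f'(V)-K$, and $\phi$.
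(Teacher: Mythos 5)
Your proposal gets the overall architecture right (expand $J_\la(U)$, then control $\ml{F}_\la - J_\la(U)$, then differentiate), and the first stage — the $C^0$ expansion of $J_\la(U)$ via the projection $PU_j$, Lemma \ref{ewfxi}, and the multi-scale decomposition from the proof of Lemma \ref{estrr0} — matches the paper. Two points of comparison worth noting. First, you Taylor-expand $J_\la(U+\ti\phi)-J_\la(U)$ around $U$, so you need to control both $DJ_\la(U)[\ti\phi]$ and the remainder $D^2J_\la(U+t\ti\phi)[\ti\phi,\ti\phi]$, and the latter contains $\int_S|\nabla\ti\phi|_g^2\,dv_g$ which you do not address; your claim that the second-order term "requires controlling $\int_S f'(V+t\ti\phi)\ti\phi^2$" omits the gradient piece. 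The paper instead uses $DJ_\la(U+\ti\phi)[\ti\phi]=0$ (a consequence of \eqref{pnlabis} plus orthogonality), writes the difference as $-\int_0^1 D^2J_\la(U+t\ti\phi)[\ti\phi,\ti\phi]\,t\,dt$, and then converts the gradient term into $\int_S[R+N(\phi)]\phi + \int_S[K-f'(V+t\phi)]\phi^2$ using the equation; the same conversion would repair your version, but you'd need to spell it out.

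The real gap is in the $C^1$ estimate with respect to $\xi$. The naive bound you propose, namely integrating $\fr_{\xi_l}R$ against $\phi$ using $\|\fr_{\xi_l}R\|_*\le C\la/\e_l$, $\|\phi\|_\infty\le C\la$ and $\int_S\rho\,dv_g = O(1)$, gives $O(\la^2/\e_l)$, and since by \eqref{e} $\e_l\sim e^{-1/(8\la m_l^2)}$ this is exponentially larger than the target $O(\la^2|\log\la|)$. Likewise $\|\fr_{\xi_l}\phi\|_\infty\le C\la/\e_l$ is far too large to plug in naively. You gesture at "compensating $\e_l$ factors" and the orthogonality conditions "killing the dangerous leading terms," but that is not the mechanism the paper actually uses, and by itself it does not close the estimate. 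The paper's Claim \ref{expansionenergydxi} attacks $\fr_{(\xi_l)_q}\ml{F}_\la$ directly (not via $\fr_\xi J_\la(U)$): it writes this derivative as $-\la\sum c_{ij}\int_S\Delta_gPZ_{ij}[\fr_{(\xi_l)_q}V+\fr_{(\xi_l)_q}\phi]$, replaces the parameter derivative $\fr_{(\xi_l)_q}$ by the local spatial derivative $\fr_{x_q}$ (their difference being lower order near $\xi_l$), integrates by parts back to a local integral of $[\Delta_gv_\xi+\la v_\xi e^{\la v_\xi^2}]\fr_{x_q}v_\xi$ over $B_{2r_0}(\xi_l)$, and then applies a Pohozaev-type identity (as in \cite{EMP}, \cite{CDF}) together with the $C^1$ expansion of $v_\xi=V+\phi$ near $\xi_l$ to produce $8\pi\nabla_{\xi_l}\psi_k+O(\la|\log\la|)$. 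This Pohozaev step is the essential ingredient, and without it your proposed route for the $\xi$-derivative does not give the claimed bound. Note also that the paper treats the $m$-derivative quite differently (Claims \ref{expansionenergydm} and 4.4), since $\|\fr_{m_l}\phi\|_\infty\le C$ is benign there; your unified treatment of both derivatives obscures this asymmetry, which is precisely why the $\xi$ case needs the extra tool.
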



\bigskip

\noindent We are now in position to prove the main results
stated in the Introduction.
\begin{proof}[{\bf Proof (of Theorem \ref{teo2}):}] According to Lemma \ref{cpfc0bis}, we have a solution of problem \eqref{mt} if we adjust $(\xi,m)$ so that it is a critical point of $\ml{F}_\la$ defined by \eqref{efela}. This is equivalent to finding a critical point of
$$\ti{\ml{F}}_\la(\xi,m)={1\over 8\pi\la} \lf[\ml{F}_\la(\xi,m)-2\pi k+{\la |S| \over 2} \rg].$$
Thanks to Proposition \ref{fullexpansionenergy}, we have that the function $\ti{{\mathcal F}}_\la (\xi
, m )$ is $C^1$-close to $\psi_k (\xi , m)$  in $\Xi\times \ml{M}$ when $\la $ is
small enough. Now, let $\DD$ be the open set such that
$$
\bar \DD \subset \{ (\xi , m ) \in S^k \times \R_+^k \, : \,
\xi_i \not= \xi_j , \forall i\not= j \},
$$
where $\psi_k$ has a stable critical point situation. Then any $C^1$-perturbation of $\psi_k$ has a critical point in $\DD$. Thus, choosing $r_0$ and $\de_0$ smaller if necessary so that $\bar\DD\subset \Xi\times\ml{M}$, we conclude that $\ti{\ml{F}}_\la$ has a critical point $(\xi_\la  , m_\la )$
in $\DD$, for all such small $\la$. 
Therefore
$$u_\la (x)  =  U(\xi_\la ,
m_\la) (x) + \ti\phi(\xi_\la  ,  m_\la ) (x)
=\sqrt{\la }\lf[ V(\xi_\la ,
m_\la) (x) + \phi(\xi_\la  ,  m_\la ) (x) \rg]$$
is a solution to our problem \equ{mt}. The qualitative properties of this
solution predicted by Theorem \ref{teo2} are direct consequence of
our construction. This concludes the proof.
\end{proof}

\medskip

\begin{proof}[{\bf Proof (of Theorem \ref{torusk2}):}] We shall apply the result of Theorem \ref{teo2} for the case $k=2$ with $S=T$ a flat two-torus in rectangular form given by \eqref{T}. In this case, it holds that the function $H(\xi,\xi)$ is constant. Notice that on $T$ we have invariance under translations, in other words, if $u$ is a solution to \eqref{mt} then $u(\cdot+p)$ is also a solution to \eqref{mt} for any $p\in T$. Furthermore, by the same property, it is know that the Green's function satisfies $G(\xi_1,\xi_2)=G(\xi_1-\xi_2,0)$. Hence, with a slightly abuse of notation we denote $G(z)$ by the Green's function $G(\cdot, 0)$ and we make the change of variables $z=\xi_1-\xi_2$. Thus, we are reduced to look for critical points of the functional
$$\ml{F}_\lambda (z , m) =4\pi -{\la|T|\over 2}+8\pi\la\psi_2(z,m) + \theta_\lambda(z, m ),$$
with
\begin{equation}\label{fi2}
\psi_2(z,m)=\,A\sum_{j=1}^2 m_j^2+2\sum_{j=1}^2 m_j^2\log m_j-8\pi m_1 m_2G(z).
\end{equation}
where $A$ is an absolute constant $A=\log 16-2-8\pi H(\xi ,\xi)$, and, it is sufficient first to find nondegenerate critical points $z$ of $\psi_2$ (for any $m=(m_1,m_2)$) and hence, to look for nondegenerate critical points $m$ of $\psi_2(z,\cdot)$ (for the latter $z$), so that $(z,m)$ are nondegenerate critical points of $\psi_2$, since this is a stable critical point situation. Therefore, there exist $\la_0>0$ such that for all $0<\la<\la_0$ there exist critical points $(\xi_\la,m_\la)$ of $\ml{F}_\la$. Let us stress that we can find critical points $z$ of $\psi_2$ independent of $m$, in view of \eqref{fi2}.


\begin{claim}
If $T$ is a rectangle then there exist exactly three nondegenerate critical points of $\psi_2(\cdot,m)$ for any $m=(m_1,m_2)$. They are the half periods of $T$: $p_1=\dfrac{a}{2}$, $p_2=\dfrac{\text{\em i}b}{2}$ (saddle points) and $p_3=\dfrac{a+\text{\em i}b}{2}$ (minimum point), with \em{i} the imaginary unity.
\end{claim}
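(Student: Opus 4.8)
The plan is to reduce everything to a statement about the Green function and then exploit the special two‑dimensional structure. Since $z\mapsto\psi_2(z,m)$ depends on $z$ only through the term $-8\pi m_1m_2G(z)$ with $m_1m_2>0$, the critical points of $\psi_2(\cdot,m)$ and their nondegeneracy coincide with those of $G$ (the factor $-8\pi m_1m_2<0$ only reverses the sign of the Hessian, sending saddles to saddles). So it suffices to prove that on the rectangular flat torus $T$, in the coordinate $z=x+\mathrm{i}y$, the function $G$ has exactly three critical points, the half‑periods $p_1=(a/2,0)$, $p_2=(0,b/2)$, $p_3=(a/2,b/2)$, with $p_1,p_2$ nondegenerate saddle points and $p_3$ the (nondegenerate, global) minimum of $G$.

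The key observation I would use is that, away from its pole $0$, $G$ solves $-\Delta G=-1/|T|$, so $\Delta G\equiv1/|T|$ is constant there and hence $\partial_xG$ and $\partial_yG$ are \emph{harmonic} on $T\setminus\{0\}$. The rectangular symmetry $G(z)=G(\bar z)=G(-\bar z)=G(z+\omega)$ makes $G$ invariant under the reflections across the four lines $\{x=0\}$, $\{x=a/2\}$, $\{y=0\}$, $\{y=b/2\}$; a fundamental domain is the quarter rectangle $Q=[0,a/2]\times[0,b/2]$, with the pole at the corner $0$ and the remaining corners exactly $p_1,p_2,p_3$. This symmetry forces $\partial_xG\equiv0$ on $\{x=0\}\cup\{x=a/2\}$ and $\partial_yG\equiv0$ on $\{y=0\}\cup\{y=b/2\}$ — in particular $\nabla G(p_j)=0$ and the Hessian of $G$ is diagonal at each $p_j$ — and differentiating the second relation along those lines gives $\partial_y(\partial_xG)\equiv0$ there, and symmetrically $\partial_x(\partial_yG)\equiv0$ on $\{x=0\}\cup\{x=a/2\}$. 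Moreover, since near $0$ one has $G(z)=-\tfrac1{2\pi}\log|z|+O(1)$ with smooth remainder, $\partial_xG=-\tfrac1{2\pi}\tfrac{x}{x^2+y^2}+O(1)$ is bounded above on $\overline Q$ near the corner $0$, and likewise for $\partial_yG$.

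With this in hand I would argue as follows. The function $u:=\partial_xG$ is harmonic in the interior of $Q$, vanishes on the two vertical edges, has zero normal derivative on the two horizontal edges, is bounded above near the corner $0$, and is not identically zero (indeed $u(x,0)\to-\infty$ as $x\to0^+$). Reflecting $u$ evenly across the two horizontal edges produces a harmonic function on the cylinder $(0,a/2)\times(\mathbb R/b\mathbb Z)$ which is $\le0$ on the boundary and bounded above near the single boundary point $0$; by the maximum principle $u\le0$, and by the strong maximum principle $u<0$ throughout $Q$ and on the two open horizontal edges. Symmetrically $v:=\partial_yG<0$ throughout $Q$ and on the two open vertical edges. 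Hence $\nabla G=(\partial_xG,\partial_yG)$ has no zero on $\overline Q\setminus\{0,p_1,p_2,p_3\}$, so by the reflection symmetry $G$ has exactly the three critical points $p_1,p_2,p_3$. To read off their type I would apply Hopf's boundary lemma to $u$ and $v$ — after the relevant even reflection each corner is an ordinary smooth Dirichlet boundary point: from $u=0$ on $\{x=a/2\}$ with $u<0$ on the inside we get $G_{xx}>0$ at $p_1$ and $p_3$, from $u=0$ on $\{x=0\}$ with $u<0$ inside we get $G_{xx}<0$ at $p_2$; similarly $G_{yy}>0$ at $p_2$ and $p_3$ and $G_{yy}<0$ at $p_1$. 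Since the Hessians are diagonal at the $p_j$, this shows $p_1$ ($G_{xx}>0>G_{yy}$) and $p_2$ ($G_{yy}>0>G_{xx}$) are nondegenerate saddles and $p_3$ ($G_{xx},G_{yy}>0$) is a nondegenerate local minimum, which is the global minimum since it is the only non‑saddle critical point and $G\to+\infty$ at the pole.

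The step I expect to require the most care is the maximum principle for $u=\partial_xG$ at the corner $0$: there $u$ is unbounded below and not even continuous, so $0$ is not a classical Dirichlet boundary value. The decisive point — and the reason the argument is tied to the rectangular geometry — is that $u$ is nevertheless bounded \emph{above} near $0$ (because $-x/(x^2+y^2)\le0$ for $x\ge0$), which makes $0$ a negligible boundary point, removable for the subharmonic function $u$ in the sense of Phragm\'en--Lindel\"of. Everything else should be routine: the harmonicity of $\partial_xG$ is special to dimension two with a constant background source, the clean Dirichlet/Neumann boundary data come directly from the mirror symmetries, and Hopf's lemma is applied only at ordinary smooth boundary points.
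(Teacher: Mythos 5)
Your argument is correct, but it takes a genuinely different — and far more detailed — route than the paper. The paper's proof of this claim is essentially a one-line citation: it invokes the known fact (attributed to \cite{CLW}, and ultimately tied to the explicit elliptic/theta-function description of $G$ on flat tori) that the Green function on a rectangular torus has exactly three nondegenerate critical points, the half periods, and then observes that critical points and their nondegeneracy transfer verbatim to $\psi_2(\cdot,m)$ because the $z$-dependence is the single term $-8\pi m_1 m_2 G(z)$ with $m_1 m_2>0$. You perform the same trivial reduction, but then give a self-contained PDE proof of the Green-function fact: harmonicity of $\partial_x G$ and $\partial_y G$ away from the pole, the rectangular reflection symmetries reducing matters to the quarter-period fundamental domain $Q$ with Dirichlet/Neumann data, a Phragm\'en--Lindel\"of-type maximum principle (the key is that $\partial_x G$ is bounded \emph{above} near the pole on $x\ge 0$, a point that is removable as a polar set in the plane — this is exactly where the rectangular geometry enters) to obtain strict negativity of each derivative in the interior of $Q$, and Hopf's boundary point lemma, applied after reflecting the corner to an interior boundary point, to read off the signatures of the (diagonal) Hessians. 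Your approach buys an elementary, from-scratch proof that makes visible why the claim is specific to rectangles (on skew tori there can be five critical points); the paper's approach buys brevity and links to the deeper Lin--Wang classification. One detail worth making explicit, which you correctly flag: the attributes ``saddle''/``minimum'' in the claim refer to $G$ itself, not to $\psi_2(\cdot,m)$ — the negative factor $-8\pi m_1 m_2$ converts the minimum of $G$ into a maximum of $\psi_2$ while leaving the saddles as saddles, so nondegeneracy is preserved but the labels are not.
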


\begin{proof}[\dem]  It is know that the Green's function $G$ has exactly three nondegenerate critical points $p_1$, $p_2$ and $p_3$, which are the half periods of $T$ given by \eqref{halph}, see \cite{CLW}. Hence, choosing $z=p_i$ for some $i=1,2,3$ we have nondegenerate critical points of $\psi_2(\cdot,m)$ in points $z$ for any $m$.
\end{proof}

\medskip

Now, let us look for critical points in $m=(m_1,m_2)$. Thus, we get
$$\fr_{m_i}\psi_2(z,m)=2(A+1)m_i+4m_i \log m_i-8\pi m_j G(z),\qquad i, j =1,2,\ \ j\ne i$$
and we have to find points $m_1,m_2\in(0,+\infty)$ solutions to the system
\begin{equation}\label{smk2}
\lf\{\begin{array}{rcl}
(A+1)m_1+2m_1 \log m_1 & = &4\pi m_2 G(z) \\[0.1cm]
(A+1)m_2+2m_2 \log m_2 & = &4\pi m_1G(z)
\end{array}\rg. .
\end{equation}
Let us stress that for each $z=p_i$, $i=1,2,3$ we look for $m$ a critical point of $\psi_2(p_i,\cdot)$.

\begin{claim}\label{essmk2}
If $G(z)\ge0$ then there exists an only solution $(m_1,m_2)$ to the system \eqref{smk2} and it satifies $m_1=m_2$. If $G(z) < 0$ then there exist exactly three different pairs of solutions $(m_1,m_2)$ to the system \eqref{smk2} in the form $(m_1,m_2)$ or $(m_0,m_0)$ or $(m_2,m_1)$ with some positive real numbers satisfying $m_1<m_0<m_2$.
\end{claim}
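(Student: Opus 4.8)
The plan is to analyze the system \eqref{smk2} by exploiting its symmetry under swapping $m_1 \leftrightarrow m_2$ and reducing the count of solutions to a one-variable monotonicity study. First I would set $c = 4\pi G(z)$ and $B = A+1$, and introduce the function $g(t) = Bt + 2t\log t$ for $t>0$, so that the system becomes $g(m_1) = c\,m_2$, $g(m_2) = c\,m_1$. Note $g$ is smooth on $(0,+\infty)$, $g(t) \to 0$ as $t\to 0^+$, $g(t)\to+\infty$ as $t\to+\infty$, and $g'(t) = B + 2\log t + 2 = (B+2) + 2\log t$, which vanishes exactly once, at $t_* = e^{-(B+2)/2}$; thus $g$ is strictly decreasing on $(0,t_*)$ and strictly increasing on $(t_*,+\infty)$, with a strict global minimum $g(t_*) = -2t_* = -2e^{-(B+2)/2} < 0$.

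Next I would treat the two regimes of $c$ separately. When $c \ge 0$ (the case $G(z)\ge 0$): along any solution, subtracting the two equations gives $g(m_1) - g(m_2) = c(m_2 - m_1)$, i.e. $g(m_1) - g(m_2) + c(m_1-m_2) = 0$. I would show the only possibility is $m_1 = m_2$: if, say, $m_1 > m_2$, then since $c \ge 0$ the term $c(m_1-m_2) \ge 0$, so we would need $g(m_1) \le g(m_2)$; but then from the first equation $g(m_1) = c m_2 \ge 0$ forces $m_1 \ge t_*$ (as $g < 0$ on a subinterval of $(0,t_*)$ only where... — more carefully, $g(m_1)\ge 0$ and the strict minimum is negative, so $m_1$ lies in the region... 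I would pin down that $g(m_1)\ge 0$ and $g$ increasing past $t_*$ forces $m_1$ large enough that $g$ is increasing there, contradicting $g(m_1)\le g(m_2)$ with $m_1>m_2$ unless both are $\le$ some threshold, which the sign constraint rules out). Cleaner: on the diagonal $m_1 = m_2 = m$ the system collapses to $g(m) = cm$, i.e. $Bm + 2m\log m = cm$, i.e. $\log m = (c - B)/2$, which has the unique solution $m = e^{(c-B)/2}$; and a short convexity/monotonicity argument (using that $h(t) := g(t)/t = B + 2\log t$ is strictly increasing) shows off-diagonal solutions cannot occur when $c\ge 0$, since $g(m_1) = cm_2$ and $g(m_2)=cm_1$ would give $h(m_1) m_1 = c m_2$ and $h(m_2)m_2 = cm_1$, hence $h(m_1)h(m_2) = c^2$ and $h(m_1)/h(m_2) = m_1/m_2 \cdot$ ... — the upshot being one eliminates $m_1\ne m_2$. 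This yields the first assertion.

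When $c < 0$ (the case $G(z) < 0$): here I expect three solutions, one on the diagonal and a symmetric pair off it. The diagonal solution is again $m = e^{(c-B)/2}$, unique as above. For the off-diagonal solutions, I would parametrize: from the first equation $m_2 = g(m_1)/c$, which (since $c<0$) requires $g(m_1) < 0$, i.e. $m_1 \in (t_1, t_2)$ where $t_1 < t_* < t_2$ are the two roots of $g(t) = 0$ (explicitly $g(t)=0 \iff \log t = -B/2$, so actually $g$ has a single positive root $e^{-B/2}$; I must recheck — $g(t) = t(B + 2\log t)$ vanishes only at $t = e^{-B/2}$, and $g < 0$ on $(0, e^{-B/2})$). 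So the off-diagonal solutions live with both $m_i \in (0, e^{-B/2})$. Substituting $m_2 = g(m_1)/c$ into the second equation gives a single equation $\Phi(m_1) := g(g(m_1)/c) - c\,m_1 = 0$ on $(0, e^{-B/2})$. I would study $\Phi$ on this interval: analyze its boundary behavior ($\Phi \to 0^{-}$ or a definite sign as $m_1 \to 0^+$ and as $m_1\to e^{-B/2}$), locate the diagonal root $m_1 = e^{(c-B)/2}$ inside it, and show via the derivative $\Phi'$ — computed through the chain rule using $g'$ and the known sign pattern of $g'$ — that $\Phi$ changes sign exactly twice more, producing exactly two off-diagonal roots $m_1 = \alpha$ and $m_1 = \beta$ with $\alpha < e^{(c-B)/2} < \beta$; by the swap symmetry these correspond to the pair $(m_1,m_2)$ and $(m_2,m_1)$ with $m_1 < m_0 < m_2$ where $m_0 = e^{(c-B)/2}$. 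Nondegeneracy of all three (needed for the stable critical point situation) follows from $\Phi'\ne 0$ at each root, after checking the Jacobian of \eqref{smk2} is nonsingular there.

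The main obstacle I anticipate is the sign-change count for $\Phi$ (equivalently, ruling out spurious extra off-diagonal solutions and confirming exactly two): the composition $g\circ(g/c)$ is not monotone, so one must carefully track where its derivative — a product of $g'$ evaluated at two related points — changes sign, and combine this with the linear term $-cm_1$. I would handle this by a careful case analysis on the interval $(0, e^{-B/2})$ relative to the critical point $t_*$ of $g$, using the strict convexity-type information $g'' (t) = 2/t > 0$ to control the shape, and by a degree/continuity argument to pin the total count at three. The ordering $m_1 < m_0 < m_2$ then comes for free from the location of the diagonal root between the two off-diagonal ones.
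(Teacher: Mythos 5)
Your proposal and the paper's proof share the same starting point (reduce to a one--variable analysis of $g(t)=(A+1)t+2t\log t$ and exploit the $(m_1,m_2)\leftrightarrow(m_2,m_1)$ symmetry), but then diverge, and the divergence leaves a genuine gap in the case $G(z)<0$.

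For $c:=4\pi G(z)<0$, you reduce the system to $\Phi(m_1):=g\big(g(m_1)/c\big)-c\,m_1=0$ on the interval where $g<0$ and claim $\Phi$ has exactly three roots. But you explicitly flag the sign--change count for $\Phi$ as ``the main obstacle'' and only outline a plan (``a degree/continuity argument to pin the total count at three'') without executing it. Since ``exactly three pairs'' \emph{is} the content of the claim, this amounts to asserting the conclusion rather than proving it: the composition $g\circ(g/c)$, with $g$ non--monotone, is precisely where the difficulty lives, and nothing in the writeup controls it. (You also do not verify that $g(m_1)/c$ actually lands back inside $(0,e^{-(A+1)/2})$, which is needed to apply the second equation on the stated interval.) The paper avoids this by dividing through by $B=4\pi G(z)$: it sets $f_0(t)=g(t)/B$, so that the system becomes $f_0(m_1)=m_2$, $f_0(m_2)=m_1$, i.e.\ the intersection of the graph $s=f_0(t)$ with its reflection $t=f_0(s)$ across the diagonal. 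Crucially, $f_0$ is strictly convex when $B>0$ and strictly concave when $B<0$, so the sign of $G(z)$ changes the \emph{shape}; for $B<0$ strict concavity together with $f_0(0)=0$, a positive interior maximum, and $f_0\to-\infty$ yields exactly three intersection points, one on the diagonal and a symmetric pair off it. Your unscaled $g$ has the same shape for every $c$, so this convexity dichotomy --- which is what makes the count tractable --- is lost and you are pushed into the harder composition analysis.

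Your $c\ge 0$ argument is also left unfinished (you stop at ``the upshot being one eliminates $m_1\ne m_2$''), but here the idea via $h(t)=g(t)/t=(A+1)+2\log t$ strictly increasing is sound and closes quickly: if $c>0$ and $m_1>m_2$, then $h(m_1)=c\,m_2/m_1<c<c\,m_1/m_2=h(m_2)$, contradicting monotonicity of $h$; if $c=0$ both equations force $h(m_i)=0$, giving $m_1=m_2=e^{-(A+1)/2}$. So that half is fixable in a line, whereas the $c<0$ half is the real gap. A minor caution: you reuse the letter $B$ for $A+1$ while the paper's proof uses $B=4\pi G(z)$, which is a different quantity; keeping them straight matters because the paper's whole mechanism is the sign flip of its $B$.
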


\begin{proof}[\dem]
To this aim, denote $B=4\pi G(z)$ and first assume that $B\ne 0$. Consider the function
\begin{equation}\label{f0}
\ds f_0(t)={A+1\over B} t+{2\over B} t\log t
\end{equation}
so that we re-write \eqref{smk2} in the form
\begin{equation}\label{smk2b}
\lf\{\begin{array}{rcl}
f_0( t) & = & s \\[0.1cm]
f_0( s) & = & t
\end{array}\rg..
\end{equation}
Thus, we look for the intersection points between the two curves $s=f_0( t)$ and $t=f_0( s)$ in the plane $ts$. Note that $f_0$ satisfies $f_0(0)=0$, its derivative is $\ds f'_0(t)={A+1\over B} +{2\over B} \log t+{2\over B}$ and hence, $f'_0$ is strictly increasing if $B>0$ and strictly decreasing if $B<0$, so that $f_0$ is strictly convex if $B>0$ and strictly concave in $[0,+\infty)$ if $B>0$ and
$$f'_0(t)\to \begin{cases}
-\infty,&B>0\\
+\infty, &B<0\end{cases}\quad\text{ as $t\to 0^+\ $ and } \ f'_0(t)\to \begin{cases}
+\infty,&B>0\\
-\infty, &B<0\end{cases}\quad\text{ as $t\to +\infty$}.$$

Now, assume that $B> 0 $, namely, $G(z)>0$. From the previous analysis, $f_0$ satisfies that $f(0)=0$, $f_0$ is strictly convex in $[0,+\infty)$, $f'_0(t)\to -\infty$ as $t\to 0^+$ and $f'_0(t)\to +\infty$ as $t\to+\infty$. Furthermore, it is readily check that $f_0$ is strictly decreasing in $[0, e^{-(A+3)/2}]$, its graph is below the axis $t$ in $[0, e^{-(A+3)/2}]$ ($f_0$ is negative), it is strictly increasing in $[e^{-(A+3)/2},+\infty)$ and it has a minimum at $t=e^{-(A+3)/2}$. Therefore, there is a unique $m_0=m_0(B)>0$ such that $f_0(m_0)=m_0$, namely, $m_0$ is the only fixed point of $f_0$, it satisfies
$$(A+1)m_0+2m_0\log m_0=Bm_0$$
and $(m_0,m_0)$ is the only solution to system \eqref{smk2}, since reflecting the curve $s=f_0(t)$ with respect to the line $s=t$ we obtain the curve $t=f_0(s)$ and $(m_0,m_0)$ is the only point of intersection of the curves.

On the other hand, assume that $B<0$, namely, $G(z)<0$. Then, $f_0$ satisfies that $f(0)=0$, $f_0$ is strictly increasing in $[0, e^{-(A+3)/2}]$, its graph is above the diagonal $s=t$ in $[0, e^{-(A+3)/2}]$, it is strictly decreasing in $[e^{-(A+3)/2},+\infty)$, it has a maximum at $t=e^{-(A+3)/2}$, it is strictly concave in $(0,+\infty)$, $f_0(t)\to -\infty$ as $t\to+\infty$, $f'_0(t)\to +\infty$ as $t\to 0^+$ and $f'_0(t)\to -\infty$ as $t\to+\infty$. Notice that reflecting the curve $s=f_0(t)$ with respect to the line $s=t$ we obtain the curve $t=f_0(s)$. Hence, we deduce that there are three distinct points of intersection between the curves $s=f_0( t)$ and $t=f_0( s)$. Therefore, there is a unique $m_0=m_0(B)>0$ such that $f_0(m_0)=m_0$ and two more distinct solutions  $(m_1,m_2)$ and $(m_2,m_1)$ to the system \eqref{smk2} with $m_1<m_0<m_2$ and satisfying $f_0(f_0(m_1))=m_1$ and $f_0(f_0(m_2))=m_2$, namely, with $m_1$ and $m_2$ are fixed points of $f_0\circ f_0$ (but $f_0(m_1)\ne m_1$ and $f_0(m_2)\ne m_2$). Also, it follows that $m_1<e^{-(A+3)/2}<m_0$. Let us stress that $m_0$ is a fixed point of $f_0$ and $m_1$ and $m_2$ are fixed points of $f_0\circ f_0$ but not of $f_0$.

Now, for the case $B=0$, system \eqref{smk2} is reduced to solve the equation
$$ (A+1)t+2t\log t=0\iff t\lf[ A+1 +2\log t\rg]=0.$$
Since we look for solutions $m_1,m_2> 0$, we get that $m_1=m_2=e^{-\frac{A+1}{2}}$ is the only solution to the system \eqref{smk2} with $G(z)=0$.

The proof of the claim is finished.
\end{proof}

\medskip
 To conclude that a critical point $(m_1,m_2)$ of $\psi_2(z,\cdot)$, namely, a solution to the system \eqref{smk2} is nondegenerate we study its Hessian matrix. Notice that if $G(z) =-\dfrac{1}{4\pi}$ then the system \eqref{smk2} has three pairs of solutions in the form $(m_1,m_2)$ or $(m_0,m_0)$ or $(m_2,m_1)$ with some positive real numbers $m_1<m_0<m_2$.

\begin{claim}
If $G(z)\ne -\dfrac{1}{4\pi}$ then all the solutions $(m_1,m_2)$ to the system \eqref{smk2} are nondegenerate critical points of $\psi_2(z,\cdot)$. If $G(z)= -\dfrac{1}{4\pi}$ then the pairs of solutions $(m_1,m_2)$ and $(m_2,m_1)$ to the system \eqref{smk2} are nondegenerate critical points of $\psi_2(z,\cdot)$ and $(m_0,m_0)$ is degenerate.
\end{claim}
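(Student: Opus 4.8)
The plan is to read off the Hessian of the map $m\mapsto\psi_2(z,m)$ at a critical point, compute its determinant with the help of the equations \eqref{smk2}, and determine exactly when it vanishes. Write $B:=4\pi G(z)$, so that \eqref{smk2} reads $(A+1)m_i+2m_i\log m_i=Bm_j$ for $\{i,j\}=\{1,2\}$. Differentiating \eqref{fi2} once gives $\partial_{m_i}\psi_2(z,m)=2\big[(A+1)m_i+2m_i\log m_i-Bm_j\big]$, hence
\begin{equation*}
D^2_m\psi_2(z,m)=2\begin{pmatrix}(A+3)+2\log m_1 & -B\\[2pt] -B & (A+3)+2\log m_2\end{pmatrix}.
\end{equation*}
At a critical point, dividing the $i$-th equation of \eqref{smk2} by $m_i$ yields $(A+1)+2\log m_i=Bm_j/m_i$, so $(A+3)+2\log m_i=Bm_j/m_i+2$; substituting, the $B^2$ contributions cancel and
\begin{equation*}
\det D^2_m\psi_2(z,m)=4\Big[\Big(\tfrac{Bm_2}{m_1}+2\Big)\Big(\tfrac{Bm_1}{m_2}+2\Big)-B^2\Big]=8\Big[B\Big(\tfrac{m_1}{m_2}+\tfrac{m_2}{m_1}\Big)+2\Big].
\end{equation*}
Thus $m$ is a nondegenerate critical point of $\psi_2(z,\cdot)$ if and only if $B\big(\tfrac{m_1}{m_2}+\tfrac{m_2}{m_1}\big)\ne-2$.

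For the diagonal solution $m_1=m_2=m_0$ of Claim \ref{essmk2}, this condition reads $16(B+1)\ne 0$, i.e. $B\ne-1$, i.e. $G(z)\ne-\tfrac1{4\pi}$. Hence $(m_0,m_0)$ is nondegenerate whenever $G(z)\ge 0$ (there $B\ge 0$) and whenever $G(z)<0$ with $G(z)\ne-\tfrac1{4\pi}$, while for $G(z)=-\tfrac1{4\pi}$ the determinant vanishes and $(m_0,m_0)$ is degenerate. This settles the diagonal solution.

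For the off-diagonal solutions we are necessarily in the case $B<0$, and by the symmetry $(m_1,m_2)\leftrightarrow(m_2,m_1)$ --- which conjugates the Hessian by the transposition matrix and hence preserves its determinant --- it suffices to treat the solution with $m_1<m_2$; set $\alpha:=m_2/m_1>1$. The one genuine difficulty is that $m_1,m_2$ are not available in closed form; this is circumvented by subtracting the two equations of \eqref{smk2} (each divided by the corresponding $m_i$), which gives the relation $-2\log\alpha=B(\alpha-\alpha^{-1})$. Multiplying the determinant formula by $\alpha-\alpha^{-1}>0$ and using this relation to eliminate $B$ yields
\begin{equation*}
\tfrac18\big(\alpha-\alpha^{-1}\big)\det D^2_m\psi_2(z,m)=2\Big[\big(\alpha-\alpha^{-1}\big)-\log\alpha\,\big(\alpha+\alpha^{-1}\big)\Big],
\end{equation*}
so $(m_1,m_2)$ is degenerate if and only if $\log\alpha=\dfrac{\alpha-\alpha^{-1}}{\alpha+\alpha^{-1}}=\dfrac{\alpha^2-1}{\alpha^2+1}$.

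To conclude I would rule this out for every $\alpha>1$: setting $\Phi(\alpha):=\log\alpha-\dfrac{\alpha^2-1}{\alpha^2+1}$ one has $\Phi(1)=0$ and
\begin{equation*}
\Phi'(\alpha)=\frac1\alpha-\frac{4\alpha}{(\alpha^2+1)^2}=\frac{(\alpha^2+1)^2-4\alpha^2}{\alpha(\alpha^2+1)^2}=\frac{(\alpha^2-1)^2}{\alpha(\alpha^2+1)^2}>0\qquad\text{for }\alpha>1,
\end{equation*}
whence $\Phi(\alpha)>0$, i.e. $\log\alpha>\dfrac{\alpha^2-1}{\alpha^2+1}$, for all $\alpha>1$. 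Therefore the degeneracy condition is never met, so both off-diagonal solutions are nondegenerate critical points of $\psi_2(z,\cdot)$ for every $B<0$ --- in particular also when $G(z)=-\tfrac1{4\pi}$. Combined with the diagonal case, this proves the claim. The only step demanding care is the reduction of the off-diagonal case to the above one-variable inequality; once the $B$-dependence has been removed by subtracting the two critical point equations, everything is elementary.
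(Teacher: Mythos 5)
Your argument is correct, and the first half coincides with the paper's: you compute the same Hessian determinant $\det D^2_m\psi_2(z,m)=8[B(\tfrac{m_1}{m_2}+\tfrac{m_2}{m_1})+2]$ with $B=4\pi G(z)$, and you handle the diagonal solution $(m_0,m_0)$ identically by observing that nondegeneracy fails exactly when $B=-1$.

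For the off-diagonal solutions, however, your reduction is genuinely different and noticeably cleaner than the paper's. The paper splits this into three subcases by the sign of $|H_m\psi_2|$ relative to $64\pi G(z)+16$, and in the hardest regime $-\tfrac{1}{4\pi}<G(z)<0$ it appends the degeneracy constraint $Bt^2+2ts+Bs^2=0$ to the two critical-point equations, solves the resulting system explicitly for $t,s$ (which forces taking square roots of $1-B^2$), and reduces to the transcendental equation $B\exp(\sqrt{1-B^2})+\sqrt{1-B^2}+1=0$, ruled out by a concavity/tangent-line argument for $g(t)=\sqrt{1-t^2}\,e^t$. Your route instead subtracts the two scaled critical-point equations to obtain $-2\log\alpha=B(\alpha-\alpha^{-1})$ with $\alpha=m_2/m_1>1$; this eliminates $B$ from the determinant and converts the degeneracy condition into the single inequality $\log\alpha=\tfrac{\alpha^2-1}{\alpha^2+1}$, which fails for every $\alpha>1$ by the elementary monotonicity computation $\Phi'(\alpha)=\tfrac{(\alpha^2-1)^2}{\alpha(\alpha^2+1)^2}>0$. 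This treats all $B<0$ in one stroke (no case-splitting) and avoids explicit closed-form solutions entirely, at the cost of a slightly less direct route to the sign of the determinant. As a bonus, your computation actually shows $\det D^2_m\psi_2<0$ at every off-diagonal critical point, i.e., they are all saddle points, which is finer information than the paper extracts.
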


\begin{proof}[\dem]
Direct computations lead us to find the determinant of the Hessian matrix as
\begin{equation*}
\begin{split}
\lf|H_m\psi_2(z,m)\rg| 
=& \,\Big(8\pi \dfrac{m_2}{m_1} G(z)+4\Big)\Big(8\pi \dfrac{m_1}{m_2}G(z)+4\Big)-64[\pi G(z)]^2\\[0.2cm]
=&\, 32\pi \Big( \dfrac{m_2}{m_1} + \dfrac{m_1}{m_2}\Big) G(z)+16
\end{split}
\end{equation*}
by using that $(m_1,m_2)$ satisfies \eqref{smk2}. Therefore, if $2\pi \Big( \dfrac{m_2}{m_1} + \dfrac{m_1}{m_2}\Big) G(z)\ne -1$ then $m=(m_1,m_2)$ is a nondegenerate critical point of $\psi_2(z,\cdot)$ (for fixed $z$). In particular, if $G(z)\ge 0$ then we have an only nondegenerate critical point $m=(m_0,m_0)$, with $m_0$ the only fixed point of $f_0$ defined by \eqref{f0}, when $B>0$ or $m_0=e^{-{A+1\over 2}}$ when $B=0$, since $\lf|H_m\psi_2(z,m)\rg|\ge16$. In case $G(z)<0$ we have at most three nondegenerate critical points. More precisely, we have several cases readily checked:
\begin{itemize}
\item $m=(m_0,m_0)$, with $m_0$ the only fixed point of $f_0$ defined by \eqref{f0}, is a nondegenerate critical point of $\psi_2(z,\cdot)$ only if $G(z)\ne -\dfrac{1}{4\pi}$ in view of $\lf|H_m\psi_2(z,m)\rg|=64\pi G(z)+16\ne 0$; in other words, if $G(z)< -\dfrac{1}{4\pi}$ then $\lf|H_m\psi_2(z,m)\rg|=64\pi G(z)+16< 0$ and if $ -\dfrac{1}{4\pi}<G(z)<0$ then $\lf|H_m\psi_2(z,m)\rg|=64\pi G(z)+16> 0$;

\item if $G(z)=-\dfrac{1}{4\pi}$ then on one hand, $(m_0,m_0)$, with $m_0$ the only fixed point of $f_0$ defined by \eqref{f0}, is a degenerate critical point of $\psi_2(z,\cdot)$, in view of $\lf|H_m\psi_2(z,m)\rg|=64\pi G(z)+16= 0$, and on the other hand, both $(m_1,m_2)$ and $(m_2,m_1)$, where $m_1$ and $m_2$ are the fixed points of $f_0\circ f_0$ different than $m_0$ the fixed point of $f_0$, with $m_1<m_0<m_2$, are nondegenerate critical points of $\psi_2(z,\cdot)$ in view of $m_1\ne m_2$, so that, $$2\pi \Big( \dfrac{m_2}{m_1} + \dfrac{m_1}{m_2}\Big) G(z)=-\dfrac{1}{2} \Big( \dfrac{m_2}{m_1} + \dfrac{m_1}{m_2}\Big) \ne -1;\quad\text{ and}$$

\item either $(m_1,m_2)$ or $(m_2,m_1)$, with $m_1$ and $m_2$ the fixed points of $f_0\circ f_0$ different than $m_0$ the fixed point of $f_0$ defined by \eqref{f0}, with $m_1<m_0<m_2$, are nondegenerate critical points of $\psi_2(z,\cdot)$ if $G(z) < -\dfrac{1}{4\pi}$ in view of $\lf|H_m\psi_2(z,m)\rg|\le 64\pi G(z)+16<0$, since $\dfrac{m_2}{m_1} + \dfrac{m_1}{m_2}\ge 2$.

\end{itemize}

Notice that it remains to analyze the critical points $(m_1,m_2)$ or $(m_2,m_1)$, with $m_1$ and $m_2$ the fixed points of $f_0\circ f_0$ different than $m_0$ the fixed point of $f_0$ defined by \eqref{f0} when $-\dfrac{1}{4\pi }<G(z)<0$.  To this aim, we shall use the equation that must satisfy degenerate critical point of $\psi_2(z,\cdot)$. Thus, we get that
$$2\pi \Big( \dfrac{m_2}{m_1} + \dfrac{m_1}{m_2}\Big) G(z)+1=0 \iff 4\pi G(z) m_1^2+2m_1m_2+4\pi G(z)m_2^2=0.$$
Recall that $B=4\pi G(z)$, so that, $-1<B<0$ when $-\dfrac{1}{4\pi }<G(z)<0$. So, we re-write the latter equality in the form $Bt^2+2ts+Bs^2 = 0$ and we consider the system
\begin{equation}\label{smk2c}
\lf\{\begin{array}{rcl}
(A+1) t+2 t\log t & = & Bs \\[0.1cm]
(A+1) s+2 s\log s & = & Bt \\[0.1cm]
Bt^2+2ts+Bs^2& = &0
\end{array}\rg..
\end{equation}
Let us show that this system does not have any solution, so that the critical points $(m_1,m_2)$ or $(m_2,m_1)$ are nondegenerate. Assume that $(t,s)$ is a solution of the system \eqref{smk2c}. From the third equation it follows that 
either $Bt+\lf(1 + \sqrt{1-B^2}\rg)s=0$ or $Bt+\lf(1 - \sqrt{1-B^2}\rg)s=0$. First, assume that $Bt+\lf(1 - \sqrt{1-B^2}\rg)s=0$. Hence, we have that
$$Bt=-\lf(1 - \sqrt{1-B^2}\rg)s\iff Bs=-\lf(1 + \sqrt{1-B^2}\rg)t.$$
So, replacing $Bt$ in the second equation of the system \eqref{smk2c} and since $s\ne 0$, we get that
 $$s =\exp\lf(-\Big[A+2-\sqrt{1-B^2}\Big]/2\rg)\quad\text{ and } \quad t ={-1+\sqrt{1-B^2} \over B} \exp\lf(-\Big[A+2-\sqrt{1-B^2}\Big]/2\rg).$$
On the other hand, similarly as above replacing $Bs$ in the first equation of the system \eqref{smk2c} we get that
$$t=\exp\lf(-\Big[A+2+\sqrt{1-B^2}\Big]/2\rg)\quad\text{and}\quad
s ={-1-\sqrt{1-B^2} \over B} \exp\lf(-\Big[A+2+\sqrt{1-B^2}\Big]/2\rg).$$
If $(t,s)$ is a solution of the system \eqref{smk2c} then necessarily
$$s=\exp\lf(-\Big[A+2-\sqrt{1-B^2}\Big]/2\rg)  ={-1- \sqrt{1-B^2} \over B} \exp\lf(-\Big[A+2+\sqrt{1-B^2}\Big]/2\rg).$$
From this equality we obtain that
\begin{align}\label{eqB}
B\exp\lf(\sqrt{1-B^2} \rg)+ \sqrt{1-B^2} + 1&=0.
\end{align}
Performing the change of variable $t=\sqrt{1-B^2}$, we have that $0<t<1$, $B=-\sqrt{1-t^2}$ and $t$ satisfies the equation
$$-\sqrt{1-t^2} e^t+t+1=0.$$
It turns out that the function $g(t)=\sqrt{1-t^2} e^t$ satisfies $g(0)=1$, $g'(0)=1$,
 $g$ is strictly concave and its tangent line at $t=0$ is exactly $s=t+1$. Therefore, $g(t)<t+1$ for all $0<t<1$ and the equality is attained at $t=0$. In other words, there is no $B$ with $-1<B<0$ satisfying the equation \eqref{eqB}. In case $Bt+(1+\sqrt{1-B^2})s=0$, a similar analysis lead us to find the equation \eqref{eqB}. Thus, in any case we arrive at a contradiction and the system \eqref{smk2c} has no solutions. Therefore, if $G(z)\ne -\dfrac{1}{4\pi}$ then all the critical points are nondegenerate. This finishes the proof of the claim.
\end{proof}

From the previous result it remains to study the case $G(z)=-\dfrac{1}{4\pi}$ for some $z\in\{p_1,p_2,p_3\}$, since we have that there exist three critical points of $\psi_2$ but one of them is degenerate. We shall address this difficulty by study directly the functional $\ml{F}_\la$ in that case.

\begin{claim}\label{claim3.8}
Assume that $G(z)=-\dfrac{1}{4\pi}$ for some fixed $z\in\{p_1,p_2,p_3\}$, then there exist three critical points $(z_{\la,i},m_{\la,i})$ $i=1,2,3$, of $\ml{F}_\la(z,m)$ such that, up to subsequences, as $\la\to0$ $z_{\la,i}\to z$ and $m_{\la,i}\to m_{0,i}$ with $m_{0,1}=(m_1,m_2)$, $m_{0,2}=(m_0,m_0)$ and $m_{0,3} =(m_2,m_1)$ the three different pairs of solutions to the system \eqref{smk2} obtained in Claim \ref{essmk2}.
\end{claim}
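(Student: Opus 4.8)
The plan is to produce the three required critical points of $\ml{F}_\la$ by splitting them into two groups: the two attached to the off‑diagonal weights $m_{0,1}=(m_1,m_2)$ and $m_{0,3}=(m_2,m_1)$, which are nondegenerate and can be handled by the same mechanism as in Theorem \ref{teo2}, and the one attached to the diagonal weight $m_{0,2}=(m_0,m_0)$, which is the degenerate one and needs a separate, symmetry‑based argument. Throughout I use the preceding claims: since $G(z)=-\tfrac1{4\pi}<0$, the system \eqref{smk2} has exactly the three solutions $m_{0,1},m_{0,2},m_{0,3}$ with $m_1<m_0<m_2$, the pairs $m_{0,1}$ and $m_{0,3}$ are nondegenerate critical points of $\psi_2(z,\cdot)$, and $m_{0,2}$ is degenerate.

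For the two nondegenerate pairs I would first upgrade nondegeneracy in $m$ to nondegeneracy in the full variable $(\zeta,m)$. Since $\psi_2(\zeta,m)=A(m_1^2+m_2^2)+2(m_1^2\log m_1+m_2^2\log m_2)-8\pi m_1m_2 G(\zeta)$ depends on $\zeta$ only through $G$, one has $\partial_\zeta\psi_2=-8\pi m_1m_2\,\grad G(\zeta)$, and the mixed second derivatives $\partial_{m_l}\partial_\zeta\psi_2$ are proportional to $\grad G(\zeta)$. At $\zeta=z\in\{p_1,p_2,p_3\}$ we have $\grad G(z)=0$, and $D^2G(z)$ is nonsingular because the half‑periods are nondegenerate critical points of $G$ (see \cite{CLW}); hence the Hessian of $\psi_2$ at $(z,m_{0,1})$ and at $(z,m_{0,3})$ is block diagonal with both the $\zeta$‑block and the $m$‑block nonsingular, i.e. these are nondegenerate critical points of $\psi_2$. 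A nondegenerate critical point is a stable critical point situation, so the argument in the proof of Theorem \ref{teo2} — using the $C^1$ expansion of Proposition \ref{fullexpansionenergy} — yields, for all small $\la$, critical points $(z_{\la,1},m_{\la,1})$ and $(z_{\la,3},m_{\la,3})$ of $\ml{F}_\la$ in fixed small neighborhoods of $(z,m_{0,1})$ and $(z,m_{0,3})$; along a subsequence they converge to those two points.

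The essential difficulty is the degenerate pair $m_{0,2}=(m_0,m_0)$, where the Hessian of $\psi_2$ is singular along $(0,0,1,-1)$ and the stable‑critical‑point machinery does not apply directly. The idea is to use the $\mathbb{Z}_2$ symmetry of $\ml{F}_\la$ coming from interchanging the two bubbles: relabelling $(m_1,\xi_1)\leftrightarrow(m_2,\xi_2)$ leaves the approximation $V$ unchanged, and by uniqueness in the Lyapunov–Schmidt reduction it leaves the reduced energy unchanged as well, so in the variables $(z,m)=(\xi_1-\xi_2,(m_1,m_2))$ one has $\ml{F}_\la(z,m_1,m_2)=\ml{F}_\la(-z,m_2,m_1)$. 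On the rectangular torus $2p_j$ lies in the period lattice, hence $-p_j\equiv p_j$; writing $z=p_i+w$, this shows $\ml{F}_\la$ is invariant under the involution $\sigma(w,m_1,m_2)=(-w,m_2,m_1)$, whose fixed‑point set is the line $\{(p_i,t,t):t>0\}$. I would then restrict to this line and look for a critical point of $g_\la(t):=\ml{F}_\la(p_i,t,t)$ near $m_0$. By Proposition \ref{fullexpansionenergy}, $\widetilde g_\la(t):=\tfrac1{8\pi\la}\big[g_\la(t)-4\pi+\tfrac{\la|T|}2\big]$ is $C^1$‑close, uniformly near $m_0$, to $\psi_2(p_i,t,t)=2\big(A-4\pi G(p_i)\big)t^2+4t^2\log t$, whose derivative $4t\big[(A+1)+2\log t-4\pi G(p_i)\big]$ vanishes (for $t>0$) exactly at $t=m_0$ — which is precisely the fixed‑point equation for $f_0$ — and whose second derivative there equals $8>0$. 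Thus $t\mapsto\psi_2(p_i,t,t)$ has a nondegenerate strict local minimum at $m_0$, even though $\psi_2$ is degenerate transversally to the diagonal, so $\widetilde g_\la$ has a critical point $t_\la\to m_0$. Because $\ml{F}_\la$ is $\sigma$‑invariant and $(p_i,t_\la,t_\la)$ lies in $\mathrm{Fix}(\sigma)$, the point $(z_{\la,2},m_{\la,2}):=(p_i,t_\la,t_\la)$ is automatically a critical point of the full functional $\ml{F}_\la$, and it converges to $(z,m_{0,2})$. Finally, Lemma \ref{cpfc0bis} turns each of the three points $(z_{\la,i},m_{\la,i})$ into a genuine solution of \eqref{mt}.

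The step I expect to be the main obstacle is exactly the degenerate weight $m_{0,2}$: one cannot invoke ``nondegenerate $\Rightarrow$ stable critical point situation'', and a brute‑force refined expansion of $\ml{F}_\la$ along the bad direction would be delicate because Proposition \ref{fullexpansionenergy} only controls $\theta_\la$ together with its first derivatives. What makes the argument cheap is that the degenerate direction in $m$ is precisely the one broken by the bubble‑interchange symmetry: restricting to the symmetric locus removes the bad direction, the resulting one‑variable problem is genuinely nondegenerate, and the symmetry principle guarantees that the point found is critical for the full energy. The only mild checks needed along the way are that the bubble interchange really is a symmetry of $\ml{F}_\la$ and not merely of its leading term $\psi_2$ (this is where uniqueness in the reduction enters), and that $-p_j\equiv p_j$ on $T$.
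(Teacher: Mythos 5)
Your proposal is correct and takes a genuinely different route from the paper. The paper performs a two-step finite-dimensional reduction: it first solves $\nabla_z\ti{\ml F}_\la(z,m)=0$ for $z=z_\la(m)$ using stability of the nondegenerate critical point $p_j$ of $\psi_2(\cdot,m)$ and the implicit function theorem, and then seeks critical points of $\ml E_\la(m)=\ti{\ml F}_\la(z_\la(m),m)$ by tracking the three intersection points of the perturbed curves in the $(m_1,m_2)$-plane; the persistence of the degenerate intersection at $(m_0,m_0)$ (where $f_0'(m_0)=-1$ makes the two curves tangent along the antidiagonal) is asserted via a uniform-convergence-of-curves argument that is left informal. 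Your treatment splits the three weights. For $m_{0,1},m_{0,3}$ you observe that at a half-period the mixed $\partial_\zeta\partial_m$-block of $D^2\psi_2$ vanishes because $\nabla G(p_j)=0$, so the full Hessian is block diagonal with invertible $\zeta$- and $m$-blocks; thus $(p_j,m_{0,1})$ and $(p_j,m_{0,3})$ are nondegenerate critical points of $\psi_2$ outright, and a single application of the stable-critical-point argument produces the corresponding $(z_{\la,i},m_{\la,i})$. For the degenerate $m_{0,2}=(m_0,m_0)$ you make explicit the $\mathbb Z_2$ bubble-interchange symmetry $\ml F_\la(z,m_1,m_2)=\ml F_\la(-z,m_2,m_1)$, which is genuine at the level of $\ml F_\la$ (not merely of $\psi_2$) because the swap leaves $V$ unchanged (both $\e_j$ and the $\mu_j$ of \eqref{muj} transform equivariantly) and the uniqueness in Proposition \ref{lpnlabis} transports the symmetry to $\phi$ and hence to the reduced energy; combined with $-p_j\equiv p_j$ on the lattice, Palais' symmetric criticality reduces the problem to the one-variable function $t\mapsto\ti{\ml F}_\la(p_j,t,t)$, whose leading part $\psi_2(p_j,t,t)=2(A-4\pi G(p_j))t^2+4t^2\log t$ has a strict nondegenerate minimum at $m_0$ with second derivative $8>0$, so a critical point $t_\la\to m_0$ is obtained from the $C^1$-expansion of Proposition \ref{fullexpansionenergy}. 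What the two approaches buy: the paper's route is uniform over the three weights and avoids invoking the symmetry explicitly, but its justification of persistence at the tangential intersection is arguably the weakest point; your route costs a case split and the verification that the swap is an exact symmetry of $\ml F_\la$, but in return it isolates precisely the mechanism (the degenerate direction $(0,0,1,-1)$ is the one broken by the symmetry) that makes the degenerate critical point survive, giving a cleaner and more transparent argument for the delicate pair.
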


\begin{proof}[\dem] We know that $z$ is a nondegenerate critical point of $\psi_2(\cdot,m)$. This is a stable critical point situation so that for each $m\in (0,+\infty)\times(0,+\infty)$ there exist a critical point $z_\la(m)\in T$ of $\ti{\ml{F}}_\la(\cdot,m)$ with
$$\ti{\ml{F}}_\la(z,m)={1\over 8\pi\la} \lf[\ml{F}_\la(z,m)-4\pi+{\la |T| \over 2} \rg],$$
so that $z_\la(m)\to z$ as $\la \to 0$. Let us stress that $\ti{\ml{F}}_\la(z,m)= \psi_2(z,m)+O(\la|\log\la|),$ where $O$ is uniformly in $C^1$-sense for points in $T\times \Xi$. Moreover, by IFT the map $m\in\Xi\mapsto z_\la(m)$ is a $C^1$-function in $m$. Now, let us define $\ml{E}_\la(m)=\ti{\ml{F}}_\la\big(z_\la(m),m\big)$. Then, it readily follows that
\begin{align*}
\grad_m\ml{E}_\la(m)
&=\nabla_m\ti{\ml{F}}_\la\big(z_\la(m),m\big)=\nabla_m\psi_2\big(z_\la(m),m\big)+ O(\la|\log\la|),
\end{align*}
since $\nabla_z\ti{\ml{F}}_\la\big(z_\la(m),m\big)=0$. 
Hence, we look for critical points of $\ml{E}$ by study the system $\grad_m\ml{E}(m)=0$. This is equivalent to finding solutions to the perturbation of the system \eqref{smk2} given by
\begin{equation}\label{psmk2}
\lf\{\begin{array}{rcl}
(A+1)m_1+2m_1 \log m_1 -4\pi m_2 G\big(z_\la(m_1,m_2)\big) +\la|\log\la| \: \ti\theta_{\la,1}(m_1,m_2)& = &0 \\[0.1cm]
(A+1)m_2+2m_2 \log m_2  -4\pi m_1G\big(z_\la(m_1,m_2)\big) +\la|\log\la|\: \ti\theta_{\la,2}(m_1,m_2)& = &0
\end{array}\rg. ,
\end{equation}
where it holds $\ti\theta_{\la,i}=O(1)$, $i=1,2$ uniformly for $m\in\Xi$ and $4\pi G\big(z_\la(m_1,m_2)\big)\to -1$ as $\la\to 0$. Since system \eqref{smk2} with $4\pi G(z)=-1$ has exactly three different pairs of solutions, as proved in Claim \ref{essmk2}, it follows that for $\la$ small enough there are at least three different pairs $m_{\la,i}$ of solutions to \eqref{psmk2} such that as $\la\to 0$ converge, up to a subsequence, to a solution to \eqref{smk2}. Let us stress that we can consider the curves (or the implicit functions)
$$4\pi m_jG(z_\la(m_1,m_2))=(A+1)m_i+2m_i \log m_i +\la|\log\la| \: \ti\theta_{\la,i}(m_1,m_2)$$ for $i\ne j$ converging uniformly to the curves (or the implicit functions)
$$-m_j =(A+1)m_i+2m_i \log m_i $$
 for $i\ne j$ locally around each $m_0$, $m_1$ and $m_2$ in order to obtain the existence of the pairs $m_{\la,i}$, $i=1,2,3$. Thus, we conclude that $m_{\la,i}$ $i=1,2,3$ are critical points of $\ml{E}_\la$, namely, $\grad_m \ml{E}(m_{\la,i})=0$ and $m_{\la,i}\to m_{0,i}$ with $m_{0,1}=(m_1,m_2)$, $m_{0,2}=(m_0,m_0)$ and $m_{0,3} =(m_2,m_1)$ the three different pairs of solutions to the system \eqref{smk2} obtained in Claim \ref{essmk2}. By the procedure it follows that $\big(z_\la(m_{\la,i}),m_{\la,i}\big)$, $i=1,2,3$ are critical points of $\ti{\ml{F}}_\la$, in view, of
$$\grad_z\ti{\ml{F}}_\la\big(z_{\la}(m_{\la,i}), m_{\la,i}\big)=0\qquad\text{and}\qquad\nabla_m\ti{\ml{F}}_\la\big(z_\la(m_{\la,i}),m_{\la,i}\big)=\nabla_m \ml{E}_\la\big(m_{\la,i}\big)=0.$$
The proof of the Claim is finished.
\end{proof}

\medskip
In order to complete the study of existence of two bubbling solutions to \eqref{mt} in the flat two-torus in rectangular form $T$, we shall show that the multiplicity depends on the values $G(z)$ with $z\in\{p_1,p_2,p_3\}$, precisely, depends on the form of $T$. 
Recall, $G$ has three nondegenerate critical points: $p_1=\dfrac{a}{2}$, $p_2=\dfrac{\text{i}b}{2}$ (saddle points) and $p_3=\dfrac{a+\text{i}b}{2}$ (minimum point). Notice that since $G$ has zero average and $p_3$ is a minimum point we have that $G(p_3)<0$ for any $a,b>0$. In other words, $G\Big(\dfrac{a+\text{i} b}{2}\Big)\le G\Big(\dfrac{a}{2}\Big)$ and
$G\Big(\dfrac{a+\text{i} b}{2}\Big)\le G\Big(\dfrac{\text{i}b}{2}\Big)$. From an explicit formula for $G$ shown in \cite{ChO}, direct computations lead us to get that
$$G\lf(p_i\rg)= f_i\Big(\frac{b}{a}\Big),\quad i=1,2,3
$$
where $f_i$, $i=1,2,3$ are given by
\begin{equation*}
f_1(\tau)= \frac{\tau}{12 }-{1\over 2\pi}\log 2-\frac{1}{\pi}\sum_{n=1}^{+\infty}\log \lf(1+e^{-2\pi  n\tau} \rg),
\end{equation*}
\begin{equation*}
f_2(\tau)= -\frac{\tau}{24 }- \frac{1}{\pi}\sum_{n=0}^{+\infty}\log \lf(1-e^{-\pi (2n+1) \tau} \rg)
\end{equation*}
and
\begin{equation*}
f_3(\tau)= -\frac{\tau}{24 }-\frac{1}{\pi}\sum_{n=0}^{+\infty}\log \lf(1+e^{-\pi  (2n+1)\tau} \rg),
\end{equation*}
so that we can study them in terms of $\tau=\dfrac{b}{a}$. 
By symmetry arguments it follows that in case $a=b$, namely, $\tau=1$, it holds $G\Big(\ds{a\over 2}\Big)=G\Big({\text{i}a\over2}\Big)$, so, equivalently $f_1(1)=f_2(1)\approx -0.03$. By studying $f_i$, $i=1,2,3$ we obtain the following fact.

\begin{claim}
There exist $\tau_0<1<\tau_1$ such that $f_1(\tau_1)=0$, $f_2(\tau_0)=0$. If $\tau\in(0,\tau_0]\cup [\tau_1,+\infty)$ then there exist seven different critical points $(\xi_{\la,i},m_{\la,i})$ $i=1,\dots,7$ of $\ml{F}_\la$. If $\tau\in(\tau_0,\tau_1)$ then there exist nine different critical points $(\xi_{\la,i},m_{\la,i})$ $i=1,\dots,9$ of $\ml{F}_\la$.
\end{claim}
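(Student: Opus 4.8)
The plan is to reduce the count to the elementary sign analysis of $f_1,f_2,f_3$, once we record how many critical points of $\ml{F}_\la$ sit over each half period $p_i$.

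\emph{Step 1 (counting over a fixed $p_i$).} As established above, every critical point of $\psi_2$ has its $z$-component equal to some half period $p_i$, $i=1,2,3$; and by Claim \ref{essmk2}, for $z=p_i$ fixed the reduced function $m\mapsto\psi_2(p_i,m)$ has exactly one critical point $(m_0,m_0)$ when $G(p_i)\ge0$, and exactly three, namely $(m_1,m_2)$, $(m_0,m_0)$, $(m_2,m_1)$ with $m_1<m_0<m_2$, when $G(p_i)<0$; moreover all of them are nondegenerate provided $G(p_i)\ne-\dfrac{1}{4\pi}$. Since $p_i$ is a nondegenerate critical point of $z\mapsto G(z)$ and $\fr_{z}\fr_{m}\psi_2(z,m)=-8\pi m_j\,\fr_{z}G(z)$ vanishes at $z=p_i$, the Hessian of the full functional $\psi_2$ at each such $(p_i,m)$ is block diagonal and therefore nondegenerate; this is a stable critical point situation, so it yields a critical point of $\ti{\ml{F}}_\la(z,m)$ converging to $(p_i,m)$ as $\la\to0$. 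In the exceptional case $G(p_i)=-\dfrac{1}{4\pi}$, Claim \ref{claim3.8} produces the same three critical points of $\ml{F}_\la$. Hence, putting $n(g)=1$ for $g\ge0$ and $n(g)=3$ for $g<0$, and recalling $G(p_i)=f_i(b/a)=f_i(\tau)$, for $\la$ small the functional $\ml{F}_\la(z,m)$ has
$$ N(\tau)\ =\ n\big(f_1(\tau)\big)+n\big(f_2(\tau)\big)+n\big(f_3(\tau)\big) $$
critical points which converge to pairwise distinct limits $(p_i,m)$, hence are pairwise distinct, each giving rise (via translation) to a family of bubbling solutions of \eqref{mt} as in Theorem \ref{teo2}.

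\emph{Step 2 (the three signs).} Directly from its series $f_3(\tau)=-\dfrac{\tau}{24}-\dfrac1\pi\sum_{n=0}^{\infty}\log\big(1+e^{-\pi(2n+1)\tau}\big)<0$ for every $\tau>0$ (equivalently, $p_3$ minimizes the zero-average function $G$), so $n\big(f_3(\tau)\big)=3$ always. For $f_1$ and $f_2$, differentiating term by term, which is legitimate because the derived series converge uniformly on $[\de,\infty)$ for every $\de>0$, gives
$$ f_1'(\tau)=\frac1{12}+2\sum_{n=1}^{\infty}\frac{n\,e^{-2\pi n\tau}}{1+e^{-2\pi n\tau}}>0,\qquad f_2'(\tau)=-\frac1{24}-\sum_{n=0}^{\infty}\frac{(2n+1)\,e^{-\pi(2n+1)\tau}}{1-e^{-\pi(2n+1)\tau}}<0, $$
so $f_1$ is strictly increasing and $f_2$ strictly decreasing on $(0,\infty)$. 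As $\tau\to+\infty$ the series vanish, whence $f_1(\tau)\to+\infty$ and $f_2(\tau)\to-\infty$; as $\tau\to0^+$, the series $\sum_n\log(1+e^{-2\pi n\tau})$ diverges so $f_1(\tau)\to-\infty$, while already the $n=0$ term $\log(1-e^{-\pi\tau})\to-\infty$ gives $f_2(\tau)\to+\infty$. Thus $f_1$ has a unique zero $\tau_1$ and $f_2$ a unique zero $\tau_0$, with $f_1<0$ on $(0,\tau_1)$, $f_1\ge0$ on $[\tau_1,\infty)$, $f_2\ge0$ on $(0,\tau_0]$, and $f_2<0$ on $(\tau_0,\infty)$. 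Finally, $f_1(1)<\dfrac1{12}-\dfrac{\log 2}{2\pi}<0$ because $\dfrac{\pi}{6}<\log 2$, so $\tau_1>1$; and the symmetry $G\big(\tfrac a2\big)=G\big(\tfrac{\text{i}b}2\big)$ at $\tau=1$ forces $f_2(1)=f_1(1)<0$, so $\tau_0<1$. In particular $\tau_0<1<\tau_1$.

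\emph{Step 3 (conclusion).} Combining the two steps: for $\tau\in(0,\tau_0]$ one has $f_1(\tau)<0$, $f_2(\tau)\ge0$, $f_3(\tau)<0$, so $N(\tau)=3+1+3=7$; for $\tau\in[\tau_1,+\infty)$ one has $f_1(\tau)\ge0$, $f_2(\tau)<0$, $f_3(\tau)<0$, so $N(\tau)=1+3+3=7$; and for $\tau\in(\tau_0,\tau_1)$ all three values are negative, so $N(\tau)=3+3+3=9$, which is the assertion. The delicate point is Step 2: justifying the termwise differentiation and pinning down the $\tau\to0^+$ behaviour of these theta-type series, together with the inequality placing both zeros $\tau_0$ and $\tau_1$ on opposite sides of $\tau=1$ — it is exactly this last fact that rules out the otherwise a priori possible value $N(\tau)=5$.
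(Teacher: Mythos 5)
Your proposal is correct and follows essentially the same route as the paper: reduce to the sign pattern of $G$ at the three half periods via the count (one critical pair when $G(p_i)\ge 0$, three when $G(p_i)<0$), then analyze $f_1,f_2,f_3$ as functions of $\tau$ and count $N(\tau)=n(f_1)+n(f_2)+n(f_3)$ over the three intervals $(0,\tau_0]$, $(\tau_0,\tau_1)$, $[\tau_1,\infty)$.

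Where you improve on the paper is in Step 2. The paper supports the location of the zeros of $f_1$ and $f_2$ with numerical approximations ($f_1(0)\approx -1.43$, $f_1(1)=f_2(1)\approx -0.03$, $f_3(0)\approx -1.32$), whereas you give fully analytic justifications: explicit termwise derivatives yielding strict monotonicity, the exact limit behaviour at $0^+$ and $+\infty$, the clean inequality $\pi/6<\log 2$ to conclude $f_1(1)<0$, and the $\tau\mapsto 1/\tau$ symmetry of the torus to get $f_2(1)=f_1(1)<0$ and hence $\tau_0<1<\tau_1$. Incidentally, your limit $f_1(\tau)\to-\infty$ as $\tau\to0^+$ (from divergence of $\sum_n\log(1+e^{-2\pi n\tau})$) is correct, and the paper's stated finite value $f_1(0)\approx-1.43$ (and similarly $f_3(0)\approx-1.32$) is actually in error; fortunately only the sign of $f_1$ near $0^+$ is used, so the conclusion is unaffected. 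One small remark on Step 1: your block-diagonality observation for the Hessian of $\psi_2$ at $(p_i,m)$ is a nice explicit justification of nondegeneracy, but it should be paired with the reminder (which you do implicitly cite) that $p_i$ is a nondegenerate critical point of $G$ so the $z$-block is invertible, and that when $G(p_i)=-\tfrac{1}{4\pi}$ the $(m_0,m_0)$ block degenerates and the separate argument of the preceding claim is required, which you do invoke.
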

\begin{proof}[\dem]
From the definition of $f_1$ it follows that $f_1$ is a continuous function, strictly increasing and strictly concave, $f_1(0)\approx -1.43$ and $f_1(\tau)\to+\infty$ as $\tau\to+\infty$, so that, there is $\tau_1>1$ such that $f_1(\tau_1)=0$. Also, $f_2$ is a continuous function, strictly decreasing and strictly convex, with $f_2(\tau)\to+\infty$ as $\tau\to 0^+$ and $f_2(\tau)\to -\infty$ as $\tau\to+\infty$ so that there is $\tau_0<1$ such that $f_2(\tau_0)=0$. For $f_3$ we obtain that it is a continuous function, strictly concave, $\tau=1$ is a maximum with $f_3(1)\approx -0.06$, $f_3(0)\approx -1.32$ and $f_3(\tau)\to-\infty$ as $\tau\to+\infty$.

Hence, depending on the value of $\tau=\dfrac{b}{a}$ we have three cases.
\begin{enumerate}
\item If $\tau\in(0,\tau_0]$ then $f_1(\tau)=G\Big(\dfrac{a}{2}\Big)<0$, $f_2(\tau)=G\Big(\dfrac{\text{i} b}{2}\Big)\ge 0$ and $f_3(\tau)=G\Big(\dfrac{a+\text{i} b}{2}\Big)<0$, so that we have seven critical points of $\ti{\ml{F}}_\la$. Precisely, $G\Big(\dfrac{a}{2}\Big)<0$, $G\Big(\dfrac{\text{i} b}{2}\Big)\ge 0$ and $G\Big(\dfrac{a+\text{i} b}{2}\Big)<0$ gives rise to three, one and three three critical points respectively.

\item If $\tau\in(\tau_0,\tau_1)$ then $G\Big(\dfrac{a}{2}\Big)<0$, $G\Big(\dfrac{\text{i} b}{2}\Big)<0$ and $G\Big(\dfrac{a+\text{i} b}{2}\Big)<0$, so that we have nine critical points of $\ti{\ml{F}}_\la$. Precisely, $G\Big(\dfrac{a}{2}\Big), G\Big(\dfrac{\text{i} b}{2}\Big), G\Big(\dfrac{a+\text{i} b}{2}\Big)<0$ gives rise to three critical points each one.

\item If $\tau\in(\tau_1,+\infty)$ then $G\Big(\dfrac{a}{2}\Big)\ge 0$, $G\Big(\dfrac{\text{i} b}{2}\Big)<0$ and $G\Big(\dfrac{a+\text{i} b}{2}\Big)<0$, so that we have seven critical points of $\ti{\ml{F}}_\la$. Precisely, $G\Big(\dfrac{a}{2}\Big)\ge 0$, $G\Big(\dfrac{\text{i} b}{2}\Big), G\Big(\dfrac{a+\text{i} b}{2}\Big)<0$ gives rise to one, three and three three critical points respectively.
\end{enumerate}
The proof of the Claim is finished
\end{proof}
Therefore, given a rectangle we can obtain exactly either seven or nine different family of solutions. This completes the proof.
\end{proof}

\begin{proof}[{\bf Proof (of Theorem \ref{s2}):}] Assume that $S=\mathbb{S}^2$. By invariance under rotations it follows that $H(\xi,\xi)$ is constant. Furthermore, as it was said in the introduction problem \eqref{mt} is invariant under rotations, so we look for solutions with one bubbling point fixed. Thus, with a slightly abuse of notation, we are reduced to look for critical points of the functional
$$\ml{F}_\lambda (\xi_1 , m) =4\pi -{\la|T|\over 2}+8\pi\la\lf[A\sum_{j=1}^2 m_j^2+2\sum_{j=1}^2 m_j^2\log m_j-8\pi m_1 m_2G(\xi_1,\xi_2)\rg]+ \theta_\lambda(\xi_1, m ),$$
where $A=\log 16-2-8\pi H(\xi ,\xi)$. In other words, we fix $\xi_2\in\mathbb{S}^2$ and look for critical points on $\xi_1$ of $G(\cdot,\xi_2)$. Since $G(\cdot,\xi_2)$ has a global minimum, for any $m$ there exist $\xi_{1,\la}(m)\in\mathbb{S}^2$ such that $\ml{F}_{\la}$ attains its minimum at $\xi_1=\xi_{1,\la}(m)$. Hence, from the same procedure used in Claim \ref{claim3.8} it follows that for $\la$ small enough there exists $m_\la$ such that $\big(\xi_{1,\la}(m_\la),m_\la\big)$ is actually a critical point of $\ml{F}_\lambda (\xi_1 , m)$. This finishes the proof.
\end{proof}

\section{Proof of Proposition \ref{fullexpansionenergy}} \label{sec4}
\noindent The purpose of this section is to give a proof of the Proposition \ref{fullexpansionenergy}, namely, an asymptotic
expansion of the ``reduced energy" $\ml{F}_\la (\xi,m)=J_\la\big(U(\xi,m)+\ti\phi(\xi,m)\big)$, where $J_\la$ is
the energy functional given by \eqref{energy}, $U=\sqrt{\la}V$ with $V$ defined by \eqref{ansatz} and $\ti\phi=\sqrt{\la}\phi$ with $\phi$ the solution given by Proposition \ref{lpnlabis}. The proof will be divided into several steps. To this aim, we recall the following result. See \cite{EsFi} for a proof.
\begin{lem}\label{ieuf}
Letting $\bar f\in C^{2,\gamma}(S)$ (possibly depending in $\xi$),
$0<\gamma<1$. The following expansions do hold as $\delta \to 0$:
\begin{eqnarray*}
\int_S \chi_\xi e^{-\varphi_\xi} \bar f(x) e^{U_{\de,\xi}} dv_g= 8\pi \bar f(\xi)-4\pi \de^2\log\de \Delta_g \bar f (\xi) +O(\de^2),
\end{eqnarray*}
\begin{eqnarray*}
\int_S \chi_\xi e^{-\varphi_\xi} \bar f(x)
e^{U_{\de,\xi}}\frac{dv_g}{\delta^2+|y_\xi(x)|^2} =
\frac{4\pi}{\delta^2}\bar f(\xi)+\pi \Delta_g \bar f(\xi)+O(\delta^{\gamma})
\end{eqnarray*}
and
\begin{eqnarray*}
\int_S \chi_\xi e^{-\varphi_\xi} \bar f(x) e^{U_{\de,\xi}}\frac{a
\delta^2-|y_\xi(x)|^2}{(\delta^2+|y_\xi(x)|^2)^2}\, dv_g =\frac{4
\pi}{3 \de^2}(2a-1) \bar f(\xi)+(a-2)\frac{\pi}{3} \Delta_g \bar f(\xi)
+O(\delta^\gamma)
\end{eqnarray*}
for $a \in \mathbb{R}$.
\end{lem}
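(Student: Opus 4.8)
The three expansions all follow from the same scheme, so the plan is to treat the first integral carefully and then indicate the minor modifications needed for the other two. First I would pull everything back to the isothermal chart $y_\xi$, writing $y=y_\xi(x)$; the crucial simplification is that $dv_g=e^{\hat\varphi_\xi(y)}\,dy$ on $B_{2r_0}(0)$, so the factor $e^{-\varphi_\xi}$ cancels the area element exactly. Using $e^{U_{\de,\xi}(x)}=8\de^2/(\de^2+|y|^2)^2$, the first integral becomes
$$\int_{B_{2r_0}(0)} g(y)\,\frac{8\de^2}{(\de^2+|y|^2)^2}\,dy,\qquad g(y):=\chi(y)\,\bar f(y_\xi^{-1}(y)),$$
where $g\in C^{2,\gamma}$ has compact support; since $\chi\equiv1$ near $0$ it coincides with $\bar f\circ y_\xi^{-1}$ on a neighbourhood of the origin, so $g(0)=\bar f(\xi)$. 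Rescaling $y=\de z$ turns this into $\int_{B_{2r_0/\de}(0)} g(\de z)\,8(1+|z|^2)^{-2}\,dz$, and I would Taylor expand $g(\de z)=g(0)+\nabla g(0)\cdot(\de z)+\tfrac12(\de z)^{T}D^2g(0)(\de z)+O(|\de z|^{2+\gamma})$.

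In this expansion the linear term integrates to zero over the ball $\{|z|<2r_0/\de\}$ by oddness, and the constant term gives $8\pi\,g(0)$ once the tail $\int_{|z|>2r_0/\de}8(1+|z|^2)^{-2}\,dz=O(\de^2)$ is discarded. For the quadratic term one uses that in two dimensions $\int z_iz_j\,\omega(|z|)\,dz$ vanishes for $i\neq j$ and is independent of $i$, so only the trace of $D^2g(0)$, that is $\Delta g(0)$, survives; this reduces it to $\tfrac{\de^2}{4}\,\Delta g(0)\int_{|z|<2r_0/\de}|z|^2\,8(1+|z|^2)^{-2}\,dz$. This last radial integral is the only genuinely delicate point: it diverges logarithmically, and tracking the truncation radius $2r_0/\de$ one gets $\int_{|z|<2r_0/\de}|z|^2\,8(1+|z|^2)^{-2}\,dz=-16\pi\log\de+O(1)$, which yields exactly the term $-4\pi\de^2\log\de\,\Delta g(0)$. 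The $C^{2,\gamma}$ Hölder remainder contributes $O(\de^{2+\gamma}\int_{|z|<2r_0/\de}|z|^{2+\gamma}(1+|z|^2)^{-2}\,dz)=O(\de^2)$, which is subleading. Finally, since $\hat\varphi_\xi(0)=0$ and $\nabla\hat\varphi_\xi(0)=0$, the conformal law $\Delta_g=e^{-\hat\varphi_\xi}\Delta$ gives $\Delta g(0)=\Delta_g\bar f(\xi)$, and the first expansion follows.

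For the second and third integrals the same steps apply: after pulling back and rescaling $y=\de z$ one is left, with an overall prefactor $\de^{-2}$, with the weights $8(1+|z|^2)^{-3}$ and $8(a-|z|^2)(1+|z|^2)^{-4}$ respectively. Now every relevant radial integral $\int_{\R^2}|z|^{2\ell}(1+|z|^2)^{-m}\,dz$ with $m\in\{3,4\}$ and $\ell\in\{0,1,2\}$ converges, so no logarithm appears: the constant Taylor term gives the $\de^{-2}$ contribution, the linear term vanishes by oddness, the quadratic term gives the $\Delta_g\bar f(\xi)$ contribution, the tails beyond $|z|=2r_0/\de$ are $O(\de^2)$, and the Hölder remainder now contributes $O(\de^{-2}\cdot\de^{2+\gamma})=O(\de^\gamma)$, which is exactly the stated error. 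The explicit constants $4\pi$, $\pi$, $\tfrac{4\pi}{3}(2a-1)$ and $\tfrac{\pi}{3}(a-2)$ are then obtained by evaluating $\int_0^\infty r^{2\ell+1}(1+r^2)^{-m}\,dr$ via the substitution $u=r^2$. I expect the only real obstacle in the whole argument to be the careful bookkeeping of the logarithmically divergent integral in the first expansion — in particular keeping track of the cut-off at $|y|=2r_0$ (equivalently $|z|=2r_0/\de$) in order to isolate the coefficient of $\de^2\log\de$ and to verify that the remainder is genuinely $O(\de^2)$ rather than $O(\de^2|\log\de|)$.
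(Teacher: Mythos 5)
Your proof is correct and is the standard scheme one would use: pull back to isothermal coordinates so that $e^{-\varphi_\xi}\,dv_g$ becomes $dy$, rescale by $\delta$, Taylor-expand $g=\chi\cdot(\bar f\circ y_\xi^{-1})$ to second order with a $C^{2,\gamma}$ remainder, use radial symmetry of the ball to kill the odd and off-diagonal quadratic moments, compute the remaining radial moments by $u=r^2$, and use $\hat\varphi_\xi(0)=0$ to identify $\Delta g(0)$ with $\Delta_g\bar f(\xi)$. I verified the moment integrals: for the first expansion the only divergent moment is $\int_{|z|<2r_0/\delta}|z|^2\frac{8\,dz}{(1+|z|^2)^2}=-16\pi\log\delta+O(1)$, which with the prefactor $\tfrac{\delta^2}{4}\Delta g(0)$ gives exactly $-4\pi\delta^2\log\delta\,\Delta_g\bar f(\xi)$ plus an $O(\delta^2)$; for the second and third expansions the kernels carry an extra $\delta^{-2}$ and all radial moments converge, giving the stated constants ($4\pi$, $\pi$, $\tfrac{4\pi}{3}(2a-1)$, $\tfrac{\pi}{3}(a-2)$) and the Hölder remainder becomes $O(\delta^{-2}\cdot\delta^{2+\gamma})=O(\delta^\gamma)$. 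The paper does not reproduce this argument but simply cites \cite{EsFi}; your blind reconstruction matches what that reference does and the bookkeeping you flag (the cutoff radius in the log-divergent moment) is indeed the only delicate step.
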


\medskip \noindent We are now ready to establish the expansion of $J_\la(U)$:
\begin{claim} \label{expansionenergy}
 The following expansion does
hold
\begin{equation} \label{JUt}
J_\lambda (U) =2\pi k -{\lambda |S| \over 2}+ 8\pi \la\psi_k(\xi,m)+O(\la^2|\log\la|^2)
\end{equation}
in $C(\Xi\times\ml{M})$ as $\la\to 0^+$.
\end{claim}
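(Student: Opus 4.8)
The plan is to compute $J_\lambda(U)=\frac12\int_S|\nabla U|_g^2\,dv_g-\frac\lambda2\int_Se^{U^2}\,dv_g$ directly, exploiting that $U=\sqrt\lambda V$ with $V=\sum_j m_j PU_j$, so that $J_\lambda(U)=\frac\lambda2\int_S|\nabla V|_g^2\,dv_g-\frac\lambda2\int_Se^{\lambda V^2}\,dv_g$. I would split the argument into two estimates: the Dirichlet term and the exponential term.

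For the Dirichlet term, integrate by parts to get $\int_S|\nabla V|_g^2\,dv_g=-\int_S V\,\Delta_g V\,dv_g$, and then use \eqref{lav} together with the definition \eqref{ePu} of the projections, which says $-\Delta_g PU_j=\chi_j e^{-\varphi_j}\varepsilon_j^{-2}e^{w_j}-\frac1{|S|}\int_S\chi_je^{-\varphi_j}\varepsilon_j^{-2}e^{w_j}\,dv_g$. Pairing $V=\sum_i m_i PU_i$ against this, the average terms drop because $\int_S PU_i\,dv_g=0$, and one is left with $\sum_{i,j}m_im_j\int_S \chi_j e^{-\varphi_j}\varepsilon_j^{-2}e^{w_j}PU_i\,dv_g$. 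On the ``diagonal'' ($i=j$) one uses the expansion of $PU_j$ near $\xi_j$ (namely $PU_j=\chi_j[U_j-\log(8\delta_j^2)]+8\pi H(\cdot,\xi_j)+O(\delta_j^2|\log\delta_j|)$ from Lemma \ref{ewfxi}, with $U_j-\log(8\delta_j^2)=w_j-\log(8\mu_j^2)$) and applies the first two integral asymptotics of Lemma \ref{ieuf} with $\bar f$ built from $e^{-\varphi_j}$ and the smooth remainders; on the off-diagonal ($i\ne j$) one uses $PU_i=8\pi G(\cdot,\xi_i)+O(\delta_i^2|\log\delta_i|)$ away from $\xi_i$ and again Lemma \ref{ieuf} to localize the mass $8\pi$ at $\xi_j$. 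Collecting terms and inserting the definitions \eqref{muj} of $\mu_j$ and \eqref{e} of $\varepsilon_j$ (which convert $\log(8\mu_j^2)$ into $H$ and $G$ contributions and $\log\varepsilon_j^{-4}$ into $\frac1{2\lambda m_j^2}-2\log(2m_j^2)$) produces the leading $\sum_j\frac1{\lambda m_j^2}\cdot m_j^2$-type terms plus the $\psi_k$-type terms plus errors of order $\varepsilon_j^2|\log\varepsilon_j|^2=O(\lambda|\log\lambda|^2)$.

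For the exponential term, I would reuse the local analysis already carried out in the proof of Lemma \ref{estrr0}: on $S\setminus\cup_j B_{r_0}(\xi_j)$ one has $V=O(1)$ so $\int e^{\lambda V^2}=|S|-\sum_j|B_{r_0}(\xi_j)|+O(\lambda)$; and on each $B_{r_0}(\xi_j)$, using \eqref{ttu}--\eqref{lvlv2}, $e^{\lambda V^2}=2m_j^2\varepsilon_j^{-2}e^{w_j+\theta_j+\lambda m_j^2(w_j+\theta_j)^2}e^{1/(4\lambda m_j^2)}$ wait—more precisely $\lambda e^{\lambda V^2}$ is controlled exactly as $\lambda Ve^{\lambda V^2}$ was, via the annular decomposition $A_{r_0,\delta\sqrt{\varepsilon_j}}\cup A_{\delta\sqrt{\varepsilon_j},\delta\varepsilon_j|\log\varepsilon_j|}\cup B_{\delta\varepsilon_j|\log\varepsilon_j|}$, where the inner ball carries the main term $\int\varepsilon_j^{-2}e^{w_j+\theta_j}=8\pi+O(\lambda+\sum\varepsilon_i^2|\log\varepsilon_i|)$ computed by scaling $\varepsilon_j z=y_{\xi_j}(x)$ and Lemma \ref{ieuf}, and the two annuli contribute $O(\lambda)$. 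Multiplying by $\frac\lambda2$ and using $e^{1/(4\lambda m_j^2)}=2m_j^2/\varepsilon_j^2$ from \eqref{e}, the $-\frac\lambda2\int_Se^{\lambda V^2}$ term yields $-\frac{\lambda|S|}2$ plus a $\sum_j$ contribution that combines with the Dirichlet part.

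The main obstacle, and the step requiring the most care, is the bookkeeping of the leading-order cancellations: both $\frac\lambda2\int|\nabla V|^2$ and $\frac\lambda2\int e^{\lambda V^2}$ individually blow up like $\sum_j\frac1{m_j^2}$ (coming from $\frac1{2\lambda m_j^2}$ and $\log\varepsilon_j^{-4}$), and it is only after substituting \eqref{muj}--\eqref{e} and adding the two pieces that these divergences cancel against the $2\pi k$ and against each other, leaving the finite quantity $2\pi k-\frac{\lambda|S|}2+8\pi\lambda\psi_k(\xi,m)$. One must track constants like $\log 16-2$ (which arise from $\log(8\mu_j^2)$, $\log(2m_j^2)$ and the constant $\pi/3\cdot(a-2)$-type terms in Lemma \ref{ieuf} with the relevant value of $a$), verify the coefficient $8\pi\lambda$ is correct, and confirm every error is $O(\lambda^2|\log\lambda|^2)$; this last point is why the weighted expansions in Lemma \ref{ieuf} are stated to order $\delta^2$ (equivalently $\delta_j^2|\log\delta_j|$, and $\delta_j^2=\mu_j^2\varepsilon_j^2$ with $\varepsilon_j^2|\log\varepsilon_j|=O(\lambda)$ from \eqref{e}), giving the claimed $O(\lambda^2|\log\lambda|^2)$ after multiplication by the outer factor $\lambda$. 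Once $J_\lambda(U)$ is established, the full statement $\mathcal F_\lambda=J_\lambda(U+\tilde\phi)$ of Proposition \ref{fullexpansionenergy} follows by a separate perturbation argument using $\|\phi\|_\infty\le C\lambda$ from Proposition \ref{lpnlabis}, but that is outside the scope of this claim.
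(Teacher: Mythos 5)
Your outline follows the paper's route in broad strokes: integrate by parts for the Dirichlet term and pair $V$ against $-\Delta_g V$ via the projection identity, then estimate the potential integral by an annular decomposition around each $\xi_j$, and finally combine using \eqref{muj}--\eqref{e}. The Dirichlet-term bookkeeping you describe is essentially what the paper does, and your remark that the individually divergent pieces (of size $\sim(\lambda m_j^2)^{-1}$) must cancel upon substituting \eqref{muj}--\eqref{e} is exactly the delicate point.

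The step that would fail as written is the potential term. You split $S$ at radius $r_0$ about each $\xi_j$, record $\int_{S\setminus\cup_j B_{r_0}(\xi_j)}e^{\lambda V^2}\,dv_g=|S|-\sum_j|B_{r_0}(\xi_j)|+O(\lambda)$, and then say the inner ball $B_{\delta\varepsilon_j|\log\varepsilon_j|}(\xi_j)$ gives $16\pi m_j^2$ while ``the two annuli contribute $O(\lambda)$.'' But the outer annulus $A_{r_0,\delta\sqrt{\varepsilon_j}}(\xi_j)$ has area $|B_{r_0}(\xi_j)|+o(1)=O(1)$ and on it $e^{\lambda V^2}=1+O(\lambda|\log\lambda|^2)$, so it contributes $|B_{r_0}(\xi_j)|+O(\lambda|\log\lambda|^2)$ --- an $O(1)$ quantity, not $O(\lambda)$ --- and this is precisely what must cancel the $-\sum_j|B_{r_0}(\xi_j)|$ you produced in the outer region. (You appear to be re-using the $O(1)$ bound for $\int_{A_{r_0,\delta\sqrt{\varepsilon_j}}}Ve^{\lambda V^2}$ from the proof of Lemma \ref{estrr0}, which after multiplying by $\lambda$ gives $O(\lambda)$; but here there is no extra $V$ or $\lambda$ factor.) Dropping it as $O(\lambda)$ would shift the coefficient of $\lambda$ in $J_\lambda(U)$ by $\tfrac12\sum_j|B_{r_0}(\xi_j)|$. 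The paper sidesteps this by splitting the outer region at the larger radius $\delta|\log\varepsilon_j|^{-1}$: there $V=O(|\log\lambda|)$ still holds, so $e^{\lambda V^2}=1+O(\lambda|\log\lambda|^2)$, while the removed discs now have area only $O(\lambda^2)$, and each remaining annulus then genuinely contributes $O(\lambda)$. A secondary inaccuracy: the constant $\log 16-2$ in $\psi_k$ does not arise from the $\tfrac{\pi}{3}(a-2)\Delta_g\bar f$ term of Lemma \ref{ieuf}. It comes from $-1+\log 8$ in the diagonal Dirichlet computation (via $\int_{\mathbb{R}^2}(1+|y|^2)^{-2}\log\tfrac{|y|^4}{(1+|y|^2)^2}\,dy=-2\pi$ together with $\log(8\mu_j^2\varepsilon_j^2)$), plus $\log 2$ from $\log(2m_j^2)$, plus a further $-1$ from $-\tfrac\lambda2\cdot 16\pi\sum_j m_j^2$ in the potential term.
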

\begin{proof}[\dem]
First, let us consider the term
$$\int_S |\grad V|_g^2 dv_g= \int_S V (-\Delta_g V)dv_g= \sum_{j,l=1}^km_jm_l \int_S \chi_j e^{-\varphi_j} e^{U_j}PU_l dv_g$$
in view of $\ds \int_S V dv_g=0$. Since by (\ref{green}) and
(\ref{ePu})
\begin{eqnarray} \label{tricky}
\int_S \chi_j e^{-\varphi_j} e^{U_j} G(x,\xi_l) dv_g= \int_S
(-\Delta_g PU_j) G(x,\xi_l) dv_g= PU_j(\xi_l)
\end{eqnarray}
for all $j,l=1,\dots,m$, by Lemmata \ref{ewfxi}, \ref{ieuf} and
(\ref{tricky}) we have that for $l=j$
\begin{eqnarray*}
&& \int_S \chi_j e^{-\varphi_j}e^{U_j}PU_j dv_g\\
&&= \int_S \chi_j e^{-\varphi_j} e^{U_j}\lf[\chi_j(U_j-\log(8\mu_j^2\e_j^2))+8\pi H(x,\xi_j)+O(\e_j^2|\log\e_j|)\rg]dv_g\\
&&= \int_S \chi_j e^{-\varphi_j} e^{U_j}\lf[\chi_j \log \frac{|y_{\xi_j}(x)|^4}{(\mu_j^2\e_j^2+|y_{\xi_j}(x)|^2)^2}+8\pi G(x,\xi_j)\rg]dv_g +O(\e_j^2 |\log \e_j|)\\
&&=8 \int_{B_{2r_0/\mu_j\e_j}(0)} \frac{\chi^2(\mu_j\e_j |y|)
}{(1+|y|^2)^2} \log \frac{|y|^4}{(1+|y|^2)^2} dy
+8\pi PU_j(\xi_j) +O(\e_j^2|\log\e_j|)\\
&&=-16\pi -32 \pi \log \mu_j\e_j +64\pi^2 H(\xi_j,\xi_j)+O(\e_j^2
|\log \e_j|)
\end{eqnarray*}
in view of
\begin{eqnarray*}
\int_{\mathbb{R}^2}{dy \over(1+|y|^2)^2}\log{|y|^4\over
(1+|y|^2)^2} =  2 \pi\int_0^\infty{ds \over(1+s)^2}\log {s \over
1+s} =- 2 \pi  \int_0^\infty {ds \over (1+s)^2}=- 2\pi
\end{eqnarray*}
by means of an integration by parts. Similarly, by Lemmata
\ref{ewfxi}, \ref{ieuf}  and (\ref{tricky}) we have that for $l
\not= j$
\begin{eqnarray*}
\int_S \chi_j e^{-\varphi_j}e^{U_j}P U_l dv_g&=& \int_S \chi_j e^{-\varphi_j} e^{U_j}\lf[8\pi G(x,\xi_l)+O(\e_l^2|\log\e_l|)\rg]dv_g\\
&=& 64\pi^2 G(\xi_l,\xi_j) +O(\e_j^2|\log\e_j|+\e^2_l|\log\e_l|).
\end{eqnarray*}
By using the definition of $\mu_j$ and $\e_j$ and summing up the
two previous expansions, for the gradient term we get that
\begin{eqnarray*}
&&{1\over 2}\int_S  |\grad U|_g^2 dv_g= {\la\over 2}\int_S|\grad
V|_g^2\,dv_g\\
&&=2\pi k-8\pi \la(1-\log
8)\sum_{j=1}^km_j^2+8\pi\la\sum_{j=1}^km_j^2\log(2m_j^2)-32\pi^2\la\sum_{j=1}^km_j^2H(\xi_j,\xi_j)\\
&&\quad-32\pi^2\la\sum_{i\ne j}m_im_j
G(\xi_i,\xi_j)+\la\sum_{j=1}^k
O(\e_j^2|\log\e_j|).
\end{eqnarray*}

\medskip \noindent Let us now expand the potential term in $J_\lambda(U)$. By Lemma \ref{ewfxi} for any $j=1,\dots,k$ we find that $PU_j=O(\big|\log|\log\e_j|\big|)=O(|\log\la|)$, in $B_{r_0}(\xi)\sm B_{\de \over|\log\e_j|}(\xi_j).$ Recall that $PU_j=O(1)$ in $S\sm\cup_{j=1}^kB_{r_0}(\xi_j)$ for each $j=1,\dots,k$. Hence, we have that $V=O(|\log\la|)$, in $S\sm \cup_{j=1}^kB_{\de\over|\log\e_j|}(\xi_j)$
and also,
$$\int_Se^{\la V^2}\,dv_g=\sum_{j=1}^k\int_{B_{\de|\log\e_j|^{-1}}(\xi_j)}e^{\la V^2}dv_g+|S|+O(\la|\log\la|^2).$$
Now, we write
$$\int_{B_{\de|\log\e_j|^{-1}}(\xi_j)}e^{\la V^2}dv_g=\bigg[\int_{A_{\de|\log\e_j|^{-1},\de\sqrt{\e_j}}(\xi_j)}+\int_{A_{\de\sqrt{\e_j},\de\e_j|\log\e_j|}(\xi_j)}+\int_{B_{\de\e_j|\log\e_j|}(\xi_j)}\bigg]e^{\la V^2}dv_g$$
In $A_{\de|\log\e_j|^{-1},\de\sqrt{\e_j}}(\xi_j)$, we know that uniformly $V(x)=-4m_j\log|y_{\xi_j}(x)|+O(1)$. Thus, we find that
\begin{equation*}
\begin{split}
\int_{A_{\de|\log\e_j|^{-1},\de\sqrt{\e_j}}(\xi_j)} e^{\la V^2}\,dv_g&=\int_{A_{\de|\log\e_j|^{-1},\de\sqrt{\e_j}}(\xi_j)}e^{\la m_j^2[16\log^2|y_{\xi_j}(x)|+O(|\log|y_{\xi_j}(x)||)]}dv_g\\
&=O\lf(\int_{B_{\de|\log\e_j|^{-1}}(0)\sm B_{\de\sqrt{\e_j}}(0)}e^{16\la m_j^2\log|y|}dy\rg)=O(\la),
\end{split}
\end{equation*}
in view of $e^{\hat\varphi_j}=O(1)$, $\la m_j^2|\log|y||=O(1)$ in the considered region and
\begin{equation*}
\begin{split}
\int_{B_{\de|\log\e_j|^{-1}}(0)\sm B_{\de\sqrt{\e_j}}(0)}e^{16\la m_j^2\log|y|}dy&=2\pi \int_{\de\sqrt{\e_j}}^{\de|\log\e_j|^{-1}} e^{16\la m_j^2\log^2 s}s\,ds\\
&=2\pi\int_{\log(\de\sqrt{\e_j})}^{\log(\de|\log\e_j|^{-1})}e^{2t+16\la m_j^2t^2}\,dt\quad(t=\log s)\\
&=O\lf(\int_{\log(\de\sqrt{\e_j})}^{\log(\de|\log\e_j|^{-1})}e^{t}\,dt\rg)=O(\la).
\end{split}
\end{equation*}
Now, we shall use that in $B_{r_0}(\xi_j)$ it holds that $e^{\la V^2}=2m_j^2\e_j^{-2}e^{w_j+\theta_j+\la m_j^2(w_j+\theta_j)^2}$ and $\la w_j=O(1)$ uniformly. Furthermore, we know that $\ds w_j(1+\la m_j^2w_j)\le {3\over4}w_j+\beta$, where $\beta$ is a constant in $A_{\de\sqrt{\e_j},\de\e_j|\log\e_j|}$. Hence, we obtain that
$$\int_{A_{\de\sqrt{\e_j},\de\e_j|\log\e_j|}}e^{\la V^2}=O\bigg(\int_{A_{\de\sqrt{\e_j},\de\e_j|\log\e_j|}}\e_j^{-2}e^{w_j+\la m_j^2w_j^2}\bigg)=O(\la).$$
Also, it follows that
\begin{equation*}
\begin{split}
\int_{B_{\de\e_j|\log\e_j|}(\xi_j)} e^{\la V^2}\,dv_g&=2m_j^2\int_{B_{\de\e_j|\log\e_j|}(\xi_j)}\e_j^{-2}e^{w_j+\theta_j+O(\la |\log\la|^2)}dv_g\\
&=2m_j^2[8\pi +O(\la|\log\la|^2),
\end{split}
\end{equation*}
in view of $w_j=O(|\log\la|)$ in $B_{\de\e_j|\log\e_j|}(\xi_j)$ and
$$\int_{B_{\de\e_j|\log\e_j|}(\xi_j)}\e_j^{-2}e^{w_j+\theta_j}=8\pi +O\bigg(\la+\sum_{j=1}^k\e_j^2|\log\e_j|\bigg).$$
Therefore, we conclude that
\begin{equation}\label{iev2}
\int_Se^{\la V^2}\,dv_g=16\pi \sum_{j=1}^km_j^2+|S|+O(\la|\log\la|^2)
\end{equation}
and consequently,
\begin{equation*}
\begin{split}
J_\la(U)&=2\pi k+8\pi \la(\log
16- 1)\sum_{j=1}^km_j^2+8\pi\la\sum_{j=1}^km_j^2\log(m_j^2)-32\pi^2\la\sum_{j=1}^km_j^2H(\xi_j,\xi_j)\\
&\quad-32\pi^2\la\sum_{i\ne j}m_im_j
G(\xi_i,\xi_j)+\la\sum_{j=1}^k
O(\e_j^2|\log\e_j|)-{\la\over2}\lf[16\pi \sum_{j=1}^km_j^2+|S|+O(\la|\log\la|^2)\rg].
\end{split}
\end{equation*}
This completes the proof.
\end{proof}

In order to expand the derivatives $\fr_{(\xi_l)_q} \ml{F}_\la$ and $\fr_{m_l} \ml{F}_\la$, and get some estimates for $\fr_{(\xi_l)_q} \phi$ and $\fr_{m_l} \phi$, we have to expand $\fr_{(\xi_l)_q} V$ and $\fr_{m_l} V$ for $q=1,2$  and $l=1,\dots,k$. Let us notice that from the definition of $PU_j$ and similar ideas to obtain the expansion \eqref{eaxi}, we have that the derivatives $\fr_{(\xi_l)_q} PU_j$, for $q=1,2$ and $\fr_{m_l} PU_j$ for $ j,\, l=1,\dots, k$ expand as follows
\begin{equation}
\begin{split}
\fr_{(\xi_l)_q}PU_j(x)=&\,\de_{jl} \fr_{(\xi_l)_q}\chi_j[U_j-\log(8\mu_j^2\e_j^2)] + \chi_j \fr_{(\xi_l)_q}[U_j-\log(8\mu_j^2\e_j^2)] \\
&\,+ \de_{ jl} 8\pi \fr_{(\xi_l)_q}H(x, \xi_j) + O\lf( \e_j^2|\log\e_j|\rg)
\end{split}
\end{equation}
and
\begin{equation}
\fr_{m_l}PU_j(x)= \chi_j \fr_{m_l}[U_j-\log(8\mu_j^2\e_j^2)] + O\lf( \e_j^2|\log\e_j|^2\rg),
\end{equation}
uniformly in $S$. Let us stress that $\mu_j=\mu_j(\xi,m)$ and $\e_j=\e_j(m_j)$. Furthermore, from the definition of $\e_l$ we get
\begin{equation}\label{del}
\fr_{m_l} \e_l={\e_l\over 4}\lf({1\over \la m_l^3 } + {4\over m_l} \rg)={\e_l\over m_l}\lf[ -2\log\e_l+\log(2m_l^2)+1\rg].
\end{equation}
Hence, we have that uniformly in $S$
\begin{equation}\label{dxiv}
\begin{split}
\fr_{(\xi_l)_q}V(x)
=&\,-2m_l\fr_{(\xi_l)_q}\chi_l\log(\mu_l^2\e_l^2+|y_{\xi_l}(x)|^2) - m_l \chi_l {4\mu_l\fr_{(\xi_l)_q}\mu_l\e_l^2+2\fr_{(\xi_l)_q}(|y_{\xi_l}(x)|^2)\over \mu_l^2\e_l^2+|y_{\xi_l}(x)|^2} \\
&\,+ 8\pi m_l \fr_{(\xi_l)_q}H(x, \xi_l)-\sum_{j\ne l} m_j\chi_j {4\mu_j\fr_{(\xi_l)_q}\mu_j\e_j^2\over \mu_j^2\e_j^2+|y_{\xi_j}(x)|^2} + \sum_{j=1}^kO\lf( \e_j^2|\log\e_j|\rg)\\
\end{split}
\end{equation}
and
\begin{equation}\label{dmv}
\begin{split}
\fr_{m_l }V(x)
=&\,-2 \chi_l\log(\mu_l^2\e_l^2+|y_{\xi_l}(x)|^2) + 8\pi  H(x, \xi_l)- m_l \chi_l {4\mu_l\fr_{m_l}\mu_l\e_l^2+4\mu_l^2\e_l\fr_{m_l }\e_l\over \mu_l^2\e_l^2+|y_{\xi_l}(x)|^2} \\
&\,-\sum_{j\ne l} m_j\chi_j {4\mu_j\fr_{ m_l }\mu_j\e_j^2\over \mu_j^2\e_j^2+|y_{\xi_j}(x)|^2} + \sum_{j=1}^kO\lf( \e_j^2|\log\e_j|^2\rg).
\end{split}
\end{equation}

\begin{claim} \label{expansionenergydm}
The following expansion does
hold
\begin{equation} \label{dmJUt}
\fr_{m_l}[J_\lambda (U)] =8\pi  \la \fr_{m_l}\psi_k(\xi,m)+O(\la^2|\log\la|^2)
\end{equation}
in $C(\Xi\times\ml{M})$ as $\la\to 0^+$.
\end{claim}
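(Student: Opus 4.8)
The plan is to differentiate $J_\la(U)=\frac\la2\int_S|\grad V|_g^2\,dv_g-\frac\la2\int_S e^{\la V^2}\,dv_g$ with respect to $m_l$ and to expand the two resulting integrals by the regional analysis used in the proofs of Claim \ref{expansionenergy} and Lemma \ref{estrr0}. Since $\int_S V\,dv_g\equiv0$ one has $\int_S\fr_{m_l}V\,dv_g=0$, so integrating by parts on the closed surface gives
\[
\fr_{m_l}J_\la(U)=\la\int_S(-\Delta_g V)\,\fr_{m_l}V\,dv_g-\la\int_S\la V e^{\la V^2}\,\fr_{m_l}V\,dv_g
\]
(equivalently, by \eqref{erre}, the right hand side is $-\la\int_S R\,\fr_{m_l}V\,dv_g$, which together with $\|R\|_*\le C\la$ already yields the order $O(\la)$; the leading term requires the refinement below). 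From \eqref{muj}, \eqref{e} and \eqref{del} the relevant parameter derivatives are $\fr_{m_l}\mu_j=O(1)$ and $\fr_{m_l}\e_j=\de_{jl}\frac{\e_j}{m_j}\big(\frac{1}{4\la m_j^2}+1\big)$, so near $\xi_l$ the function $\fr_{m_l}V$ is of size $O(\la^{-1})$ (its precise expansion being given by \eqref{dmv}, \eqref{ttu}, \eqref{muj}, \eqref{e}), while near $\xi_j$ for $j\ne l$ it is $O(1)$.

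The computation then follows the proof of Claim \ref{expansionenergy} with the factor $\fr_{m_l}V$ inserted. For the gradient term I would use \eqref{lav} to write $-\Delta_g V=\sum_j m_j\chi_j e^{-\varphi_j}\e_j^{-2}e^{w_j}-(\text{const})$, the constant dropping against $\fr_{m_l}V$, and then expand $\int_S\chi_j e^{-\varphi_j}e^{U_j}\fr_{m_l}V\,dv_g$ by plugging in \eqref{dmv}, using the Green's function identities behind \eqref{tricky} for the $H(\cdot,\xi_l)$ and $G(\cdot,\xi_j)$ pieces, and Lemma \ref{ieuf} for the localized pieces such as $\chi_l\log(\mu_l^2\e_l^2+|y_{\xi_l}|^2)$. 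The key mechanism is that the $O(\la^{-1})$-size part of $\fr_{m_l}V$ near $\xi_l$ (coming from $\fr_{m_l}\e_l$) contains a component tangent to the dilation mode $Z_{0l}$ of the bubble, which is $e^{U_l}$-orthogonal, $\int_S\chi_l e^{-\varphi_l}\e_l^{-2}e^{w_l}Z_{0l}\,dv_g=O(\e_l^2)$, so these large contributions organize into an $O(\la)$ term once multiplied by the prefactor $\la$. For the potential term I would decompose $S$ into the outer region $S\sm\cup_j B_{r_0}(\xi_j)$, the intermediate annuli $A_{r_0,\de\sqrt{\e_j}}(\xi_j)$ and $A_{\de\sqrt{\e_j},\de\e_j|\log\e_j|}(\xi_j)$, and the cores $B_{\de\e_j|\log\e_j|}(\xi_j)$, exactly as in the proofs of Lemma \ref{estrr0} and Claim \ref{expansionenergy}. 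On the outer region $\la V e^{\la V^2}=O(\la)$ and $\fr_{m_l}V=O(1)$, contributing $O(\la^2)$; on the annuli one combines \eqref{lvlv2}, \eqref{fv}, the bounds $\la w_j=O(1)$ and $w_j(1+\la m_j^2 w_j)\le\frac34 w_j+\beta$, the logarithmic-convexity estimate $(16\la m_j^2 t+2)t\le t+\alpha$ from the proof of Lemma \ref{estrr0}, and \eqref{dmv}, \eqref{ttu}, to get an $O(\la^2|\log\la|)$ contribution; on the cores one uses \eqref{fv}, $w_j=O(|\log\la|)$, the expansions of $e^{\theta_j+\la m_j^2(w_j+\theta_j)^2}$ and of $\fr_{m_l}V$, the same dilation-mode cancellation, and Lemma \ref{ieuf} for the leading integrals.

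Finally I would collect the leading terms of the two integrals, substitute \eqref{muj}, \eqref{e} (and \eqref{del}), and check that the result equals $8\pi\la\,\fr_{m_l}\psi_k(\xi,m)$ with $\psi_k$ as in \eqref{fik} — equivalently, that it coincides with the formal $m_l$-derivative of the already-established expansion \eqref{JUt}, which is just the bookkeeping of the coefficients of $m_l\log m_l^2$, of $m_l$, of $H(\xi_l,\xi_l)$ and of $G(\xi_i,\xi_l)$. All remaining errors are $O(\la^2|\log\la|^2)$ uniformly on $\Xi\times\ml{M}$, since $\e_j^2|\log\e_j|=O(\la)$ by \eqref{e} and each $m_l$-differentiation costs at most a factor $|\log\la|$; this is \eqref{dmJUt}. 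The hard part is precisely this bookkeeping at the right precision: one must expand the two integrals one order more accurately than the naive $\|R\|_*=O(\la)$ bound provides, so that the $O(\la^{-1})$-size pieces of $\fr_{m_l}V$ demonstrably cancel at leading order (this is where the dilation orthogonality and the log-convexity bounds from the proof of Lemma \ref{estrr0} are needed), and one must carefully propagate the numerous $O(\e_j^2|\log\e_j|)$-type remainders through the $m_l$-differentiation.
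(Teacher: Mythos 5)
Your overall route matches the paper's own proof: differentiate $J_\la(U)$ under the integral sign, integrate by parts using $\int_S\fr_{m_l}V\,dv_g=0$, expand the two resulting integrals with Lemma \ref{ieuf} and the Green's-function identity behind \eqref{tricky}, plug in \eqref{dmv}, and then simplify via the explicit parameter-derivative identities \eqref{dmujl}--\eqref{dmulel} coming from \eqref{muj}--\eqref{e}. However, the ``key mechanism'' you single out is incomplete as stated and, taken literally, would leave a spurious divergent term. The $O(\la^{-1})$ piece of $\fr_{m_l}V$ near $\xi_l$ that is induced by $\fr_{m_l}\e_l$ is $-\tfrac{\chi_l}{4\la m_l^2}(Z_{0l}+2)$, because $\mu_l^2\e_l^2/(\mu_l^2\e_l^2+|y_{\xi_l}|^2)=\tfrac14(Z_{0l}+2)$; the $e^{U_l}$-orthogonality to $Z_{0l}$ kills only the $Z_{0l}$ half, while the constant $+2$ half would contribute $-4\pi/(\la m_l^2)$ after integration against $\chi_le^{-\varphi_l}e^{U_l}$. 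That contribution is cancelled not by orthogonality but by the \emph{other} $O(\la^{-1})$-sized piece of $\fr_{m_l}V$, namely the constant $+\tfrac{\chi_l}{2\la m_l^2}$ coming from $-2\chi_l\log(\mu_l^2\e_l^2)$ inside $PU_l$ itself, through $\log\e_l=-\tfrac{1}{8\la m_l^2}+\tfrac12\log(2m_l^2)$. Only after combining these two does the full $O(\la^{-1})$ part of $\fr_{m_l}V$ reduce to $-\tfrac{\chi_l}{4\la m_l^2}Z_{0l}$, at which point orthogonality applies. Similarly, the $i\ne l$ contributions involve a cancellation of $O(1)$ terms between $8\pi G(\cdot,\xi_l)$ (from $PU_l$ away from $\xi_l$) and the $\fr_{m_l}\mu_i$-induced piece, driven by \eqref{dmujl}, not by dilation orthogonality. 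In short, the paper's proof is a direct substitution of the derivative identities for $\mu_j$ and $\e_l$ that makes the large terms cancel algebraically; the $Z_{0l}$-orthogonality is one ingredient, not the whole story, and relying on it alone as you describe would fail. With that correction, the remainder of your plan (regional decomposition of the potential integral, the bounds $\la w_j=O(1)$, $w_j(1+\la m_j^2w_j)\le\tfrac34w_j+\beta$, Lemma \ref{ieuf}, and the final bookkeeping against $\psi_k$) carries through as in the paper.
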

\begin{proof}[\dem]Notice that
$$\fr_{m_l}[J_\lambda (U)] =-\la\int_S\lf[\lap_gV+\la Ve^{\la V^2}\rg]\fr_{m_l}V\, dv_g.$$
Hence, we have that
$$\int_S\lap_gV \,\fr_{m_l}V\, dv_g=- \sum_{i=1}^km_i\int_S\chi_ie^{-\varphi_i}e^{U_i} \fr_{m_l}V$$
in view of $\ds\int_S \fr_{m_l}V=0$. For $i\ne l$, by using Lemma \ref{ieuf} and \eqref{dmv}, we find that
\begin{equation*}
\begin{split}
\int_S \chi_ie^{-\varphi_i}e^{U_i}\,\fr_{m_l}V\, dv_g=&\,\int_{S}\chi_ie^{-\varphi_i}e^{U_i} \bigg[ 8\pi  G(x, \xi_l) - m_i\chi_i {4\mu_i\fr_{ m_l }\mu_i\e_i^2\over \mu_i^2\e_i^2+|y_{\xi_i}(x)|^2} + \sum_{j=1}^kO\lf( \e_j^2|\log\e_j|^2\rg)\bigg]\\
=&\, 	 64\pi^2G(\xi_i,\xi_l)-16\pi m_i{\fr_{m_l}\mu_i\over \mu_i}+\sum_{j=1}^kO\lf( \e_j^2|\log\e_j|^2\rg)=\sum_{j=1}^kO\lf( \e_j^2|\log\e_j|^2\rg),
\end{split}
\end{equation*}
in view of $\chi_j\equiv 0$ in $B_{2r_0}(\xi_i)$ with $j\ne i$ and  from \eqref{muj}
\begin{equation}\label{dmujl}
2{\fr_{m_l}\mu_j\over \mu_j}=8\pi m_j^{-1}G(\xi_l,\xi_j)\quad\text{with $j\ne l$.}
\end{equation}
Also, using again \eqref{dmv}, we find that
\begin{equation*}
\begin{split}
\int_S \chi_le^{-\varphi_l}e^{U_l} \,\fr_{m_l}V\, dv_g=&\, \int_S\chi_le^{-\varphi_l}e^{U_l} \bigg[-2 \chi_l\log(\mu_l^2\e_l^2+|y_{\xi_l}(x)|^2) + 8\pi  H(x, \xi_l)\\
&\,\qquad\qquad\qquad- m_l \chi_l {2 \fr_{m_l}(\mu_l^2\e_l^2)\over \mu_l^2\e_l^2+|y_{\xi_l}(x)|^2} + \sum_{j=1}^kO\lf( \e_j^2|\log\e_j|^2\rg)\bigg]\\
=&\, 	-16\pi-32\pi \log(\mu_l\e_l)+64\pi^2H(\xi_l,\xi_l)-8\pi m_l\pi{\fr_{m_l}(\mu_l^2\e_l^2)\over \mu_l^2\e_l^2}\\
&\,+\sum_{j=1}^kO\lf( \e_j^2|\log\e_j|^2\rg).
\end{split}
\end{equation*}
From the definition \eqref{muj}-\eqref{e}, we get that
\begin{equation}\label{dmulel}
2{\fr_{m_l}\mu_l\over \mu_l}=-{4\over m_l}-{8\pi\over m_l^{2}}\sum_{i\ne l} m_iG(\xi_l,\xi_i)\qquad\text{and}\qquad
-{4\over \e_l}\fr_{m_l}\e_l=-{1\over \la m_j^3}-{4\over m_j}
\end{equation}
so that, we conclude that
\begin{equation*}
\begin{split}
\int_S\lap_gV \,\fr_{m_l}V\, dv_g=&\, - 16\pi \log 8 m_l - 16\pi m_l \log (2 m_l^2) +64\pi^2 m_lH(\xi_l,\xi_l)\\
&\,+64\pi^2\sum_{j=1,j\ne l}^km_j G(\xi_j,\xi_l)+O(\la|\log\la|).
\end{split}
\end{equation*}

On the other hand, combining arguments to deduce \eqref{isvev2} and \eqref{iev2}, using Lemma \ref{ieuf} and the derivatives \eqref{dmv} and \eqref{dmujl}-\eqref{dmulel},  we find that
$$\int_S \la Ve^{\la V^2}\fr_{m_l}V\, dv_g=16\pi m_l+O(\la|\log\la |).$$
Thus, we conclude \eqref{dmJUt}.
\end{proof}

\begin{claim}
The following expansion does hold
\begin{equation} \label{fullJUt0}
\ml{F}_\lambda (\xi , m) = J_\la(U)+\ti\theta_\la(\xi,m)
\end{equation}
in $C(\Xi\times \ml{M})$ and $C^1(\ml{M})$ as $\la\to 0^+$, where the term $\ti\theta_\lambda(\xi, m )$ satisfies
\begin{eqnarray} \label{rlambda}
|\ti\theta_\la(\xi,m)| +\sum_{l=1}^k \la \lf|\fr_{m_l}\ti \theta_\la(\xi,m)\rg|=O(\la^3)
\end{eqnarray}
as $\la\to0^+$ uniformly for points $(\xi, m )\in \Xi\times\ml{M}$.
\end{claim}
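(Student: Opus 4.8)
The plan is to Taylor expand the energy $J_\la$, given by \eqref{energy}, around $U=\sqrt\la\,V$ in the direction $\ti\phi=\sqrt\la\,\phi$, where $\phi=\phi(\xi,m)$ and $c_{ij}=c_{ij}(\xi,m)$ are the functions and constants produced by Proposition \ref{lpnlabis}. Writing
$$\ml{F}_\la(\xi,m)=J_\la(U+\ti\phi)=J_\la(U)+DJ_\la(U)[\ti\phi]+\int_0^1(1-s)\,D^2J_\la(U+s\ti\phi)[\ti\phi,\ti\phi]\,ds,$$
so that $\ti\theta_\la$ equals the sum of the last two terms, the first step is to compute $DJ_\la(U)[\ti\phi]$. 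Since $\int_S\langle\grad U,\grad\ti\phi\rangle_g\,dv_g=\la\int_S\langle\grad V,\grad\phi\rangle_g\,dv_g=\la\int_S(-\Delta_g V)\phi\,dv_g$ and, by the very definition \eqref{erre} of $R$, one has $-\Delta_g V=\la Ve^{\la V^2}-\frac{\la}{|S|}\int_S Ve^{\la V^2}\,dv_g-R$, an integration by parts together with $\int_S\phi\,dv_g=0$ cancels every bubble contribution and leaves the clean identity $DJ_\la(U)[\ti\phi]=-\la\int_S R\,\phi\,dv_g$. Using $\|R\|_*\le C\la$ (Lemma \ref{estrr0}), $\|\phi\|_\infty\le C\la$ (Proposition \ref{lpnlabis}), and the integrability $\int_S\rho\,dv_g\le C$, which comes out of the scaling estimates of Section \ref{approx}, this term is $O(\la^3)$.

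For the quadratic term a direct computation gives $D^2J_\la(U+s\ti\phi)[\ti\phi,\ti\phi]=\la\big(\int_S|\grad\phi|_g^2\,dv_g-\la\int_S(1+2\la(V+s\phi)^2)e^{\la(V+s\phi)^2}\phi^2\,dv_g\big)$. Multiplying the equation $L(\phi)=-[R+N(\phi)]+\sum_{i,j}c_{ij}\Delta_g PZ_{ij}$ from \eqref{pnlabis} by $\phi$, integrating, and using both $\int_S\phi\,\Delta_g PZ_{ij}\,dv_g=0$ and $\int_S\phi\,dv_g=0$, yields the identity $\int_S|\grad\phi|_g^2\,dv_g=\int_S K\phi^2\,dv_g+\int_S(R+N(\phi))\phi\,dv_g$. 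Substituting this back, all contributions involving $R$ cancel and
$$\ti\theta_\la=-\frac{\la}{2}\int_S|\grad\phi|_g^2\,dv_g+\la\int_S K\phi^2\,dv_g+\la\int_S N(\phi)\phi\,dv_g-\la^2\int_0^1(1-s)\int_S(1+2\la(V+s\phi)^2)e^{\la(V+s\phi)^2}\phi^2\,dv_g\,ds.$$
One then estimates each term using $\|\phi\|_\infty\le C\la$, $\|R\|_*\le C\la$ and $\|N(\phi)\|_*\le C\la^2$ (the last from the proof of Proposition \ref{lpnlabis}), the bounds $\int_S K\,dv_g=O(1)$, $\int_S e^{\la V^2}\,dv_g=O(1)$ (see \eqref{iev2}) and $\int_S V^2e^{\la V^2}\,dv_g=O(\la^{-2})$ (obtained by the same scaling as \eqref{iev2}), and $\int_S\rho\,dv_g\le C$; this gives first $\int_S|\grad\phi|_g^2\,dv_g=O(\la^2)$ and then $|\ti\theta_\la|=O(\la^3)$, uniformly in $(\xi,m)\in\Xi\times\ml{M}$.

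Since, by Proposition \ref{lpnlabis}, the map $(\xi,m)\mapsto\phi$ is $C^1$, so is $\ml{F}_\la$, and to obtain the derivative bound I would differentiate the last displayed formula for $\ti\theta_\la$ in $m_l$ term by term. Here $\la$ is a fixed parameter, so $\fr_{m_l}\la=0$ and the dependence on $m_l$ enters only through $\mu_j,\e_j,\phi$ and $c_{ij}$. The delicate point is that each differentiation in $m_l$ costs a power of $\la$: indeed $\fr_{m_l}\e_l\sim\e_l/\la$, so that $\fr_{m_l}V$ (see \eqref{dmv}, \eqref{del}, \eqref{dmujl}--\eqref{dmulel}), $\fr_{m_l}K$ and $\fr_{m_l}(\Delta_g PZ_{ij})$ are each a full power of $\la$ larger than their undifferentiated counterparts. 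However, using the explicit formulas of Section \ref{sec4} one checks that the $O(\la^{-1})$-sized part of each of these is supported near $\xi_l$ at the bubble scale and is a multiple of the dilation mode $Z_{0l}$ --- for instance $\fr_{m_l}\big(\e_l^{-2}e^{w_l}\big)=-(P+Q)\,Z_{0l}\,\e_l^{-2}e^{w_l}$ with $P+Q=\frac{1}{4\la m_l^3}+O(1)$. Hence $\la\int_S|\fr_{m_l}K|\,\phi^2\,dv_g\le C\la\cdot\la^{-1}\|\phi\|_\infty^2\int_S|Z_{0l}|\,\e_l^{-2}e^{w_l}\,dv_g=O(\la^2)$, while $\la\int_S K\,\phi\,\fr_{m_l}\phi\,dv_g\le C\la\|\phi\|_\infty\|\fr_{m_l}\phi\|_\infty\int_S K\,dv_g=O(\la^2)$ by \eqref{cotadphi}; the term $\fr_{m_l}\big(\la\int_S|\grad\phi|_g^2\,dv_g\big)=2\la\int_S(-\Delta_g\phi)\,\fr_{m_l}\phi\,dv_g$ is treated by inserting $-\Delta_g\phi$ from \eqref{pnlabis}, using $\int_S\fr_{m_l}\phi\,dv_g=0$ and the differentiated orthogonality $\int_S(\Delta_g PZ_{ij})\,\fr_{m_l}\phi\,dv_g=-\int_S\fr_{m_l}(\Delta_g PZ_{ij})\,\phi\,dv_g$ together with $\|\fr_{m_l}(\Delta_g PZ_{ij})\|_{L^1}=O(\la^{-1})$, $\|\phi\|_\infty=O(\la)$ and $|c_{ij}|=O(\la)$; the two remaining terms are smaller still. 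In every case the $O(\la^{-1})$ loss is matched by a compensating power of $\la$ coming from $\|\phi\|_\infty$, $\|N(\phi)\|_*$ or an explicit prefactor, so $|\fr_{m_l}\ti\theta_\la|=O(\la^2)$ and therefore $\sum_{l=1}^k\la\,|\fr_{m_l}\ti\theta_\la|=O(\la^3)$.

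The step I expect to be the main obstacle is precisely this last bookkeeping: since each $m_l$-derivative produces a $\la^{-1}$, one must verify in each of the finitely many terms that the loss is absorbed, which requires controlling not just the sizes but also the pointwise profiles --- concentration scale and $Z_{0l}$-structure --- of $\fr_{m_l}V$, $\fr_{m_l}K$ and $\fr_{m_l}(\Delta_g PZ_{ij})$. These in turn rely on the specific choices \eqref{muj}--\eqref{e} and on the derivative formulas of Section \ref{sec4}. Apart from this, all the estimates are routine consequences of Lemma \ref{ieuf}, the concentration bounds of Section \ref{approx}, and Propositions \ref{p2} and \ref{lpnlabis}.
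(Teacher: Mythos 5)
Your proof is correct and follows essentially the same Taylor-expansion strategy as the paper, with one organizational difference: the paper expands $J_\la$ around $U+\ti\phi$ and exploits $DJ_\la(U+\ti\phi)[\ti\phi]=0$ (which follows immediately from the projected equation \eqref{pnlabis} and the orthogonality $\int_S\phi\,\Delta_g PZ_{ij}=0$) to reduce $\ti\theta_\la$ to a single integral remainder, whereas you expand around $U$, which forces you to compute $DJ_\la(U)[\ti\phi]$ separately. Your observation that $DJ_\la(U)[\ti\phi]=-\la\int_S R\,\phi\,dv_g$ is a correct and clean substitute for the paper's vanishing first-variation, and after that both arguments hinge on exactly the same identity $\int_S|\grad\phi|_g^2=\int_S K\phi^2+\int_S(R+N(\phi))\phi$, the same norm bounds $\|R\|_*,\|\phi\|_\infty\le C\la$, $\|N(\phi)\|_*\le C\la^2$, $\|K-f'(V+t\phi)\|_*\le C\la$, and the integrability $\int_S\rho\le C$. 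The paper's base-point choice is marginally slicker because it packages $\la\int_S K\phi^2$ and $-\la\int_S f'(V+t\phi)\phi^2$ into the single small quantity $\la\int_S(K-f'(V+t\phi))\phi^2$ at once, but your version reaches the same $O(\la^3)$ by estimating the three pieces separately, which is fine since each is already $O(\la^3)$ on its own. For the $C^1(\ml{M})$ bound, the paper differentiates its one-term representation \eqref{A} under the integral and cites the derivative bounds of Appendix B ($\|\fr_{m_l}\phi\|_\infty\le C$, $\|\fr_{m_l}R\|_*\le C\la|\log\e_l|$, $\|\fr_{m_l}K\|_*\le C|\log\e_l|$, $\|\fr_{m_l}N(\phi)\|_*\le C\la^2|\log\e_l|$), arriving at $O(\la^3|\log\e_l|)=O(\la^2)$; your sketch differentiates your own (equivalent) formula, correctly identifies that each $m_l$-derivative costs a factor $|\log\e_l|\sim\la^{-1}$, and verifies that this loss is absorbed term by term by the compensating powers of $\|\phi\|_\infty$, $\|N(\phi)\|_*$, and the explicit $\la$-prefactors --- which is the same bookkeeping the paper relegates to Appendix B. In short, a correct alternative of the same type, trading a slightly larger number of terms for a slightly more elementary starting point.
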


\begin{proof}[\dem]
Taking into account that $DJ_\la(U +
\ti \phi)[\ti\phi]=0$, a Taylor expansion, $\sqrt{\la}\, V= U $ and $\sqrt{\la }\, \phi (\xi,m) = \ti\phi(\xi,m)$, the definition of $K$ and \eqref{pnlabis} gives us
\begin{equation}\label{A}
\begin{split}
J_\la(U+\ti\phi)-J_\la(U)
& = - \int_0\p1 D^2J_\la\big(U+t\ti \phi \big)\big[\ti\phi,\ti\phi\big]\,t\,dt,\\
&=-\la\int_0\p1  \lf( \int_{S} [R + N(\phi)]\, \phi  +  \int_{S}
 \lf[ K - f'(V+t\phi) \rg]\phi^2 \rg)\,t\,dt. 
\end{split}
\end{equation}
Therefore, we get
$$
J_\la(U+\ti \phi)-J_\la(U)=O(\la^3),
$$
since $\|R\|_*\le C\la$, $\| N(\phi)\|_*\le
C[\la\|\phi\|_\infty+ \|\phi\|_\infty^2]$, for some $s\in (0,t)$
$$\|K - f'(V+t \phi)\|_*\le \|f''(V+ s \phi)\, \phi\|_*+\|K-f'(V)\|_*\le C[\|\phi\|_\infty+\la]$$
and $\|\phi\|_\infty\le C\la$.

Let us differentiate with respect to $\be=m_l$. We use
representation \eqref{A} and differentiate directly under the
integral sign, thus obtaining, for each $l=1,\dots,k$
\begin{equation*}
\begin{split}
\fr_\be\big[J_\la&(U+\ti\phi)-J_\la(U) \big]=-\la\int_0\p1  \lf( \int_{S} \lf\{[\fr_\be R + \fr_\be N(\phi)]\, \phi +[R + N(\phi)]\, \fr_\be \phi\rg\}   \rg)\,t\,dt\\
 &\quad\qquad \qquad-\la\int_0\p1  \lf( \int_{S}  \lf\{ \fr_\be K - \fr_\be[f'(V+t\phi) ] \rg\} \phi^2  +  \lf[ K - f'(V+t\phi) \rg]2\phi\,\fr_\be \phi \rg)\,t\,dt.
\end{split}
\end{equation*}
Using Proposition \ref{lpnlabis} and the computations in the Appendix B, we get that
\begin{equation*}
\begin{split}
\big|\fr_{m_l}\big[J_\la&(U+\ti\phi)-J_\la(U) \big] \big|
\le C\la\bigg( \lf[\la\log\e_l + \la^2\log\e_l + \la\|\fr_{m_l}\phi\|_\infty \rg]\, \|\phi\|_\infty \\
& \quad\qquad\qquad+\lf[\la+ \la\|\phi\|_\infty+\|\phi\|_\infty^2\rg]\,\| \fr_{m_l} \phi\|_\infty
+ \la\log \e_l \| \phi\|_\infty^2  +  \lf[ \|\phi\|_\infty+\la \rg]\, \|\phi\|_\infty\,\|\fr_{m_l} \phi\|_\infty \bigg).
\end{split}
\end{equation*}
Thus, we conclude
$$
\fr_{m_{l}} \lf[J_\la(U+\ti\phi)-J_\la(U)\rg]=O\big( \la^3\;|\log\e_l|\big)=O(\la^2),\qquad l=1,\dots,k
$$
Now, taking $\ti\theta_\la(\xi,m)=\ml{F}_\la(\xi,m)-J_\la(U)$, we have shown \eqref{rlambda} as $\la\to0^+$. The continuity in $(\xi,m)$ of all these expressions is inherited from that of $\phi$ and its derivatives in $\xi$ and $m$ in the $L^\infty$ norm.
\end{proof}

\noindent Now, we are going to study the derivatives of $\ml{F}_\la$ with respect to $\beta=(\xi_l)_q$ with $q=1,2$ and $l=1,\dots,k$. Due to the estimates \eqref{cotadphi} given in Proposition \ref{lpnlabis} we have to address this expansion in a different way. We shall use similar ideas first presented in \cite{EMP} and also used in \cite{CDF}.

\begin{claim} \label{expansionenergydxi}
The following expansion does
hold
\begin{equation} \label{dxiJUt}
\fr_{(\xi_l)_q}\ml{F}_\lambda (\xi,m) =8\pi  \la \fr_{(\xi_l)_q}\psi_k(\xi,m)+O(\la^2|\log\la|^2)
\end{equation}
in $C(\Xi\times\ml{M})$ as $\la\to 0^+$.
\end{claim}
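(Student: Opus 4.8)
The plan is to reproduce for the $\xi$-variable the argument already used for the $m$-variable in Claim \ref{expansionenergydm}, except that now the a priori bound $\|\fr_{(\xi_l)_q}\phi\|_\infty\le C\la/\e_l$ from \eqref{cotadphi} is too weak to be inserted naively into a Taylor-type expansion of $J_\la(U+\ti\phi)-J_\la(U)$; the loss of a factor $\e_l^{-1}$ would destroy the claimed $O(\la^2|\log\la|^2)$. The device to circumvent this — borrowed from \cite{EMP} and \cite{CDF} — is to avoid differentiating $\phi$ in the $\xi$ direction altogether, by exploiting the orthogonality conditions $\int_S\phi\,\Delta_g PZ_{ij}\,dv_g=0$ and the equation \eqref{pnlabis} that $\phi$ solves, so that the troublesome terms involving $\fr_{(\xi_l)_q}\phi$ either vanish or get re-expressed through quantities that are controlled without the bad power of $\e_l$.

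Concretely, first I would write $\ml{F}_\la(\xi,m)=J_\la(U)+\big[\ml{F}_\la(\xi,m)-J_\la(U)\big]$ and recall from \eqref{A} that the difference equals $-\la\int_0^1\big(\int_S[R+N(\phi)]\phi+\int_S[K-f'(V+t\phi)]\phi^2\big)t\,dt$. Differentiating in $\beta=(\xi_l)_q$ produces terms with $\fr_\beta\phi$; for those I would integrate against $\phi$ and use that $L(\phi)$ equals $-[R+N(\phi)]$ plus a combination of $\Delta_g PZ_{ij}$, together with $\int_S\phi\,\Delta_g PZ_{ij}=0$ and $\fr_\beta\!\int_S\phi\,\Delta_g PZ_{ij}=0$, to trade $\int_S(\cdots)\fr_\beta\phi$ for $\int_S(\fr_\beta(\cdots))\phi$ plus terms involving $\fr_\beta(\Delta_g PZ_{ij})$ and the coefficients $c_{ij}$ — all of which carry only the harmless weights $\e_l^{-1}$ that are compensated by the $\e_l$ already present in the $*$-norm estimates of $\fr_\beta R$, $\fr_\beta N(\phi)$, $\fr_\beta K$, $\fr_\beta f'(V+t\phi)$. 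Using $\|R\|_*\le C\la$, $\|\phi\|_\infty\le C\la$, $\sum|c_{ij}|\le C\la$, the bound $\e_l\|\fr_\beta\phi\|_\infty\le C\la$, and the standard $*$-norm estimates for the $\xi$-derivatives of $R$, $K$ and of the nonlinear terms (computed in Appendix B), one checks that $\fr_{(\xi_l)_q}[\ml{F}_\la(\xi,m)-J_\la(U)]=O(\la^2|\log\la|^2)$ — or better, $O(\la^3|\log\la|)$ — uniformly in $\Xi\times\ml{M}$.

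It then remains to expand $\fr_{(\xi_l)_q}[J_\la(U)]$. Writing $\fr_{(\xi_l)_q}[J_\la(U)]=-\la\int_S[\Delta_g V+\la Ve^{\la V^2}]\,\fr_{(\xi_l)_q}V\,dv_g$ and using $\int_S\fr_{(\xi_l)_q}V=0$, this becomes $\la\sum_{i}m_i\int_S\chi_i e^{-\varphi_i}e^{U_i}\fr_{(\xi_l)_q}V\,dv_g-\la\int_S\la Ve^{\la V^2}\fr_{(\xi_l)_q}V\,dv_g$, to which I would apply Lemma \ref{ieuf} together with the explicit expansions \eqref{dxiv} of $\fr_{(\xi_l)_q}V$ and the identities \eqref{dmujl}–\eqref{dmulel}-type relations obtained by differentiating \eqref{muj} in $\xi$, namely $\ds 2\fr_{(\xi_l)_q}\mu_j/\mu_j = 8\pi m_j^{-1}\fr_{(\xi_l)_q}G(\xi_l,\xi_j)$ for $j\ne l$ and $\ds 2\fr_{(\xi_l)_q}\mu_l/\mu_l = 8\pi\fr_{(\xi_l)_q}H(\xi_l,\xi_l)+8\pi m_l^{-1}\sum_{i\ne l}m_i\fr_{(\xi_l)_q}G(\xi_l,\xi_i)$. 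The leading terms coming from $-4m_l\chi_l\log|y_{\xi_l}(x)|$ and from $8\pi m_l\fr_{(\xi_l)_q}H(x,\xi_l)$ reassemble precisely into $8\pi\la\fr_{(\xi_l)_q}\psi_k(\xi,m)$, with the $\mu_j$-derivative contributions cancelling the spurious local pieces exactly as the constants did in Claim \ref{expansionenergy}; all remainders are $O(\la^2|\log\la|^2)$. Combining the two parts gives \eqref{dxiJUt}.

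I expect the main obstacle to be the bookkeeping in the first step: verifying that, after invoking the orthogonality relations and $\fr_\beta$ of \eqref{pnlabis}, every surviving term genuinely pairs a factor with a $*$-norm of size $O(\la)$ (coming from $R$, $N(\phi)$, $\phi$, or $c_{ij}$) against a factor of size at most $O(\e_l^{-1})$ (from $\fr_\beta$ of the bubble-type objects), so that no uncompensated $\e_l^{-1}$ survives. Once that cancellation structure is made precise — which is exactly the mechanism of \cite{EMP} — the rest is the same kind of integral asymptotics via Lemma \ref{ieuf} already carried out for $J_\la(U)$ and its $m$-derivative.
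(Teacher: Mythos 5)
Your plan reuses, for the $\xi$-variable, the decomposition $\ml{F}_\la = J_\la(U) + \big[\ml{F}_\la - J_\la(U)\big]$ that the paper exploits for the $m$-derivative, but the paper makes a point of abandoning that decomposition for $\xi$: the claim preceding this one is stated only in $C^1(\ml{M})$ (not $C^1(\Xi\times\ml{M})$), and the paper says explicitly that ``due to the estimates \eqref{cotadphi} we have to address this expansion in a different way.'' The paper's proof never forms $\ml{F}_\la - J_\la(U)$. It differentiates $J_\la(\sqrt\la[V+\phi])$ directly, uses the equation \eqref{pnlabis} to rewrite $\fr_{(\xi_l)_q}\ml{F}_\la = -\la\sum_{i,j}c_{ij}\int_S\Delta_g PZ_{ij}\,[\fr_{(\xi_l)_q}V+\fr_{(\xi_l)_q}\phi]\,dv_g$, replaces $\fr_{(\xi_l)_q}$ by the spatial derivative $\fr_{x_q}$ on the $j=l$ pieces, and closes with a Pohozaev-type identity $\int_{B_{2r_0}(\xi_l)}[\Delta_g v_\xi+\la v_\xi e^{\la v_\xi^2}]\,\nabla_g v_\xi\,dv_g = 8\pi\nabla_{\xi_l}\psi_k(\xi,m) + O(\la|\log\la|)$ for $v_\xi=V+\phi$; the needed cancellations are built into that identity, not into $*$-norm bookkeeping.

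Your proposed repair --- integrate the $\fr_\beta\phi$ pieces against $\phi$, and trade $\int_S(\cdots)\fr_\beta\phi$ for $\int_S\fr_\beta(\cdots)\phi$ via $\int_S\phi\,\Delta_g PZ_{ij}\,dv_g=0$ --- addresses only the $\fr_\beta\phi$ terms, not the other sources of the bad weight. Differentiating the Taylor-remainder representation of $\ml{F}_\la - J_\la(U)$ in $\beta=(\xi_l)_q$ immediately produces $\int_S[\fr_\beta R+\fr_\beta N(\phi)]\phi\,dv_g$ and $\int_S[\fr_\beta K - \fr_\beta f'(V+t\phi)]\phi^2\,dv_g$. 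Appendix B gives $\|\fr_{\xi_l}R\|_*\le C\la/\e_l$ and $\|\fr_{\xi_l}K\|_*\le C/\e_l$, while $\|\phi\|_\infty\le C\la$ and $\int_S\rho\,dv_g\le C$, so these contributions are only $O(\la^2/\e_l)$. Since $\e_l^{-1}=(2m_l^2)^{-1/2}\exp(1/(8\la m_l^2))$ is exponentially large as $\la\to0$, this is nowhere near the asserted $O(\la^3|\log\la|)$ nor even $O(\la^2|\log\la|^2)$. Integrating $\int_S\fr_\beta R\cdot\phi\,dv_g$ by parts and substituting $\Delta_g\phi$ from the equation only produces terms such as $\int_S\fr_\beta V\cdot R\,dv_g$ and $\int_S[f'(V)-K]\,\fr_\beta V\cdot\phi\,dv_g$ that still pair a single uncompensated $\|\fr_\beta V\|_\infty\sim\e_l^{-1}$ with one factor of $\la$ or $\la^2$, so the orthogonality device does not close the gap. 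This is exactly why the paper bypasses the decomposition and invokes the Pohozaev mechanism for the $\xi$-derivative.
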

\begin{proof}[\dem]
Let us differentiate the function $\ml{F}_\la(\xi,m)$ with respect to $(\xi_l)_q$ with $q=1,2$ and $l=1,\dots,k$. Since $\sqrt\la \,V(\xi,m) =U(\xi,m)$ and $\sqrt\la\,\phi(\xi,m) = \ti\phi(\xi,m)$, we can differentiate directly $J_\la\big(\sqrt\la[V+\phi] \big)$ (under the integral sign), so that integrating by parts we get
\begin{equation*}
\begin{split}
\fr_{(\xi_l)_q}\ml{F}_\la(\xi,m) 
=&\,-\la\int_S\lf[\Delta_g(V+\phi) + \la( V+\phi) e^{\la(V+\phi)^2}\rg]\,\lf[\fr_{(\xi_l)_q}V + \fr_{(\xi_l)_q}\phi\rg]\,dv_g\\
=&\, -\la \sum_{i=0}^{2}
\sum_{j=1}^k c_{ij} \int_{S} \Delta_gPZ_{ij}\,\lf[\fr_{(\xi_l)_q}V + \fr_{(\xi_l)_q}\phi\rg]dv_g,
\end{split}
\end{equation*}
since $\ds \int_{S} \big(\fr_{(\xi_l)_q}V + \fr_{(\xi_l)_q}\phi\big) dv_g=0$. From the orthogonal conditions we find that for $j\ne l$
\begin{equation*}
c_{ij} \int_{S} \Delta_gPZ_{ij}\,\fr_{(\xi_l)_q}\phi\, dv_g=- c_{ij}\int_{S} \fr_{(\xi_l)_q}\lf[\Delta_gPZ_{ij}\rg]\,\phi\, dv_g= O\lf(\max_{ij}|c_{ij}|\, \|\phi\|_\infty\rg)=O(\la^2)
\end{equation*}
in view of $\ds\int_S\rho(x)\, dv_g\le C$, $\|\fr_{(\xi_l)_q}\Delta_gPZ_{ij}\|_*\le C$ for $j\ne l$ and \eqref{cotaphi}. For $j=l$, we compare $\fr_{(\xi_l)_q}\Delta_gPZ_{il}$ with derivatives $\fr_{x_q}\Delta_gPZ_{il}$ to get
\begin{equation*}
 \int_{S} \Delta_gPZ_{il}\,\fr_{(\xi_l)_q}\phi\, dv_g=- \int_{S} \fr_{(\xi_l)_q}\lf[\Delta_gPZ_{il}\rg]\,\phi\, dv_g= \int_{S} \fr_{x_q}\lf[\Delta_gPZ_{il}\rg]\,\phi\, dv_g +O(\la)
\end{equation*}
Thus, integrating by parts we deduce that
\begin{equation*}
\begin{split}
\sum_{i=0}^{2}
\sum_{j=1}^k c_{ij} \int_{S} \Delta_gPZ_{ij}\,\fr_{(\xi_l)_q}\phi\, dv_g
&=-\sum_{i=0}^{2}
 c_{il}\int_{S} \Delta_gPZ_{il}\,\fr_{x_q}\phi\, dv_g+ O\lf(\la^2\rg)\\
&= -\int_{B_{2r_0}(\xi_l)}\lf[\Delta_g(V+\phi) + \la( V+\phi) e^{\la(V+\phi)^2}\rg]\,\fr_{x_q}\phi\,dv_g+O(\la^2),
\end{split}
\end{equation*}
On the other hand, from \eqref{dxiv}, we obtain that for $j\ne l$ in $B_{2r_0}(\xi_j)$
$$\fr_{(\xi_l)_q}V(x)=8\pi m_l \fr_{(\xi_l)_q}G(x, \xi_l)-m_j\chi_j {4\mu_j\fr_{(\xi_l)_q}\mu_j\e_j^2\over \mu_j^2\e_j^2+|y_{\xi_j}(x)|^2} + \sum_{i=1}^kO\lf( \e_i^2|\log\e_i|\rg)$$
and consequently
\begin{equation*}
\begin{split}
\int_{S} \Delta_gPZ_{ij}\,\fr_{(\xi_l)_q} V \, dv_g&=- \int_{B_{2r_0}(\xi_j)} \chi_j e^{-\varphi_j}\e_j^{-2}e^{w_j}Z_{ij} \bigg[8\pi m_l \fr_{(\xi_l)_q}G(x, \xi_l)-m_j\chi_j {4\mu_j\fr_{(\xi_l)_q}\mu_j\e_j^2\over \mu_j^2\e_j^2+|y_{\xi_j}(x)|^2} \\
&\qquad+ \sum_{i=1}^kO\lf( \e_i^2|\log\e_i|\rg)\bigg]\,\, dv_g
\end{split}
\end{equation*}
From the definition of $Z_{ij}$ it follows that for $j\ne l$
$$\int_{S} \Delta_gPZ_{ij}\,\fr_{(\xi_l)_q} V \, dv_g=\begin{cases}
O(1)& \text{if } i=0,\\
O(\e_j)&\text{if } i=1,2.
\end{cases}$$
For $j=l$, we compare $\fr_{(\xi_l)_q}V$ with derivatives $\fr_{x_q}V$ to get
\begin{equation*}
\int_{S} \Delta_gPZ_{il}\,\fr_{(\xi_l)_q} V\, dv_g=- \int_{S} \Delta_gPZ_{il} \,\fr_{x_q}V\, dv_g+O(\la).
\end{equation*}
Hence, taking into account that $|c_{ij}|\le C\la$ for all $i=0,1,2$ and $j=1,\dots,k$, we obtain that
\begin{equation*}
\begin{split}
\sum_{i=0}^{2}
\sum_{j=1}^k c_{ij} \int_{S} \Delta_gPZ_{ij}\,\fr_{(\xi_l)_q} V\, dv_g
&=\sum_{i=0}^{2} c_{il}\int_{S}  \Delta_gPZ_{il}\, \fr_{(\xi_l)_q}V\, dv_g+O(\la)\\
&=-\sum_{i=0}^{2} c_{il}\int_{S}  \Delta_gPZ_{il}\, \fr_{x_q}V\, dv_g+O(\la)\\
&=-\int_{B_{2r_0}(\xi_l)} \lf[\Delta_g(V+\phi) + \la( V+\phi) e^{\la(V+\phi)^2}\rg]\fr_{x_q}V+O(\la).
\end{split}
\end{equation*}
Therefore, denoting $v_\xi=V+\phi$ we get that
\begin{equation*}
\begin{split}
\fr_{(\xi_l)_q}\ml{F}_\la(\xi,m)
=&\,\la\int_{B_{2r_0}(\xi_l)} \lf[\Delta_g v_\xi + \la v_\xi e^{\la v_\xi^2}\rg]\,\fr_{x_q} v_\xi\,dv_g +O(\la)
\end{split}
\end{equation*}
Hence, using a Pohozaev type of identity as used in \cite[Proof of Lemma 5.3]{EMP} or \cite[Proof of Proposition 3.2]{CDF} and the expansion
$$V(x)+\phi(x)= m_l\lf[ -4\chi_l(x)\log|y_{\xi_l}(x)| + 8\pi H(x,\xi_l)\rg]+\sum_{j=1,j\ne l}^k 8\pi G(x,\xi_j)+O(\la)$$
uniformly on compact subsets of $\bar B_{2r_0}(\xi_l)\sm \{\xi_l\}$ in $C^1$-sense we obtain the following expansion
$$\int_{B_{2r_0}(\xi_l)} \lf[\Delta_g v_\xi + \la v_\xi e^{\la v_\xi^2}\rg]\,\nabla_g v_\xi\,dv_g=8\pi \nabla_{\xi_l}\psi(\xi,m)+O(\la|\log\la|).$$
This finishes the proof.
\end{proof}

\noindent Therefore, taking into account the expansions \eqref{JUt}, \eqref{dmJUt}, \eqref{fullJUt0} and \eqref{dxiJUt} we conclude the proof of Proposition \ref{fullexpansionenergy}.


\begin{appendices}
\section{\hspace{-0.5cm}: The linear theory}\label{appeA}

\noindent In this section, we will study the linearized operator
under suitable orthogonality conditions. Throughout the main part
of this section we only assume that the numbers $\mu_j$,
$j=1,\dots,k$ satisfy
$C_0^{-1}\le\mu_j\le C_0$ for all $j=1,\dots,k$ independently of
$\la$ and that the points $\xi_j\in S$, $j=1,\dots,k$ are
uniformly separated from each other, namely,
$\xi=(\xi_1,\dots,\xi_k)\in \Xi$.

\medskip
For $\mu,\e,m>0$ define the function
\begin{equation}\label{ro}
\begin{split}
\rho_{\mu,\e,m}(y)=&\,\chi\Big({r_0|y|\over \delta\e|\log\e|^2}\Big)\Big(1+\big|w_\mu\big({y\over\e}\big)\big|+\big|w_\mu\big({y\over\e}\big)\big|^2\Big)\e^{-2} e^{w_\mu\big({y\over\e}\big)}\\
&+\Big[1-\chi\Big({2r_0|y|\over \delta\e|\log\e|^2}\Big)\Big]\Big[\big\{1+\big|\log|y|\big|\big\}e^{\la m^2w_\mu({y\over\e})}+\la^{-1}\Big]\e^{-2} e^{w_\mu({y\over\e})},
\end{split}
\end{equation}
with $y\in\R^2$, so that, $\rho_j(x)=\rho_{\mu_j,\e_j,m_j}(y_{\xi_j}(x))$ for $x\in
B_{2r_0}(\xi_j)$, with $\rho_j$ defined in \eqref{roj}.
First, we will prove the following result.

\begin{lem}
There exist positive constants $C,\e_0>0$ with $C=C(\de,r)$ such that for all $0<\e<\e_0$ the solution $\psi$ to problem
$$
\begin{array}{l}
-\Delta \psi =\ds \rho_{\mu,\e,m}, \quad
\de\e<|y| < r, \cr\cr \psi (y) = 0 \hbox{ for }|y| = \de\e,\quad |y| =r
 \end{array}$$
 satisfies the estimate $\|\psi\|_\infty\le C.$ Furthermore, $C(\de,r)$ could be smaller if we choose $\de$ large and $r$ small.
\end{lem}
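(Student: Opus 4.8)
The plan is to exploit the radial symmetry of the problem. Since the annulus $\{\de\e<|y|<r\}$ and the right-hand side $\rho_{\mu,\e,m}$ --- which is a function of $|y|$ alone, and is nonnegative --- are rotation invariant, the solution $\psi$ is radial by uniqueness, and $-\Delta\psi=\rho_{\mu,\e,m}$ becomes the ODE $(\sigma\psi')'=-\sigma\,\rho_{\mu,\e,m}(\sigma)$ on $[\de\e,r]$ with $\psi(\de\e)=\psi(r)=0$. Setting $M(\sigma)=\int_{\de\e}^{\sigma}s\,\rho_{\mu,\e,m}(s)\,ds$, this is solved explicitly by
\[
\psi(\sigma)=\int_{\de\e}^{\sigma}\frac{C_1-M(\tau)}{\tau}\,d\tau,\qquad
C_1=\frac{1}{\log(r/\de\e)}\int_{\de\e}^{r}\frac{M(\tau)}{\tau}\,d\tau .
\]
The crude bound $\|\psi\|_\infty\le\big(\int_{\de\e}^{r}s\,\rho_{\mu,\e,m}\,ds\big)\log(r/\de\e)$ obtained from $0\le M(\tau)\le M(r)$ and $0\le C_1\le M(r)$ is not good enough, since $\log(r/\de\e)\sim|\log\e|\to\infty$; the point is that the two terms in the formula cancel to leading order.

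To extract this cancellation I would introduce the tail mass $N(\tau):=M(r)-M(\tau)=\int_{\tau}^{r}s\,\rho_{\mu,\e,m}(s)\,ds\ge0$. Inserting $M=M(r)-N$ into the expressions for $C_1$ and $\psi$, the constant $M(r)$ drops out and one obtains
\[
\psi(\sigma)=\int_{\de\e}^{\sigma}\frac{N(\tau)}{\tau}\,d\tau-\frac{\log(\sigma/\de\e)}{\log(r/\de\e)}\int_{\de\e}^{r}\frac{N(\tau)}{\tau}\,d\tau .
\]
Both integrals are nonnegative and $\log(\sigma/\de\e)\le\log(r/\de\e)$ for $\sigma\in[\de\e,r]$, so $\|\psi\|_\infty\le\int_{\de\e}^{r}\tau^{-1}N(\tau)\,d\tau$, which by Tonelli equals $\int_{\de\e}^{r}s\,\rho_{\mu,\e,m}(s)\log\frac{s}{\de\e}\,ds$. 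Hence the lemma reduces to the single weighted estimate $\int_{\de\e}^{r}s\,\rho_{\mu,\e,m}(s)\log\frac{s}{\de\e}\,ds\le C(\de,r)$.

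The final step is to prove this weighted estimate by splitting the integral according to the two cut-offs in \eqref{ro}: the first summand of $\rho_{\mu,\e,m}$ lives in $\{|y|\le2\de\e|\log\e|^2\}$ and the second in $\{|y|\ge\tfrac12\de\e|\log\e|^2\}$, and I bound each over its support. For the first one I use the explicit form $w_\mu(y/\e)=\log\frac{8\mu^2\e^4}{(\mu^2\e^2+|y|^2)^2}$: since $\mu$ is bounded above and $\de$ is large we have, for $|y|\ge\de\e$, both $\e^{-2}e^{w_\mu(y/\e)}\le8\mu^2\e^2|y|^{-4}$ and $|w_\mu(y/\e)|\le4\log(|y|/\e)+C$, so after the change of variables $v=|y|/\e$ this contribution is bounded by $C\int_{\de}^{\infty}(1+\log v)^{3}v^{-3}\,dv$, a convergent integral that vanishes as $\de\to\infty$. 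For the second one, $e^{\la m^2 w_\mu(y/\e)}\le1$, while $1+|\log s|\le C|\log\e|$ and $\log\frac{s}{\de\e}\le C|\log\e|$ on the relevant range, and --- crucially --- the relation \eqref{e} fixing $\la$ in terms of $\e$ gives $\la^{-1}=8m^2|\log\e|+O(1)\le C|\log\e|$ (recall $m$ stays in a fixed compact subset of $(0,+\infty)$); hence this contribution is at most $C|\log\e|^{2}\e^{2}\int_{\frac12\de\e|\log\e|^2}^{r}s^{-3}\,ds\le C\de^{-2}|\log\e|^{-2}\to0$. Adding the two gives the bound, with a constant that tends to $0$ as $\de\to\infty$ and that is manifestly smaller when $r$ is taken smaller, which is the last assertion of the statement. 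The main obstacle is exactly this last step: the bookkeeping of the cut-off supports, the accounting of the logarithmic factors coming both from $1+|w_\mu|+w_\mu^2$ and from the weight $\log\frac{s}{\de\e}$, and the use of \eqref{e} to absorb $\la^{-1}$ into $|\log\e|$.
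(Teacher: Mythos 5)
Your reduction is correct up to the bound $\|\psi\|_\infty\le\int_{\de\e}^r\tau^{-1}N(\tau)\,d\tau=\int_{\de\e}^r s\,\rho_{\mu,\e,m}(s)\log\frac{s}{\de\e}\,ds$, and the Tonelli identity is right; in effect you rediscover (via an explicit quadrature instead of the paper's substitution $\varphi(t)=\psi(e^t)$ and concavity) the same integral representation the paper uses. But that final upper bound discards too much cancellation, and the weighted estimate you then try to prove is in fact false: the integral $\int_{\de\e}^r s\,\rho_{\mu,\e,m}(s)\log\frac{s}{\de\e}\,ds$ grows like $|\log\e|$ as $\e\to 0$, because the weight $\log\frac{s}{\de\e}$ is of size $|\log\e|$ uniformly on $[\sqrt\e,r]$ whereas $s\,\rho_{\mu,\e,m}(s)$ is of order one (not of order $\e^2$) there.

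The concrete error is the inequality $e^{\la m^2 w_\mu^2(y/\e)}\le 1$ in the outer region. The display \eqref{ro} has a typo (it is $w_\mu^2$, not $w_\mu$; this is clear from \eqref{roj}, from the fact that $\rho_j(x)=\rho_{\mu_j,\e_j,m_j}(y_{\xi_j}(x))$, and from the paper's own use of $e^{\la m^2 w_\mu^2}$ in its estimate). Since $w_\mu(y/\e)<0$ for $|y|\ge\de\e$, we have $e^{\la m^2 w_\mu^2}\ge 1$, not $\le 1$, and near $|y|=r$ it is very large: from $w_\mu(y/\e)=4\log\e-4\log|y|+O(1)$ and the relation \eqref{e} (so that $8\la m^2|\log\e|=1+O(\la)$) one gets $\la m^2 w_\mu^2(y/\e)\approx 2|\log\e|$ at $|y|\sim r$, hence $e^{\la m^2 w_\mu^2}\approx\e^{-2}$, and combining with $\e^{-2}e^{w_\mu}\approx 8\mu^2\e^2|y|^{-4}$ gives $\e^{-2}e^{w_\mu+\la m^2 w_\mu^2}=O(1)$, not $O(\e^2)$. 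Consequently $s\,\rho_{\mu,\e,m}(s)=O(1)$ as $s\uparrow r$, and multiplying by $\log\frac{s}{\de\e}\sim|\log\e|$ and integrating over a fixed neighborhood of $r$ already produces an $O(|\log\e|)$ contribution. So the last step of your proof cannot work as written.

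The missing idea is the one the paper uses: at the interior maximum $\sigma_0$ one has the two representations
\begin{equation*}
\psi(\sigma_0)=\int_{\de\e}^{\sigma_0}s\,\rho_{\mu,\e,m}(s)\log\frac{s}{\de\e}\,ds=\int_{\sigma_0}^{r}s\,\rho_{\mu,\e,m}(s)\log\frac{r}{s}\,ds,
\end{equation*}
so, splitting at $\de\sqrt\e$, one may bound $\|\psi\|_\infty$ by $\max\bigl\{\int_{\de\e}^{\de\sqrt\e}s\,\rho\log\frac{s}{\de\e}\,ds,\ \int_{\de\sqrt\e}^{r}s\,\rho\log\frac{r}{s}\,ds\bigr\}$. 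Near the inner boundary the weight $\log\frac{s}{\de\e}$ is small, and near the outer boundary the weight $\log\frac{r}{s}$ is small; each piece is then $O(1)$ (and small for $\de$ large, $r$ small). Your derivation already contains both ingredients (the exact formula for $\psi$, the Tonelli reversal); you just need to keep the $\int_{\sigma_0}^r$ version of the identity on the outer part instead of bounding everything by the single weight $\log\frac{s}{\de\e}$, and to redo the outer estimate using the correct control $\e^{-2}e^{w_\mu+\la m^2 w_\mu^2}=O(e^{16\la m^2(\log s)^2})$ with $e^{2\log s+16\la m^2(\log s)^2}=O(s)$ in place of the false bound $e^{\la m^2 w_\mu^2}\le 1$.
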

\noindent To be more precise, we will need to take $\de$ large and $r$ small enough so that $2C(\de,r)+r^2<1$.
\begin{proof}[\dem]
 Since $\rho_{\mu,\e,m}$ is radial, $\psi(y)=\psi(|y|)$. If $\varphi(t)=\psi(e^t)$ then for $t\in[\log(\de\e),\log r]$ we study
 $$
\begin{array}{l}
-\varphi''(t)=\ds e^{2t}\rho_{\mu,\e,m}(e^t):=g_0(t), \qquad\varphi (\log(\de\e)) = 0, \qquad\varphi(\log r)=0.
 \end{array}$$
Direct computations shows that
$$\varphi(t)={t-\log(\de\e)\over \log{r\over \de\e}}\int_{\log(\de\e)}^{\log r}\int_{\log(\de\e)}^sg_0(\tau)\,d\tau\,ds-\int_{\log(\de\e)}^{t}\int_{\log(\de\e)}^sg(\tau)\,d\tau\,ds.$$
Notice that $\varphi$ is concave and its maximum $t_0$ satisfies
$$\varphi'(t_0)={1\over \log{r\over \de\e}}\int_{\log(\de\e)}^{\log r}\int_{\log(\de\e)}^sg_0(\tau)\,d\tau\,ds-\int_{\log(\de\e)}^{t_0}g(\tau)\,d\tau=0$$ and hence,
$$\varphi(t_0)=\int_{\log(\de\e)}^{t_0}g_0(s)(s-\log(\de\e))\,ds=\int_{t_0}^{\log r}g(s)(\log r-s)\,ds.$$
Thus, we deduce that for all $t\in[\log(\de\e),\log r]$
$$0\le\varphi(t)\le\varphi(t_0)\le\max\lf\{\int_{\log(\de\e)}^{\log(\de\sqrt{\e})}g_0(s)(s-\log(\de\e))\,ds,
\int_{\log(\de\sqrt{\e})}^{\log r}g(s)(\log r-s)\,ds\rg\}.$$
We estimate every integral in the following way
\begin{equation*}
\begin{split}
\int_{\log(\de\e)}^{\log(\de\sqrt{\e})}g_0(s)(s-\log(\de\e))\,ds&\le C\bigg[\int_{\log(\de\e)}^{\log(2\de\e|\log\e|^2)}\e^2e^{-2s}(1+|\log\e-s|+|\log\e-s|^2)(s-\log(\de\e))\,ds\\
&\qquad\quad+\int_{\log({\de\over2}\e|\log\e|^2)}^{\log(\de\sqrt{\e})}[\e\{1+s\}e^{-s}+\la^{-1}\e^{2}e^{-2s}](s-\log(\de\e))\,ds\bigg]\\
&\le C\bigg[\int_{\log\de}^{\log(2\de|\log\e|^2)}e^{-2t}(1+|t|+|t|^2)(t-\log \de)\,dt\\
&\quad\qquad+\int_{\log({\de\over2}|\log\e|^2)}^{\log({\de\over\sqrt{\e}})}(\{1+t+\log\e\}e^{-t}+\la^{-1}e^{-2t})(t-\log\de)\,dt\bigg]\\
&\le C_1(\de)[C_2(\e)+1]\quad\qquad(\text{with }t=s-\log\e)
\end{split}
\end{equation*}
with $C_1(\de),C_2(\e)\to0$ as $\de\to+\infty$ and $\e\to0$, in view of \eqref{ro},
$$\Big|w_{\mu}\Big({e^s\over\e}\Big)\Big|=|\log(8\mu^2)-2\log(1+\mu^2\e^2e^{-2s})+4(\log\e-s)|=O(1+|\log\e-s|),$$
for $s\in[\log(\de\e),\log(2\de\e|\log\e|^2)]$, $\e^{-2}e^{w_{\mu}({e^s\over\e})}=O(\e^2e^{-4s})$, for $s\in[\log(\de\e),\log(\de\sqrt{\e})]$ and \linebreak$e^{w_{\mu}({e^s\over\e})+\la m^2w_{\mu}^2({e^s\over\e})}=O(e^{{3\over4}w_{\mu}({e^s\over\e})})=O(\e e^{-3s})$, for $s\in[\log(\de\e|\log\e|^2),\log(2\de\sqrt{\e})]$. And for the second integral
\begin{equation*}
\begin{split}
\int_{\log(\de\sqrt{\e})}^{\log r}g_0(s)(\log r-s)\,ds&\le C\int_{\log(\de\sqrt{\e})}^{\log r}\lf[(1+|s|)e^{2s+16\la m^2s^2}+ \la^{-1}\e^2e^{-2s}\rg][\log r-s]\,ds\\
&\le C\int_{\log(\de\sqrt{\e})}^{\log r}\big[e^{s}(1+|s|+|s|^2)+\la^{-1}\e^{2}e^{-2s}(\log r-s)\big]\,ds\\
&\le C_3(r)[1+C_2(\e)]+C_1(\de)C_2(\e)[1+|\log r|]
\end{split}
\end{equation*}
with $C_3(r)\to0$ as $r\to0$, in view of \eqref{ro}, $\e^{-2}e^{w_{\mu}({e^s\over\e})}=O(\e^2e^{-4s})$, for $s\in[\log(\de\sqrt{\e}),\log r]$
and
$\e^{-2}e^{w_{\mu}({e^s\over\e})+\la m^2w_{\mu}^2({e^s\over\e})}=O(e^{16\la m^2s^2})$, and $e^{2s+16\la m^2s^2}=O(e^s)$, for $s\in[\log(\de\sqrt{\e}),\log r]$.
\end{proof}

\medskip
\noindent We are now ready for

\medskip

\begin{proof}[{\bf Proof (of Proposition \ref{p2}): }] The proof of estimate \eqref{estmfe1} consists of
several steps. Let assume the opposite, namely, the existence of
sequences $\la_n \to 0$, points $\xi^n=(\xi_1^n,\dots,\xi_k^n)\in
\Xi$, numbers $m_j^n$ with $m^n=(m_1^n,\dots,m_k^n) \in\ml{M}$,
$\mu_j^n$, $\e_j^n$ and $c_{ij}^n$, functions $h_n$ with $\|h_n\|_* \to 0 $ as $n\to+\infty$, $\phi_n$ with $\|\phi_n
\|_\infty =1$, and
\begin{equation}\label{ephin}
\begin{cases}
L(\phi_n) \, =\, h_n+\sum_{i=0}^2\sum_{j=1}^k c_{ij}^n\lap_gPZ^n_{ij},&\text{ in $S$},\\
\\[-0.5cm]
\int_{S} \phi_n \Delta_gPZ^n_{ij}\,dv_g = 0, \hbox{ for all}&\
i=0,1,2,j=1,\dots,m, \quad \int_{S}\phi_n\,dv_g=0,
\end{cases}
\end{equation}
Without loss of generality, we assume that $\xi^n_j\to\xi^*_j$, $m_j^n\to m_j^*$, $\mu_j^n\to\mu_j^*$ as $n\to+\infty$ and $\xi^*=(\xi^*_1,\dots,\xi_k^*)\in\Xi$ $m^*=(m_1^*,\dots,m_k^*)\in \ml{M}$. First, we have the following fact.

\begin{claim}\label{estcij}
There exists a constant $C>0$ independent of $n$ such that for all
$i=0,1,2$, $j=1,\dots,k$ it holds $|c_{ij}^n|\le
C\lf[\|h_n\|_*+\|\phi_n\|_\infty\rg]$ as $n\to+\infty$.
\end{claim}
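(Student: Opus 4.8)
The plan is to test the first equation in \eqref{ephin} against the projections $PZ^n_{qp}$, $q=0,1,2$, $p=1,\dots,k$, and to turn the problem into a nearly diagonal linear system for the coefficients $c^n_{ij}$. Concretely, I would multiply $L(\phi_n)$ by $PZ^n_{qp}$, integrate over $S$, and use that $\int_S PZ^n_{qp}\,dv_g=0$ (which kills the nonlocal term $-\frac{1}{|S|}\int_S K\phi_n\,dv_g$) together with the orthogonality $\int_S\phi_n\,\Delta_gPZ^n_{qp}\,dv_g=0$ (which, after integrating by parts on the closed surface, kills the $\Delta_g\phi_n$ contribution). This yields
$$\sum_{i=0}^2\sum_{j=1}^k c^n_{ij}\int_S \Delta_gPZ^n_{ij}\,PZ^n_{qp}\,dv_g\;=\;\int_S K\phi_n\,PZ^n_{qp}\,dv_g\;-\;\int_S h_n\,PZ^n_{qp}\,dv_g .$$

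The next step is to bound the right-hand data. First I would record that the projections $PZ^n_{ij}$ are uniformly bounded in $L^\infty(S)$, which follows from an expansion for $PZ^n_{ij}$ entirely analogous to the one for $PU_{\delta,\xi}$ in Lemma \ref{ewfxi}. Then, using $|h_n|\le\|h_n\|_*\,\rho$ together with $\int_S\rho\,dv_g\le C$ (as used in the proof of Claim \ref{expansionenergydxi}) and $\int_S K\,dv_g=8\pi k+O\big(\sum_j\e_j^2\big)\le C$ (computed in the proof of Lemma \ref{estrr0}), one gets
$$\Big|\int_S h_n\,PZ^n_{qp}\,dv_g\Big|\le C\,\|h_n\|_*,\qquad\Big|\int_S K\phi_n\,PZ^n_{qp}\,dv_g\Big|\le C\,\|\phi_n\|_\infty,$$
so the data of the linear system are $O(\|h_n\|_*+\|\phi_n\|_\infty)$ uniformly in $n$.

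Finally I would show that the coefficient matrix $a^n$ with entries $a^n_{(qp),(ij)}=\int_S \Delta_gPZ^n_{ij}\,PZ^n_{qp}\,dv_g=-\int_S\langle\grad_gPZ^n_{ij},\grad_gPZ^n_{qp}\rangle_g\,dv_g$ is, for $n$ large, invertible with uniformly bounded inverse. Passing to the scaled variable $y=y_{\xi_j}(x)/\e_j$ near $\xi_j$ and using $\nabla\hat\varphi_{\xi_j}(0)=0$, the diagonal entries converge, $a^n_{(ij),(ij)}\to-\int_{\R^2}|\grad Y_i|^2\,dy<0$; the off-diagonal entries are $o(1)$, since for $p\neq j$ the functions $PZ^n_{qp}$ and $PZ^n_{ij}$ concentrate at different points (and each is essentially harmonic with negligible source away from its own center), while for $p=j$, $q\neq i$ the term vanishes to leading order by the parity of $Y_0,Y_1,Y_2$, which are mutually orthogonal in $\dot H^1(\R^2)$. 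Hence $a^n$ converges to a nonsingular diagonal matrix, and solving $a^n c^n=b^n$ gives $|c^n_{ij}|\le C\big(\|h_n\|_*+\|\phi_n\|_\infty\big)$ for all $i,j$ and all large $n$. The delicate point is exactly this last step: the almost-diagonalization of $a^n$ has to be uniform in the parameters $(\xi^n,m^n)$, which requires the precise expansions of the $PZ^n_{ij}$ both to control the cross interactions between distinct bubbles and between distinct modes of the same bubble, and to keep the diagonal entries bounded away from $0$.
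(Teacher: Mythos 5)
Your proposal is correct and follows essentially the same strategy as the paper: test the equation against $PZ^n_{qp}$, bound the data by $C(\|h_n\|_*+\|\phi_n\|_\infty)$ using $\|PZ^n_{qp}\|_\infty\le C$ and $\int_S\rho\,dv_g\le C$, and invert the near-diagonal Gram matrix $\int_S\Delta_gPZ^n_{ij}\,PZ^n_{qp}\,dv_g\to-\frac{32\pi}{3}\delta_{qi}\delta_{pj}$. The only notable difference is that you invoke the orthogonality constraint to drop $\int_S\phi_n\,\Delta_gPZ^n_{qp}\,dv_g$ outright, while the paper estimates the combination $\int_S\phi_n\,\widetilde L(PZ^n_{qp})\,dv_g$ directly and thereby also extracts the sharper bound $O(\e_j\|\phi_n\|_\infty)$ for $i=1,2$, which is not needed for this claim but is used in the following one.
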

\begin{proof}[\dem] For notational purpose we omit the index $n$. To estimate the values of the $c_{ij}$'s, test equation \equ{ephin} against $PZ_{ij}$, $i=0,1,2$ and $j=1,\dots,m$:
$$\int_S \phi (\lap_gPZ_{ij}+KPZ_{ij})\,dv_g =\int_S h PZ_{ij}\,dv_g+\sum_{p=0}^2\sum_{q=1}^k c_{pq} \int_S \lap_g
PZ_{pq} PZ_{ij}\,dv_g.$$
Since for $j=1,\dots,k$ we have the following estimates in $C(S)$
\begin{equation}\label{pzij}
PZ_{ij}=\chi_jZ_{ij}+O(\e_j)\,,\:\:\:i=1,2\,, \qquad  PZ_{0j}=\chi_j(Z_{0j}+2)+O(\e^2_j|\log\e_j|),
\end{equation}
it readily follows that
\begin{equation}\label{ilpzpz}
\int_S \lap_g PZ_{pq} PZ_{ij}dv_g=-{32\pi\over 3}\delta_{p i}\delta_{q j}+O(\e_j),
\end{equation}
 where the $\delta_{ij}$'s are the Kronecker's symbols. Furthermore, we find that
\begin{equation*}
\begin{split}
\int_S \phi \ti L(PZ_{0j})\,dv_g &=\int_S \phi \bigg(\chi_j\lap_gZ_{0j}+\sum_{l=1}^k\chi_le^{-\varphi_l}\e_l^{-2}e^{w_l}\lf[\chi_j(Z_{0j}+2)+O(\e^2_j|\log\e_j|)\rg]\bigg)\,dv_g\\ &=\int_S \phi \chi_je^{-\varphi_j}\e_j^{-2}e^{w_j} \Big(-Z_{0j}+\chi_jZ_{0j}+2\chi_j\Big)\,dv_g+O(\e^2_j|\log\e_j|\|\phi\|_\infty)\\
&=2\int_S \phi \chi_j^2e^{-\varphi_j}\e_j^{-2}e^{w_j} \,dv_g+O(\e^2_j|\log\e_j|\,\|\phi\|_\infty)
\end{split}
\end{equation*}
and similarly for $i=1,2$
\begin{equation*}
\begin{split}
\int_S \phi \ti L(PZ_{ij})\,dv_g &=\int_S \phi \bigg(\chi_j\lap_gZ_{ij}+\sum_{l=1}^k\chi_le^{-\varphi_l}\e_l^{-2}e^{w_l}\lf[\chi_jZ_{ij}+O(\e_j)\rg]\bigg)\,dv_g\\ &=\int_S \phi \chi_je^{-\varphi_j}\e_j^{-2}e^{w_j} \big(-Z_{ij}+\chi_jZ_{ij}\big)\,dv_g+O(\e_j\|\phi\|_\infty)\\
&=O(\e_j\|\phi\|_\infty)
\end{split}
\end{equation*}
Hence, we get the estimates $\ds|c_{0j}|\le C\bigg[\|h\|_*+\|\phi\|_\infty+\e_j\sum_{p=0}^2\sum_{q=1}^k
|c_{pq}|\bigg]$ and
\begin{equation}\label{estcij0}
|c_{ij}|\le
C\bigg[\|h\|_*+\e_j\|\phi\|_\infty+\e_j\sum_{p=0}^2\sum_{q=1}^k
|c_{pq}|\bigg],\qquad\text{for $i=1,2$.}
\end{equation}
Thus, the claim follows.
\end{proof}

Now, we prove the asymptotic behavior of $\phi_n$ on compact subsets of $S\sm\{\xi^*_1,\dots,\xi^*_m\}$.

\begin{claim}\label{ab}
There holds $\phi_n\to0$ as $n\to+\infty$ in $C^{1}$ uniformly
over compact subsets of \linebreak $S\sm\{\xi^*_1,\dots,\xi^*_m\}$. In
particular, given any $2r_0>r>0$ we have
\begin{equation}\label{cpe}
\|\phi_n\|_{L^\infty(S\sm
\cup_{j=1}^mB_{r}(\xi^n_j))}\to0\qquad \text{as $n\to+\infty$}.
\end{equation}
\end{claim}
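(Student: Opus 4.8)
The plan is a standard elliptic compactness argument exploiting the degeneration of the potential $K$ away from the concentration points. Set $c_n:=-\frac{1}{|S|}\int_S K\phi_n\,dv_g$, so that by \eqref{ol} the equation in \eqref{ephin} reads
\[
\Delta_g\phi_n \;=\; h_n + c_n - K\phi_n + \sum_{i=0}^2\sum_{j=1}^k c^n_{ij}\,\Delta_g PZ^n_{ij}\qquad\text{in }S .
\]
By Claim~\ref{estcij} the coefficients $c^n_{ij}$ are bounded, and since $\int_S K\,dv_g=8\pi k+O(\max_j\e_j^2)$ while $\|\phi_n\|_\infty=1$, also $|c_n|\le C$; passing to a subsequence we may assume $c_n\to\bar c$ and $c^n_{ij}\to\bar c_{ij}$, in addition to $\xi^n_j\to\xi^*_j$, $\mu^n_j\to\mu^*_j$, $m^n_j\to m^*_j$ as already assumed.

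First I would record that the right-hand side tends to a constant, uniformly away from the $\xi^*_j$'s. Fix a compact set $\Omega\Subset S\setminus\{\xi^*_1,\dots,\xi^*_k\}$. For $n$ large each $\xi^n_j$ is at distance $\ge c_0>0$ from $\Omega$, hence on $\Omega$
\[
\e_j^{-2}e^{w_j}\;=\;\frac{8\mu_j^2\e_j^2}{(\mu_j^2\e_j^2+|y_{\xi^n_j}(\cdot)|^2)^2}\;=\;O(\e_j^2),
\]
so $\|K\|_{L^\infty(\Omega)}=O(\max_j\e_j^2)\to0$; the same bound together with $-\Delta_g Z_{ij}=e^{-\varphi_j}\e_j^{-2}e^{w_j}Z_{ij}$ on $B_{2r_0}(\xi_j)$ and $|Z_{ij}|\le2$ gives $\|\chi_j\Delta_g Z_{ij}\|_{L^\infty(\Omega)}=O(\e_j^2)\to0$; and $\|h_n\|_{L^\infty(\Omega)}\le C(\Omega)\|h_n\|_*\to0$, because $\rho\equiv1$ outside the concentration balls (see \eqref{roj}). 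Since the numbers $c^n_{ij}$, $\frac1{|S|}\int_S\chi_j\Delta_g Z_{ij}\,dv_g$ and $c_n$ are all bounded, the displayed equation gives $\Delta_g\phi_n=\kappa_n+o(1)$ uniformly on $\Omega$, for a bounded sequence of constants $\kappa_n$ which, along a further subsequence, converges.

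Next I would use interior elliptic estimates on a slightly larger $\Omega'$ with $\Omega\Subset\Omega'\Subset S\setminus\{\xi^*_j\}$: $\|\phi_n\|_{W^{2,p}(\Omega)}\le C(\|\Delta_g\phi_n\|_{L^p(\Omega')}+\|\phi_n\|_{L^p(\Omega')})\le C$ for every $p<\infty$, whence $\phi_n$ is bounded in $C^{1,\gamma}(\Omega)$. By Arzel\`a--Ascoli and a diagonal argument over an exhaustion of $S\setminus\{\xi^*_j\}$, a subsequence converges in $C^1_{loc}(S\setminus\{\xi^*_1,\dots,\xi^*_k\})$ to some $\phi_\infty$ with $\|\phi_\infty\|_\infty\le1$ and $\Delta_g\phi_\infty=\bar\kappa$ (a constant) on $S\setminus\{\xi^*_j\}$. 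Because $\phi_\infty$ is bounded, each $\xi^*_j$ is a removable singularity: in isothermal coordinates $\Delta\wti\phi_\infty=\bar\kappa\,e^{\hat\varphi}$ on a punctured disc with bounded right-hand side, so subtracting a local particular solution reduces to the removability of an isolated singularity of a bounded harmonic function in the plane. Thus $\phi_\infty$ extends to all of $S$ with $\Delta_g\phi_\infty=\bar\kappa$; integrating over $S$ forces $\bar\kappa=0$, so $\phi_\infty$ is a constant. Finally the zero--average condition $\int_S\phi_n\,dv_g=0$ together with the crude bound $\lf|\int_{\cup_j B_r(\xi^n_j)}\phi_n\,dv_g\rg|\le\sum_j|B_r(\xi^n_j)|\le Ckr^2$ yields $\lf|\int_{S\setminus\cup_j B_r(\xi^*_j)}\phi_\infty\,dv_g\rg|\le Ckr^2$ for each fixed $r>0$; letting $r\to0$ gives $\int_S\phi_\infty\,dv_g=0$, hence $\phi_\infty\equiv0$. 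As the limit does not depend on the subsequence, the whole sequence $\phi_n\to0$ in $C^1_{loc}(S\setminus\{\xi^*_j\})$; in particular, for fixed $r>0$ and $n$ large one has $S\setminus\cup_j B_r(\xi^n_j)\subset S\setminus\cup_j B_{r/2}(\xi^*_j)$, a fixed compact set on which $\phi_n\to0$ uniformly, which is \eqref{cpe}.

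The only delicate point is ruling out that $\phi_\infty$ is a nonzero constant; this is exactly where the normalization $\int_S\phi_n\,dv_g=0$ and the uniform $O(r^2)$ control of the $L^1$-mass of $\phi_n$ on the shrinking balls around the $\xi^*_j$ are indispensable, since $C^1_{loc}$ convergence on $S\setminus\{\xi^*_j\}$ by itself discards whatever mass $\phi_n$ may carry near the concentration points. All the remaining ingredients — the $L^\infty$-decay of $K$ and of $\Delta_g Z_{ij}$ on $\Omega$, the interior $W^{2,p}$ bound, and the removable-singularity argument — are routine.
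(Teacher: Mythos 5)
Your argument is correct and follows essentially the same route as the paper's proof of this claim: pass to a subsequence, observe that $K$, $\chi_j\Delta_g Z_{ij}$ and $h_n$ all vanish uniformly away from the limiting points $\xi^*_j$ so that $\Delta_g\phi_n$ tends there to a constant, extract a $C^1_{loc}$ limit $\phi^*$, remove the isolated singularities by boundedness, and combine $\int_S\Delta_g\phi^*\,dv_g=0$ with the inherited zero-average condition to conclude $\phi^*\equiv0$. The only differences are presentational — you spell out the removable-singularity step and the passage to the limit of the average constraint in more detail, where the paper compresses these into a single appeal to dominated convergence.
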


\begin{proof}[\dem] Note that for any $0<r<r_0$ it holds that up to a subsequence
as $n\to+\infty$
$$
c(\phi_n)=-\frac{1}{|S|}\int_{S}K(x)\phi_n(x)\,dv_g
=-\frac{1}{|S|}\sum_{j=1}^k\int_{B_{r}(\xi_j)}e^{-\varphi_j}(\e_j^{n})^{-2}
e^{w_j}\phi_n\,dv_g+O([\e_j^n]^2)=c_0+ o(1).$$ From Claim
\ref{estcij}, it readily follows that $\sum_{i=0}^2\sum_{j=1}^k
c_{ij}^n\lap_gPZ^n_{ij}=o(1)$ as $n\to+\infty$ in $S\sm
\cup_{j=1}^m B_{r}(\xi^n_j)$ for a given $r>0$, in
view of $\lap_gPZ^n_{ij}=O([\e_j^n]^2)-\frac{1}{|S|}\int_S
\chi_j\Delta_g Z^n_{ij} dv_g=O([\e_j^n]^2)$ in $S\sm \cup_{j=1}^m
B_{r}(\xi^n_j)$. Thus, we get
$$\Delta_g\phi_n(x)=\sum_{j=1}^kO((\e^n_j)^2)+c_0+o(1)\qquad\text{uniformly for}\quad x\in S\sm \cup_{j=1}^m B_{r}(\xi^n_j).$$
Therefore, passing to a subsequence $\phi_n\to\phi^*$ as
$n\to+\infty$ in $C^{1}$ sense over compact subsets of
$S\sm\{\xi^*_1,\dots,\xi^*_m\}$. Since $|\phi^*(x)|\le1$ for all
$x\in S\sm\{\xi^*_1,\dots,\xi^*_m\}$, it follows that $\phi^*$ can
be extended continuously to $S$ so that $\phi^*$ satisfies
$\Delta_g \phi^*=c_0$, in $S$ and $\int_S\phi^*\,dv_g=0$ using
dominated convergence. By, $\int_S \Delta_g\phi^*=0$ we get that
$c_0=0$. Therefore, $\phi^*\equiv0$, and the claim follows.
\end{proof}

\begin{claim}
For all $i=0,1,2$, $j=1,\dots,k$ it holds that $c_{ij}^n\to 0$ as $n\to+\infty$.
\end{claim}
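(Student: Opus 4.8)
The plan is to carry out a blow-up analysis around each concentration point $\xi_j^*$ and then read off the vanishing of the $c_{ij}^n$ from the limiting equation, using self-adjointness of $\hat L_j$. Fix $j\in\{1,\dots,k\}$. By Claim~\ref{estcij}, together with $\|h_n\|_*\to0$ and $\|\phi_n\|_\infty=1$, the numbers $c_{ij}^n$ are bounded, so after passing to a subsequence $c_{ij}^n\to c_{ij}^\infty$ for $i=0,1,2$. Introduce the rescaled function $\hat\phi_n(z):=\phi_n\big(y_{\xi_j^n}^{-1}(\e_j^n z)\big)$, defined on the ball $|z|<2r_0/\e_j^n$ (which exhausts $\R^2$) and bounded by $1$. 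Writing \eqref{ephin} in the isothermal chart centered at $\xi_j^n$, multiplying by $e^{\hat\varphi_{\xi_j^n}}$ to turn $\Delta_g$ into the flat Laplacian, and rescaling by $(\e_j^n)^2$, one checks term by term that the equation for $\hat\phi_n$ passes to the limit: the term coming from $K$ converges locally uniformly to $\tfrac{8\mu_j^2}{(\mu_j^2+|z|^2)^2}$ (only the index $j$ survives, as the points are uniformly separated); the $c(\phi_n)$ term scales to $O((\e_j^n)^2)\to0$ since $|c(\phi_n)|\le C$; the rescaled $h_n$-term is dominated by $C\|h_n\|_*\rho_{\mu_j,\e_j^n,m_j}\to0$ locally uniformly; and the rescaled $\chi_j\Delta_gZ_{ij}^n$ converges to $-\tfrac{8\mu_j^2}{(\mu_j^2+|z|^2)^2}Y_{ij}$, while the constant parts of all the $\Delta_gPZ_{pq}^n$ and the off-diagonal ($q\neq j$) pieces rescale to $0$. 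By standard elliptic estimates, after a further subsequence $\hat\phi_n\to\hat\phi$ in $C^1_{\mathrm{loc}}(\R^2)$, with $\hat\phi$ bounded and
$$\hat L_j(\hat\phi)=\Delta\hat\phi+\frac{8\mu_j^2}{(\mu_j^2+|z|^2)^2}\hat\phi=-\sum_{i=0}^{2}c_{ij}^\infty\,\frac{8\mu_j^2}{(\mu_j^2+|z|^2)^2}\,Y_{ij}(z)\qquad\text{in }\R^2.$$

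Next I would record that the orthogonality conditions pass to the limit. Since $\int_S\phi_n\,dv_g=0$, the constant part of $\Delta_gPZ_{ij}^n$ drops out, so $\int_S\phi_n\Delta_gPZ_{ij}^n\,dv_g=0$ is exactly $\int_{B_{2r_0}(\xi_j^n)}\phi_n\,\chi_j e^{-\varphi_j}(\e_j^n)^{-2}e^{w_j}Z_{ij}^n\,dv_g=0$; rescaling and using dominated convergence (the integrand is dominated by the fixed function $\tfrac{8\mu_j^2}{(\mu_j^2+|z|^2)^2}|Y_{ij}|\in L^1(\R^2)$, since each $Y_{ij}$ is bounded) yields $\int_{\R^2}\tfrac{8\mu_j^2}{(\mu_j^2+|z|^2)^2}\,\hat\phi\,Y_{ij}\,dz=0$ for $i=0,1,2$. (These identities are not needed for the present claim but will be used in the sequel.)

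Finally, I would test the limiting equation against $Y_{lj}$. Because $\hat\phi$ is bounded and $\Delta\hat\phi=O(|z|^{-4})$ (both the right-hand side above and $\tfrac{8\mu_j^2}{(\mu_j^2+|z|^2)^2}\hat\phi$ decay like $|z|^{-4}$), the Newtonian-potential representation together with a Liouville-type argument (an entire harmonic function on $\R^2$ of at most logarithmic growth is constant) forces $\int_{\R^2}\Delta\hat\phi\,dz=0$ and $|\nabla\hat\phi(z)|=O(|z|^{-2})$. Multiplying the equation for $\hat\phi$ by $Y_{lj}$, integrating over $B_R$, applying Green's identity with $\Delta Y_{lj}=-\tfrac{8\mu_j^2}{(\mu_j^2+|z|^2)^2}Y_{lj}$ and $\hat L_j(Y_{lj})=0$, and letting $R\to\infty$ — the boundary integrals over $\partial B_R$ vanish thanks to these decay rates together with $Y_{0j}=-2+O(|z|^{-2})$ and $Y_{ij}=O(|z|^{-1})$ for $i=1,2$ — gives $\int_{\R^2}Y_{lj}\,\hat L_j(\hat\phi)\,dz=0$. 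On the other hand, by the equation for $\hat\phi$ and the mutual orthogonality of $Y_{0j},Y_{1j},Y_{2j}$ with respect to the weight $\tfrac{8\mu_j^2}{(\mu_j^2+|z|^2)^2}$ (one even and two odd in the coordinate directions), this quantity equals $-c_{lj}^\infty\int_{\R^2}\tfrac{8\mu_j^2}{(\mu_j^2+|z|^2)^2}Y_{lj}^2\,dz$, a strictly positive multiple of $c_{lj}^\infty$. Hence $c_{lj}^\infty=0$ for $l=0,1,2$, and since every subsequence of $(c_{ij}^n)_n$ has a further subsequence converging to $0$, we conclude $c_{ij}^n\to0$ for all $i=0,1,2$ and $j=1,\dots,k$.

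The main obstacle I anticipate is the bookkeeping in the rescaled equation — verifying that each error contribution (the $c(\phi_n)$ term, the constant parts of the $\Delta_gPZ$'s, the off-diagonal pieces, and the $\|h_n\|_*\rho$ bound on $h_n$) genuinely disappears in the limit — together with the justification that the boundary terms in Green's identity vanish, which relies on the a priori decay of $\hat\phi$ and $\nabla\hat\phi$ at infinity extracted from boundedness of $\hat\phi$.
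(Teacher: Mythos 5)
Your proof is correct, and it follows a genuinely different route from the paper's. The paper separates the indices: for $i=1,2$ the sharpened bound established in the proof of the preceding claim, $|c_{ij}^n|\le C\big[\|h_n\|_*+\e_j^n\|\phi_n\|_\infty+\e_j^n\sum_{p,q}|c_{pq}^n|\big]$, already yields $c_{ij}^n\to0$ from $\|h_n\|_*\to0$ and $\e_j^n\to0$; the bound available for $i=0$ lacks the factor $\e_j^n$ in front of $\|\phi_n\|_\infty$, so the paper instead integrates the equation for $\phi_n$ over a fixed small ball $B_r(\xi_j^n)$, applies the divergence theorem to $\Delta_g\phi_n$, and invokes the $C^1$-convergence $\phi_n\to0$ on compacts of $S\setminus\{\xi_1^*,\dots,\xi_k^*\}$ to make the boundary flux and the mass $\int_{B_r(\xi_j)}e^{-\varphi_j}\e_j^{-2}e^{w_j}\phi_n$ vanish. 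You instead blow up at each $\xi_j^*$ immediately, carrying the (bounded) coefficients $c_{ij}^n\to c_{ij}^\infty$ along, and arrive at the inhomogeneous limit equation $\hat L_j\hat\phi=-\sum_{i}c_{ij}^\infty\frac{8\mu_j^2}{(\mu_j^2+|z|^2)^2}Y_{ij}$ on $\R^2$; testing against $Y_{lj}$ via Green's identity and using $\hat L_jY_{lj}=0$, the $w$-orthogonality of the $Y_{ij}$'s, and the decay of $\hat\phi$ and $\nabla\hat\phi$ deduced from boundedness through the Newtonian-potential/Liouville argument, you kill $c_{0j}^\infty,c_{1j}^\infty,c_{2j}^\infty$ in one stroke. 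In effect you run here the very blow-up step that the paper postpones to the end of the proof of the invertibility proposition, where the paper, having already disposed of the $c_{ij}^n$, obtains the homogeneous limit $\hat L_j\psi^*_j=0$ and derives a contradiction from the orthogonality constraints. Your version is uniform in $i$ and does not rely on the outer $C^1$-convergence of $\phi_n$, at the price of the extra potential-theoretic work needed to justify $\int_{\R^2}\Delta\hat\phi=0$ and $|\nabla\hat\phi(z)|=O(|z|^{-2})$ so that the $\partial B_R$ integrals vanish, which is genuinely needed for $l=0$ since $Y_{0j}\to-2$ at infinity.
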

\begin{proof}[\dem]
The claim readily follows for $i=1,2$  and $j=1,\dots,k$ from
Claim \ref{estcij} and the estimate in \eqref{estcij0} in view of $\|h\|_*=o(1)$. Let us
refine the estimate for $c_{0j}$, $j=1,\dots,k$. It is clear that in $B_{r_0}(\xi_j)$
$$\lap_g\phi+e^{-\varphi_j}\e_j^{-2}e^{w_j}\phi+c(\phi) =h+\sum_{p=0}^2c_{p j}\lap_g
Z_{p j}-\sum_{p=0}^2\sum_{q=1}^k
{c_{pq}\over|S|}\int_S\chi_q\lap_g
Z_{pq}\,dv_g.$$ Hence,
integrating on $B_{r}(\xi_j)$ with $0<r<r_0$ we find that as
$n\to+\infty$
\begin{equation*}
\begin{split}
\int_{B_r(\xi_j)}e^{-\varphi_j}\e_j^{-2}e^{w_j}\phi\,dv_g=&\,-\int_{B_r(\xi_j)}\lap_g\phi\,dv_g-c(\phi)|B_r(\xi_j)|
+\int_{B_r(\xi_j)}h\,dv_g\\
&\,+\sum_{p=0}^2c_{p j}\int_{B_r(\xi_j)}\lap_g Z_{p
j}\,dv_g-\sum_{p=0}^2\sum_{q=1}^k
{c_{pq}\over|S|}|B_r(\xi_j)|\int_S\chi_p\lap_g
Z_{pq}\,dv_g\\
=&\,-\int_{\fr B_r(0)}\fr_\nu(\phi\circ
y_{\xi_j}^{-1})\,d\sigma+O\bigg(|c(\phi)|+\|h\|_*+\sum_{q=1}^k
\e_q^2\bigg)=o(1),
\end{split}
\end{equation*}
in view of Claims \ref{estcij} and \ref{ab} (the convergence $\phi\to0$ in $C^1$ sense). Thus, we obtain that
$$c_{0j}=O\bigg(\bigg|\int_{B_r(\xi_j)}e^{-\varphi_j}\e_j^{-2}e^{w_j}\phi\,dv_g\bigg|+\e_j^2|\log\e_j|\,
\|\phi\|_\infty+\|h\|_*+\e_j\sum_{p=0}^2\sum_{q=1}^k
|c_{pq}|\bigg)=o(1)$$ and the claim follows.
\end{proof}

\noindent We follow ideas shows in \cite{DeKM} to prove an estimate for
$\phi_n$. For $y\in B_{r_0}(0)$, define
$\hat\phi_{n,j}(y)=\phi_n(y_{\xi_j}^{-1}(y))$ and
$$\hat
h_n(y)=e^{\hat\varphi_j(y)}\Big[-c(\phi_n)+h_n(y_{\xi_j^n}^{-1}(y))+\sum_{p=0}^2\sum_{q=1}^kc_{pq}\lap_gPZ_{pq}^n(y_{\xi_j}^{-1}(y))\Big],$$
so that,
$$\hat L_{j}(\hat\phi_{n,j}):=\Delta \hat\phi_{n,j} + {8\mu_j^2\e_j^2 \over (\mu_j^2\e_j^2 + |y|^2 )^2}
\hat\phi_{n,j}=\hat h_n.$$
Let us fix such a number $\de>0$ which we may take
larger whenever it is needed and a small $0<r<r_0$. Now, let us
consider the ``annulus norm'' and ``boundary annulus norm''
$$\|\phi\|_a =  \|\phi\|_{L^\infty(\Om_{\de,r})}\qquad\text{and}\qquad \|\phi\|_b =  \|\phi\|_{L^\infty(\fr\Om_{\de,r})},$$
where $\Omega_{\de,r} :=  B_{r}(0) \sm \bar B_{\de\e_j}(0)$. Note that $\fr\Om_{\de,r}=\cup_{j=1}^m [\fr B_{\de\e_j}(0)\cup
\fr B_{r}(0)]$. By now it is rather standard to show that for functions on $\Om_{\de,r}$ the operator $\hat L_j$ satisfies the maximum principle in $\Omega_{\de,r}$ for $\de$ large and $r>0$ small enough, see for example \cite{DeKM}. In fact, the function $g(y):=Y_{0j}(a\e_j^{-1}y)=2{a^2|y|^2-\mu_j^2\e_j^2\over \mu_j^2\e_j^2+a^2|y|^2}$ with $0<4a<1$ and $\sqrt{5\over3}a<\de$ satisfies $\hat L_j(g)<0$ and $g>{1\over 2}>0$ in $\Om_{\de,r}$. As a consequence, we get that

\begin{claim}
There is a constant $C>0$ such that if $ \hat L_j(\phi) = h$ in
$\Omega_{\de,r}$ then
\begin{equation} \label{ee1}
\|\phi\|_a \le C[ \|\phi\|_b + \| h\|_{**} ],
\end{equation}
where
$$\|h\|_{**}=\sup_{y\in B_r(0)}{|h(y)|\over \rho_{\mu,\e,m}(y)+1}.$$
\end{claim}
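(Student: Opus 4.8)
The plan is a barrier/maximum-principle argument for the operator $\hat L_j=\Delta+V_j$ on the annulus $\Om_{\de,r}=B_r(0)\sm\bar B_{\de\e_j}(0)$, where $V_j(y)=\frac{8\mu_j^2\e_j^2}{(\mu_j^2\e_j^2+|y|^2)^2}$. First I would record the maximum principle for $\hat L_j$ on $\Om_{\de,r}$ (valid for $\de$ large and $r$ small): since the function $g$ exhibited just above is positive and bounded on $\Om_{\de,r}$, with $\tfrac12<g<2$, and satisfies $\hat L_j(g)<0$ there, any $w$ with $\hat L_j(w)\le 0$ in $\Om_{\de,r}$ and $w\ge 0$ on $\fr\Om_{\de,r}$ is nonnegative: if $v:=w/g$ had a negative interior minimum at $y_0$, then from $\hat L_j(w)=v\,\hat L_j(g)+g\,\Delta v+2\nabla g\cdot\nabla v$ and $\nabla v(y_0)=0$, $\Delta v(y_0)\ge 0$, $v(y_0)<0$, $\hat L_j(g)(y_0)<0$, $g(y_0)>0$ one would get $\hat L_j(w)(y_0)>0$, a contradiction.

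Next I would build a fixed supersolution that absorbs the right-hand side. By the previous Lemma there is $\psi_1\ge 0$ solving $-\Delta\psi_1=\rho_{\mu_j,\e_j,m_j}$ in $\Om_{\de,r}$ with $\psi_1=0$ on $\fr\Om_{\de,r}$ and $\|\psi_1\|_\infty\le C(\de,r)$; setting $\psi_2(y)=\tfrac14(r^2-|y|^2)$, so that $-\Delta\psi_2=1$ and $0\le\psi_2\le r^2/4$, and $\Psi:=\psi_1+\psi_2$, one has $\Psi\ge 0$, $-\Delta\Psi=\rho_{\mu_j,\e_j,m_j}+1$, and $\|\Psi\|_\infty\le C(\de,r)+r^2/4<\tfrac12$ once $\de$ is large and $r$ small (this is exactly the smallness $2C(\de,r)+r^2<1$ noted after the Lemma). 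The key point is the pointwise inequality $V_j\le\rho_{\mu_j,\e_j,m_j}$ on $\Om_{\de,r}$, read off from \eqref{ro}: on $\{|y|\le\de\e_j|\log\e_j|^2\}$ the first cut-off equals $1$ and $\e_j^{-2}e^{w_j}=V_j$, so $\rho_{\mu_j,\e_j,m_j}\ge(1+|w_j|+w_j^2)V_j\ge V_j$; on $\{|y|\ge\de\e_j|\log\e_j|^2\}$ the second cut-off equals $1$, so $\rho_{\mu_j,\e_j,m_j}\ge\la^{-1}V_j\ge V_j$. Consequently
\[
\hat L_j(\Psi)=-(\rho_{\mu_j,\e_j,m_j}+1)+V_j\Psi\le-(\rho_{\mu_j,\e_j,m_j}+1)+\tfrac12\,\rho_{\mu_j,\e_j,m_j}\le-\tfrac12\,(\rho_{\mu_j,\e_j,m_j}+1).
\]

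Finally I would run the comparison. Given $\phi$ with $\hat L_j(\phi)=h$ in $\Om_{\de,r}$, put $\bar\phi:=2\|\phi\|_b\,g+2\|h\|_{**}\,\Psi$. On $\fr\Om_{\de,r}$, using $\Psi\ge 0$ and $g>\tfrac12$, we get $\bar\phi>\|\phi\|_b\ge|\phi|$; in $\Om_{\de,r}$, using $\hat L_j(g)<0$, the displayed bound, and $|h(y)|\le\|h\|_{**}(\rho_{\mu_j,\e_j,m_j}(y)+1)$ (the definition of $\|\cdot\|_{**}$),
\[
\hat L_j(\bar\phi)\le 2\|h\|_{**}\,\hat L_j(\Psi)\le-\|h\|_{**}\,(\rho_{\mu_j,\e_j,m_j}+1)\le-|h|.
\]
Hence $w_\pm:=\bar\phi\pm\phi$ satisfy $\hat L_j(w_\pm)\le 0$ in $\Om_{\de,r}$ and $w_\pm\ge 0$ on $\fr\Om_{\de,r}$, so the maximum principle above gives $|\phi|\le\bar\phi$ on $\Om_{\de,r}$, whence $\|\phi\|_a\le\|\bar\phi\|_\infty\le 2\|g\|_\infty\|\phi\|_b+2\|\Psi\|_\infty\|h\|_{**}\le 4(\|\phi\|_b+\|h\|_{**})$, which is \eqref{ee1}. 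The only genuinely delicate step is the bookkeeping: verifying $V_j\le\rho_{\mu_j,\e_j,m_j}$ from the piecewise definition \eqref{ro} and securing $\|\Psi\|_\infty<\tfrac12$, which is precisely what forces $\de$ large and $r$ small; the barrier $g$ itself serves only to control the boundary data, for which its sign $\hat L_j(g)<0$, already established, is all that is needed.
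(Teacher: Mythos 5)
Your proof is correct and follows essentially the same barrier/maximum-principle strategy as the paper, differing only in inessential bookkeeping: you use the paraboloid $\tfrac14(r^2-|y|^2)$ instead of the explicit logarithmic annular solution $\psi_{j,1}$, you group the right-hand side as $\rho+1$ with a factor $2$ out front rather than splitting it as $1+2\rho$, and you spell out the $w/g$ substitution behind the maximum principle which the paper takes as standard.
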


\begin{proof}[\dem]
We shall omit the subscript $n$ in the quantities involved.
We will establish this inequality with the use of suitable
barriers. Consider now the solution of the problem
$$-\Delta \psi = 1,\qquad\text{ for $\de\e_j<|y| < r,$}$$
 $$\psi(y) = 0\quad\text{ for $|y| = \de\e_j,$ $|y| =r$.}$$ Direct computation shows that
$$
\psi_{j,1}(y)=\frac{\de^2\e_j^2-r^2}{4}+\frac{\de^2\e_j^2-r^2}{4}\frac{\log\frac{|y|}{\de\e_j}}{\log\frac{\de\e_j}{r}},
$$
Note that $\ds0\le
\psi_{j,1}\le \frac{r^2-\de^2\e_j^2}{4}\le {r^2\over 4}$ hence these functions
$\psi_{j,1}$ have a uniform bound independent of $\e_j$.

On the other hand, let us consider the function $g$ defined above, and let us
set
$$
\psi (y) = 2\|\phi\|_b\, g(y) + \|h\|_{**}[\psi_{j,1}(y)+\psi_{j,2}(y)],
$$
where $\psi_{j,2}$ is the solution to
$$
\begin{array}{l}
-\Delta \psi =\ds 2\rho_{\mu_j,\e_j,m_j}, \quad
\de\e_j<|y| < r, \cr\cr \psi (y) = 0 \hbox{ for }|y| = \de\e,\quad |y| =r.
 \end{array}$$
Then, it is easily checked that, choosing $\de$ larger if necessary,
$\hat L_j(\psi) \le h$ and $\psi \ge |\phi|$ on $\partial
\Omega_{\de,r}$. Hence $|\phi| \le \psi$ in $\Omega_{\de,r}$.
In fact, we have that for all $y\in\fr\Om_{\de,r}$
$$\psi(y)\ge 2\|\phi\|_b\,g(y)\ge \|\phi\|_b\ge |\phi(y)|.$$
Also, we have that choosing $2C(\de,r)+r^2<1$ (for $\de$
large enough and $r$ small enough)
\begin{equation*}
\begin{split}
\hat L_j(\psi)
&<\|h\|_{**}\lf(\lap [\psi_{j,1}+\psi_{j,2}]+{8\mu_j^2\e_j^2 \over (\mu_j^2\e_j^2 + |y|^2 )^2}[\psi_{j,1}+\psi_{j,2}]\rg) \\&
\le\|h\|_{**}\lf(-1-2\rho_{\mu_j,\e_j,m_j}(y)+{8\mu_j^2\e_j^2\over (\mu_j^2\e_j^2+|y|^2)^2}[2C(\de,r)+r^2]\rg)\\
&\le-\|h\|_{**}[\rho_{\mu_j,\e_j,m_j}+1]
\le h,
\end{split}
\end{equation*}
in view of $\rho_{\mu_j,\e_j,m_j}(y)\ge{8\mu_j^2\e_j^2\over (\mu_j^2\e_j^2+|y|^2)^2}$
Hence, we conclude that $|\phi(y)|\le \psi(y)$ for all
$\de\e_j<|y|<r$, for every $j=1,\dots,k$ and the claim
follows.
\end{proof}

\medskip

The following intermediate result provides another estimate.
Again, for notational simplicity \emph{we omit} the subscript $n$
in the quantities involved.

\begin{claim}\label{LpLe3a}
There exist constants $C>0$ such that for large $n$
\begin{equation}\label{ei1}
\|\phi\|_{L^\infty(\cup_{j=1}^mB_{r_0}(\xi_j))}\leq
C\, \big\{\|\phi\|_{L^\infty(\cup_{j=1}^mB_{\de\e_j}(\xi_j))} + o(1)
\big\}.
\end{equation}

\end{claim}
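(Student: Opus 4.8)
The plan is to continue the contradiction scheme of the preceding steps and to localize the estimate ball by ball, splitting each $B_{r_0}(\xi^n_j)$ as $B_{\de\e^n_j}(\xi^n_j)\cup A_{r,\de\e^n_j}(\xi^n_j)\cup A_{r_0,r}(\xi^n_j)$ for the fixed large $\de$ (the one already appearing in $\rho$ and in the barrier estimate) and a fixed small $0<r<r_0$. On the outermost annulus the bound is essentially free: for $r>0$ fixed and $n$ large, $\overline{A_{r_0,r}(\xi^n_j)}$ lies inside a fixed compact subset of $S\sm\{\xi^*_1,\dots,\xi^*_k\}$, so Claim \ref{ab} yields $\|\phi_n\|_{L^\infty(A_{r_0,r}(\xi^n_j))}=o(1)$ and, in particular, $\|\phi_n\|_{L^\infty(\fr B_r(\xi^n_j))}=o(1)$.

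For the middle annulus I would pull back through $y=y_{\xi^n_j}(x)$ and write $\hat\phi_{n,j}=\phi_n\circ y_{\xi^n_j}^{-1}$, which solves $\hat L_j(\hat\phi_{n,j})=\hat h_n$ on $\Omega_{\de,r}$; the estimate \eqref{ee1} then gives $\|\phi_n\|_{L^\infty(A_{r,\de\e^n_j}(\xi^n_j))}=\|\hat\phi_{n,j}\|_a\le C[\|\hat\phi_{n,j}\|_b+\|\hat h_n\|_{**}]$. Since $\|\hat\phi_{n,j}\|_b\le\|\phi_n\|_{L^\infty(\fr B_{\de\e^n_j}(\xi^n_j))}+\|\phi_n\|_{L^\infty(\fr B_r(\xi^n_j))}\le\|\phi_n\|_{L^\infty(B_{\de\e^n_j}(\xi^n_j))}+o(1)$ by the previous paragraph, assembling the three pieces of $B_{r_0}(\xi^n_j)$ and taking the maximum over $j=1,\dots,k$ will produce \eqref{ei1}, provided we know $\|\hat h_n\|_{**}=o(1)$.

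The heart of the matter, and the step I expect to need the most care, is exactly $\|\hat h_n\|_{**}=o(1)$. Recall $\hat h_n=e^{\hat\varphi_j}[-c(\phi_n)+h_n\circ y_{\xi^n_j}^{-1}+\sum_{p,q}c^n_{pq}\,\lap_gPZ^n_{pq}\circ y_{\xi_j}^{-1}]$ with $e^{\hat\varphi_j}=O(1)$, and I would bound each summand against the weight $\rho_{\mu_j,\e_j,m_j}+1\ge1$. The constant term $c(\phi_n)$ is $o(1)$ because $c(\phi_n)=c_0+o(1)$ with $c_0=0$, as shown in the proof of Claim \ref{ab}. The term $h_n\circ y_{\xi^n_j}^{-1}$ contributes $O(\|h_n\|_*)=o(1)$, since on $B_{r_0}(\xi^n_j)$ one has $\rho=\rho_j+1$ with $\rho_j(x)=\rho_{\mu_j,\e_j,m_j}(y_{\xi^n_j}(x))$, so the $**$-weight matches the $*$-weight exactly. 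Finally, for the curvature terms I would use that in $B_{r_0}(\xi^n_j)$, $-\lap_gPZ^n_{ij}=e^{-\varphi_j}(\e^n_j)^{-2}e^{w_j}Z^n_{ij}+O((\e^n_j)^2)$ with $|Z^n_{ij}|\le C$ and $(\e^n_j)^{-2}e^{w_j}\le\rho_{\mu_j,\e_j,m_j}$, hence $|\lap_gPZ^n_{ij}|\le C(\rho_{\mu_j,\e_j,m_j}+1)$ there, so this part is $O(\sum_{p,q}|c^n_{pq}|)=o(1)$ because $c^n_{ij}\to0$ has already been established. Once this is in hand the argument closes; the only real obstacle is keeping the weighted bookkeeping consistent and uniform in $j$, especially the identification of $\rho_j$ on $B_{r_0}(\xi_j)$ with the model weight $\rho_{\mu_j,\e_j,m_j}$ and the pointwise domination of $\lap_gPZ^n_{ij}$ by it.
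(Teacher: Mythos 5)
Your argument follows the paper's proof essentially step for step: decompose each ball into the inner ball $B_{\de\e_j}$, the middle annulus where the barrier estimate \eqref{ee1} applies, and the outer annulus where Claim \ref{ab} gives $o(1)$; then reduce matters to showing $\|\hat h_n\|_{**}=o(1)$, which follows from $c(\phi_n)=o(1)$, $\|h_n\|_*\to 0$, and $c^n_{ij}\to 0$ together with the observation that the $**$-weight on $B_r(0)$ pulls back precisely to the $*$-weight $\rho_j+1$ on $B_{r_0}(\xi_j)$ and that $|\lap_gPZ^n_{ij}|$ is pointwise dominated by it. The only difference is cosmetic: the paper compresses the $\|\hat h_n\|_{**}$ estimate into a single line $\|\hat h\|_{**}\le C\big[|c(\phi)|+\|h\|_*+\sum|c_{pq}|\big]$ by invoking $\|\lap_gPZ_{pq}\|_*\le C$, whereas you unpack exactly why each weighted bound holds, which is the correct justification for that line.
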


\begin{proof}[\dem]First, note that
$$
\|\hat h\|_{**}\le C\bigg[|c(\phi)|+\|h\|_*+\sum_{p=0}^2\sum_{q=1}^k|c_{pq}|\bigg]$$
in view of the definition of $\|\cdot\|_*$ and $\|\cdot\|_{**}$ and $\|\lap_gPZ_{pq}\|_*\le C$. From estimate \eqref{ee1} we
deduce that there is a constant $C>0$ \st
\begin{equation}\label{LpLe3aa}
\begin{split}
\|\phi\|_{L^{\infty}(B_{r}(\xi_j) \sm \bar B_{\de\e_j}(\xi_j))}&=\|\hat\phi\|_a \le C\lf[ \|\hat\phi\|_b + \|\hat h\|_{**}\rg]\\
&\le C\bigg[\|\phi\|_{L^\infty(\fr[B_{r}(\xi_j) \sm \bar B_{\de\e_j}(\xi_j)])}+|c(\phi)|+\|h\|_*+\sum_{p=0}^2\sum_{q=1}^k|c_{pq}|\bigg]
\end{split}
\end{equation}
From \eqref{cpe}
we find that for large $n$
\begin{equation}\label{cpe1}
\|\phi\|_{L^\infty(S\sm\cup_{l=1}^m B_r(\xi_j))}=o(1).
\end{equation}
Furthermore, we have that $c(\phi)=o(1)$, since $c_0=0$. By the
assumption, we know that $ \|h\|_*=o(1)$. Now, from
\eqref{LpLe3aa} it is clear that
\begin{equation*}
\begin{split}
\|\phi\|_{L^\infty(\cup_{j=1}^mB_r(\xi_j))}
& \le \|\phi\|_{L^\infty(\cup_{j=1}^mB_{\de\e_j}(\xi_j))}+C\lf[\|\hat\phi\|_b+\|\hat h\|_*\rg]\\
\end{split}
\end{equation*}
and the conclusion follows by \eqref{cpe1}.
\end{proof}

\noindent We continue with the proof of Proposition \ref{p2} and
we get the following fact.

\begin{claim}
There exists an index $j\in\{1,\dots,k\}$ \st passing to a
subsequence if necessary,
\begin{equation}\label{inf1}
\liminf_{n\to\infty} \|\phi_n\|_{L^\infty(B_{\de\e_j}(\xi^n_j))}
\geq\kappa>0.
\end{equation}
\end{claim}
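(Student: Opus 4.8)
The plan is to argue directly from the normalization $\|\phi_n\|_\infty=1$, exploiting the three facts already in place: the decay of $\phi_n$ away from the concentration points (Claim \ref{ab}, in the form \eqref{cpe}), the vanishing of the coefficients $c_{ij}^n$ and of $c(\phi_n)$, and the interior estimate \eqref{ei1} of Claim \ref{LpLe3a}. The point is that these facts together force the whole $L^\infty$-mass of $\phi_n$ to sit, asymptotically, inside the tiny inner balls $B_{\de\e_j}(\xi_j^n)$, and since there are only finitely many indices a pigeonhole argument isolates one of them.

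First I would observe that $\|\phi_n\|_\infty=1$ for every $n$, while \eqref{cpe} gives $\|\phi_n\|_{L^\infty(S\sm\cup_{j}B_{r_0}(\xi_j^n))}=o(1)$; hence for all $n$ large enough the supremum of $|\phi_n|$ over $S$ is attained inside the union $\cup_{j}B_{r_0}(\xi_j^n)$, so that $\|\phi_n\|_{L^\infty(\cup_{j}B_{r_0}(\xi_j^n))}=1$. Feeding this into \eqref{ei1}, and using that the $o(1)$ term there is genuine — because $\|h_n\|_*=o(1)$ by assumption and $c(\phi_n)=o(1)$, $c_{pq}^n=o(1)$ have already been established — we obtain
$$1=\|\phi_n\|_{L^\infty(\cup_{j}B_{r_0}(\xi_j^n))}\le C\Big\{\|\phi_n\|_{L^\infty(\cup_{j}B_{\de\e_j}(\xi_j^n))}+o(1)\Big\},$$
and therefore $\|\phi_n\|_{L^\infty(\cup_{j}B_{\de\e_j}(\xi_j^n))}\ge \tfrac{1}{2C}$ for all $n$ large. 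Since this union consists of exactly $k$ balls, there is an index $j\in\{1,\dots,k\}$ for which $\|\phi_n\|_{L^\infty(B_{\de\e_j}(\xi_j^n))}\ge \tfrac{1}{2Ck}$ holds for infinitely many $n$; passing to that subsequence and setting $\kappa:=\tfrac{1}{2Ck}$ proves \eqref{inf1}.

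I do not expect a genuine obstacle in this step: all the ingredients are already available from the preceding claims, and the only mild care required is to verify that the right-hand side $o(1)$ in \eqref{ei1} really tends to $0$ (which rests on $c(\phi_n)=o(1)$, valid since $c_0=0$, on $c_{pq}^n=o(1)$, and on $\|h_n\|_*=o(1)$). The substantial part of the argument for Proposition \ref{p2} is the subsequent one, where one rescales $\phi_n$ around the selected point $\xi_j^*$ by $y=y_{\xi_j^n}(x)/\e_j^n$, passes to the limit to obtain a bounded solution of $\hat L_j\phi_\infty=0$ on $\R^2$, and then uses the classification of $\ker\hat L_j=\operatorname{span}\{Y_{0j},Y_{1j},Y_{2j}\}$ together with the orthogonality conditions in \eqref{ephin} to conclude $\phi_\infty\equiv 0$, contradicting \eqref{inf1}.
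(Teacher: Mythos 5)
Your proof is correct and follows essentially the same route as the paper. The paper argues by contradiction (if $\liminf\|\phi_n\|_{L^\infty(B_{\de\e_j}(\xi_j^n))}=0$ for all $j$, then \eqref{ei1} and \eqref{cpe1} force $\|\phi_n\|_\infty\to0$, contradicting the normalization), whereas you run the same three ingredients directly; logically these are the same argument. One tiny remark: since the $L^\infty$ norm over a finite union is the maximum (not the sum) over the balls, $\|\phi_n\|_{L^\infty(\cup_jB_{\de\e_j}(\xi_j^n))}\ge\frac{1}{2C}$ already gives some $j(n)$ with $\|\phi_n\|_{L^\infty(B_{\de\e_{j(n)}}(\xi_{j(n)}^n))}\ge\frac{1}{2C}$; the pigeonhole you invoke is only needed to fix one index $j$ along a subsequence, and the resulting constant can be taken as $\kappa=\frac{1}{2C}$ rather than $\frac{1}{2Ck}$, though of course either suffices.
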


\begin{proof}[\dem]
Arguing by contradiction, if for all $j=1,\dots,k$
\begin{equation*}
\liminf_{n\to\infty} \|\phi_n\|_{L^\infty(B_{\de\e_j}(\xi^n_j))}=0,
\end{equation*}
then \eqref{ei1} and \eqref{cpe1} implies that, passing to a
subsequence if necessary, $\|\phi_n \|_\infty\to 0$ as
$n\to+\infty$. On the other hand, we know that $\|\phi\|_\infty=1$
for all $n\in\N$. This conclude \eqref{inf1}.
\end{proof}

\noindent Let us set $
\psi_{n,j}(z)=\phi_{n}\big(y_{\xi^{n}_j}^{-1}(\e_j^n z)\big)=\hat\phi_{n,j}(\e_j^nz)$ for any $j$ and $z\in B_{r_0/\e_j^n}(0)$. We notice that $\psi_{n,j}$ satisfies
$$\Delta \psi_{n,j} +{8[\mu_j^n]^2\over([\mu_j^n]^2+|z|^2)^2} \,
\psi_{n,j} = [\e_j^n]^2\hat h_{n,j}(\e_j^nz)
\quad\hbox{ in } B_{r_0/\e_j^n}(0).$$ Elliptic
estimates and \eqref{inf1} readily imply that $\psi_{n,j}$
converges uniformly over compact subsets of $\R^{2}$ to a bounded,
non-zero solution $\psi^*_j$ of
\[
\Delta \psi+ \frac{8\mu^{2}}{(\mu_j^{2}+|z|^{2})^{2}}\psi=0,\qquad \mu_j=\mu_j^*
\]
in view of $\big|[\e_j^n]^2\hat h_{n,j}(\e_j^nz)\big|\le C\|h_n\|_*$ for $z$ in compact subsets of $\R^{2}$. This implies that $\psi^*_j$ is a linear combination of the
functions $Y_{ij}$, $i=0,1,2$. Thus, we have that for some
constants $a_{ij}$, $i=0,1,2$,
$\psi^*_j=a_{0j}Y_{0j}+a_{1j}Y_{1j}+a_{2j}Y_{2j}$. See \cite{bp} for a proof. But, from
\eqref{ephin}, orthogonality conditions over $\psi_{n,j}$ pass to
the limit thanks to $\|\psi_{n,j}\|_{\infty}\leq C$ and dominated
convergence, namely,
$$\int_{\R^2}\lap Y_{ij}\,\psi^*_j=0,\qquad\text{for}\quad i=0,1,2.$$
A contradiction with \eqref{inf1} arises since this implies that $a_{0j}=a_{1j}=a_{2j}=0$. Thus, we get the estimate $\|\phi\|_\infty\le C\|h\|_*$. Hence, from the same argument shown in the proof of the Claim \ref{estcij}, we conclude the estimates \eqref{estmfe1}.

\medskip

Now, let us prove the solvability assertion. To this
purpose we consider the space
$$
H= \biggl\{ \phi \in H_0^1(S) \ : \ \int_{S} \lap_gPZ_{ij} \,\phi
\, = \, 0 \quad\hbox{ for }  \; i=0,1,2,\,j=1,\dots,k\biggr\},
$$
endowed with the usual inner product $\ds [\phi , \psi ] = \int_{S}
\langle \nabla \phi,  \nabla\psi \rangle_g\, dv_g$. Problem \eqref{plco} expressed in weak
form is equivalent to that of finding a $\phi \in H$, such that
$$
[\phi , \psi ] = \int_{S}  [ K\phi - h ]\, \psi\; dv_g,
\quad\hbox{ for all }  \,\psi \in H.
$$
Recall that $\ds \int_{S}h\,dv_g=0$. With the aid of Riesz's
representation theorem, this equation gets rewritten in $H$ in the
operator form $\phi = \ml{K}(\phi) + \tilde h$, for certain $
\tilde h \in H$, where $\ml{K}$ is a compact operator in $H$.
Fredholm's alternative guarantees unique solvability of this
problem for any $h$ provided that the homogeneous equation $ \phi
= \ml{K}(\phi) $ has only the zero solution in $H$. This last
equation is equivalent to \eqref{plco} with $h\equiv 0$. Thus
existence of a unique solution follows from the a priori estimate
\eqref{estmfe1}.

\medskip
We have just proven that the unique
solution $\phi = T_\la(h)$ of \eqref{plco} defines a continuous linear
map from the Banach space ${\ml C}_*$ of all functions $h$ in
$L^\infty(S) $ for which $\|h\|_* <+\infty$ and $\ds \int_{S}h=0$,
into $L^\infty$, with bounded norm.

\medskip
It is important to understand the differentiability of the operator $T$ with respect to the variable either
$\beta=\xi_j$ or $\beta=m_j$. Fix $h\in {\ml C}_{*}$ and let
 $\phi = T_\la(h)$. Let us recall that $\phi$ satisfies \eqref{plco}, for some
(uniquely determined) constants $c_{ij}$, $i=0,1,2$, $j=1,\dots,k$.
We want to compute derivatives of $\phi$ with respect to the
parameters $\beta=\xi_{l}$ or $\beta=m_l$. Formally $X=
\partial_{\beta} \phi$ should satisfy
$$
{L}(X) =  -\partial_{\beta}K \, \phi+ {1\over
|S|}\int_{S} \partial_{\beta}K\phi+
\sum_{i=0}^2\sum_{j=1}^kc_{ij}\,
\partial_{\beta}(\lap_gP Z_{ij}) \,+ \sum_{i=0}^2\sum_{j=1}^k d_{ij}\lap_gPZ_{ij}\,,
$$
where (still formally) $d_{ij} = \partial_{\beta}(c_{ij})$,
$i=0,1,2$, $j=1,\dots,k$. The orthogonality conditions now become
\begin{eqnarray*}
\int_{S } \lap_gPZ_{ij} X =-\int_{S}
\fr_{\beta}(\lap_gPZ_{ij})\phi, \qquad i=0,1,2,j=1,\dots,k.
\end{eqnarray*}
We will recast $X$ as follows. Consider the $Y=X+\sum_{i,j}b_{ij} PZ_{ij}$, where the coefficients $b_{ij}$ are chosen so that $Y$ satisfies the orthogonality conditions $\ds \int_S Y\lap_g PZ_{ij}=0$ for all $i,j$. The coefficients $b_{ij}$ are well-defined since they satisfy an almost diagonal system in view of \eqref{ilpzpz}. Furthermore, it holds that for $\beta=\xi_l$, $|b_{ij}|\le C \|h\|_*$ if $j\ne l$ and $|b_{il}|\le \ds{C\over \e_l} \|h\|_*$; and for $\beta=m_l$, $|b_{ij}|\le C \|h\|_*$ if $j\ne l$ and $|b_{il}|\le C|\log\e_l | \|h\|_*$, in view of $\|\fr_{\xi_{l}}(\lap_g PZ_{ij})\|_*\le C $ if $j\ne l$ and $\|\fr_{\xi_{l}}(\lap_g PZ_{il})\|_* \le \ds{C\over \e_l} $; and $ \|\fr_{m_{l}}(\lap_g PZ_{ij})\|_* \le C $ if $j\ne l$ and $\|\fr_{m_{l}}(\lap_g PZ_{il})\|_* \le C|\log\e_l |$. Then the function $X$ above can be uniquely expressed as
$$
X = T_\la (f) - \sum_{i=0}^2\sum_{j=1}^kb_{ij}\,PZ_{ij}.
$$
where the function
$$
f:= -
\partial_{\beta} K\, \phi + {1\over | S |}\int_{S} \partial_{\beta } K\phi+ \sum_{i=0}^2\sum_{j=1}^k\Big[b_{ij}\, {L}(PZ_{ij}) + c_{ij}\,\partial_{\beta }(\lap_g PZ_{ij})\Big] . 
$$
This computation is not just formal. Arguing directly by
definition it shows that indeed $\partial_{\beta}\phi = X$.
Also, we find that $\ds \|f\|_*\le {C\over \e_l}  \|h\|_*$, for $\beta=\xi_l$ and $\|f\|_*\le C |\log \e_l| \|h\|_*$, for $\beta=m_l$, in view of
$$
\|f\|_*\le \|\partial_{\beta} K\|_*\, \|\phi\|_\infty \bigg[1+ \frac{1}{|S|}\int_{S}
\rho(x)\,dv_g\bigg]+ \sum_{i=0}^2\sum_{j=1}^k\Big[ |b_{ij}|\, \|{L}(PZ_{ij})\|_* + |c_{ij}|\,\|\partial_{\beta }(\lap_g PZ_{ij})\|_*\Big]  .
$$
Indeed, it is easy to check that $\ds \int_{S}
\rho(x)\,dv_g\le C$, $\| L( PZ_{ij})\|_*\le
C\e_j $ for $i=1,2$ and $\| L( PZ_{0j})\|_*\le C $. Furthermore, from the definition of $K$ it follows that $\|\fr_{\xi_{l}}K \|_*\le \ds{C\over \e_l}$ and $\lf\| \fr_{ m_l }K \rg\|_* \le C |\log \e_l| $. Moreover,
using estimate \eqref{estmfe1} applied with R.H.S. $f$, we find that
\begin{equation*}
\begin{split}
\|\fr_{\xi_l }\phi\|_\infty &\le
C \bigg[\|f\|_*+\sum_{i=0}^2\sum_{j=1}^k |b_{ij}| \bigg],
\end{split}
\end{equation*}
so that, $\ds\|\fr_{\beta }\phi\|_*\le {C\over \e_l}  \|h\|_*$ and $\|\fr_{m_{l} }\phi\|_\infty\le C |\log \e_l | \|h\|_*$.
Finally, we conclude that
\begin{equation}\label{difx}
\| \partial_{(\xi_{l})_i} T_\la(h)\|_\infty  \le {C\over
\e_l }\, \|h\|_*\,\quad \mbox{for }\ i=1,2, l=1,\dots, k
\end{equation}
and
\begin{equation}\label{difm}
\| \partial_{m_{l}} T_\la(h)\|_\infty  \le  C\,|\log \e_l| \, \|h\|_*\ \quad \mbox{for }\ l=1,\dots, k .
\end{equation}
This finishes the proof of proposition \ref{p2}.
\end{proof}


\section{\hspace{-0.5cm}: The nonlinear problem} \label{appeB}

\noindent By Proposition \ref{p2} we now deduce the following.

\begin{proof}[{\bf Proof (of Proposition \ref{lpnlabis})}] First, note that $R\in L^\infty(S)$, $\|R\|_*<+\infty$, $\ds\int_{S} R=0$ and \linebreak $\ds \int_{S}N(\phi)=0$ for any $\phi\in C(S)$. Next, we
observe that in terms of the operator $T$ defined in Proposition
\ref{p2}, the latter problem becomes
\begin{equation}\label{ptfix}
\phi = - T\lf( R + N(\phi)\rg):=\ml{A}(\phi).
\end{equation}
For a given number $\nu>0$, let us consider
$$
\ml{F}_\nu = \{\phi\in C(S) : \| \phi \|_\infty \le
\nu \la\}
$$
From the Proposition \ref{p2}, we get
\begin{equation*}
\begin{split}
\|\ml{A}(\phi)\|_\infty & \le C \lf[ \|R \|_*+
\|N(\phi)\|_*\rg].
\end{split}
\end{equation*}
From \eqref{re} we know that $\| R \|_*\le C\la$. Furthermore, it follows that for certain $0<s<s^*<1$
$$f(V+\phi)-f(V)-f'(V)\phi=\int_0^1[f'(V+t\phi)-f'(V)]\,dt\, \phi=[f'(V+s^*\phi)-f'(V)]\phi=f''(V+s\phi)\,s^*\phi^2.$$
From the definition of $f$ in \eqref{effe} and the estimates used to prove Lemma \ref{estrr0}, it follows that
$$f''(V+s\phi)=2\la^2(V+s\phi)e^{\la(V+s\phi)^2}\lf[3+2\la(V+\phi)^2\rg]=\la Ve^{\la V^2}O(1)+\la^2e^{\la V^2}O(1)$$
so that, from the definition of $\|\cdot\|_*$ we obtain that $\|f''(V+s\phi)\|_*\le C$. Thus, we find that
\begin{equation*}
\begin{split}
\| N (\phi) \|_* & \le \Big[\|f(V+\phi)-f(V)-f'(V)\phi\|_*+\|f'(V)-K\|_*\|\phi\|_\infty\Big]\bigg[1+ \frac{1}{|S|}\int_{S}
\rho(x)\,dv_g\bigg]\\
&\le C \big[\|\phi\|_\infty^2+\la\|\phi\|_\infty\big]\le C\la^2[\nu^2+\nu],
\end{split}
\end{equation*}
in view of \eqref{k} and $\ds \int_{S}
\rho(x)\,dv_g\le C$. Hence, we get for any $\phi\in \ml{F}_\nu$,
\begin{equation*}
\|\ml{A}(\phi)\|_\infty \le C \la \lf[1 + (\nu+\nu^2)\la\rg].
\end{equation*}

On the other hand, for $\phi_1$ and $\phi_2$ and certain $0<s,t^*<1$ we have that
\begin{equation*}
\begin{split}
f(V+\phi_1)&-f(V+\phi_2)-f'(V)(\phi_1-\phi_2)\\ & =\int_0^1[f'\big(V+\phi_2+t\{\phi_1-\phi_2\}\big)-f'(V)]\,dt\, [\phi_1-\phi_2]\\
&=\lf[f'\big(V+t^*\phi_1+\{1-t^*\}\phi_2\big)-f'(V)]\, [\phi_1-\phi_2\rg]\\
&=f''\big(V+s[t^*\phi_1+\{1-t^*\}\phi_2]\big)\,[t^*\phi_1+\{1-t^*\}\phi_2]\,[\phi_1-\phi_2],
\end{split}
\end{equation*}
so that,
$$\|f(V+\phi_1)-f(V+\phi_2)-f'(V)(\phi_1-\phi_2)\|_*\le C\,[\|\phi_1\|_\infty+\|\phi_2\|_\infty]\,\|\phi_1-\phi_2\|_\infty,$$
in view of $\|f''\big(V+s[t^*\phi_1+\{1-t^*\}\phi_2]\big)\|_*\le C$. Hence, given any $\phi_1,\phi_2\in\ml{F}_\nu$, we have that
\begin{equation*}
\begin{split}
\|N(\phi_1)-N(\phi_2)\|_* & \le C (\|\phi_1\|_\infty +
\|\phi_2\|_\infty) \|\phi_1-\phi_2\|_\infty+C\la \|
\phi_1-\phi_2\|_\infty\\
& \le C \la[\nu+1]\,\|\phi_1-\phi_2\|_\infty
\end{split}
\end{equation*}
with $C$ independent of $\nu$. Therefore, from the Proposition \ref{p2}
\begin{equation*}
\begin{split}
\|\ml{A}(\phi_1)-\ml{A}(\phi_2)\|_\infty & \le C\|N(\phi_1)-N(\phi_2)\|_*\le C \la[\nu+1]\,\|\phi_1-\phi_2\|_\infty\\
\end{split}
\end{equation*}
It follows that for all $\e$ sufficiently small $\ml{A}$ is a
contraction mapping of $\ml{F}_\nu$ (for $\nu$ large enough), and
therefore a unique fixed point of $\ml{A}$ exists in $\ml{F}_\nu$.
\medskip

Let us now discuss the differentiability of $\phi$ depending on
$(\xi,m)$, i.e., $(\xi,m)\mapsto \phi(\xi,m)\in C(S)$ is $C^1$.
Since $R$ depends continuously (in the $*$-norm) on $(\xi,m)$, using
the fixed point characterization \eqref{ptfix}, we deduce that the
mapping $(\xi,m)\mapsto \phi$ is also continuous. Then, formally
\begin{equation*}
\begin{split}
\fr_{\beta } N(\phi)  = &\,[f'(V+\phi)-f'(V)-f''(V)\phi ] \fr_\be V+[f'(V+\phi)-f'(V)]\fr_\be\phi\\
&-{1\over |S|} \int_{S}\big\{ [f'(V+\phi)-f'(V)-f''(V)\phi ] \fr_\be V+[f'(V+\phi)-f'(V)]\fr_\be\phi \big\} \\
&+[f''(V)\fr_\be V-\partial_{\be } K]\phi + [f'(V)-K] \fr_\be\phi\\
 & \, -{1\over |S|} \int_{S}\big( [f''(V)\fr_\be V-\partial_{\be } K]\phi + [f'(V)-K] \fr_\be\phi \big).
\end{split}
\end{equation*}
so that, we estimate as follows
\begin{equation*}
\begin{split}
\|\fr_{\be } N(\phi)\|_* & \le C \Big[ \|f''(V+s'\phi) - f''(V)\|_* \ \|\phi\|_\infty  \|\fr_\be V\|_\infty + \|f''(V+s'\phi)\|_*\|\phi\|_\infty \|\fr_\be\phi\|_\infty \\
&\ \quad\quad +\|f''(V)\fr_\be V-\partial_{\be } K]\|_*\|\phi\|_\infty + \|[f'(V)-K]\|_* \|\fr_\be\phi\|_\infty \Big],
\end{split}
\end{equation*}
for some $s'\in (0,1)$. In particular, precisely for $\be=\xi_l$ we obtain
\begin{equation*}
\begin{split}
\|\fr_{\xi_l } N(\phi)\|_* &\le C\Big[ \|\phi\|_\infty^2\|\fr_{\xi_l} V\|_\infty + \|\phi\|_\infty \|\fr_{\xi_l}\phi\|_\infty+{\la\over\e_l}\|\phi\|_\infty + \la \|\fr_{\xi_l} \phi\|_\infty\Big]\\
 & \le C \Big[ {\la^2\over \e_l} + \la \|\fr_{\xi_l}\phi\|_\infty \Big],
\end{split}
\end{equation*}
in view of $\|f''(V+s'\phi) - f''(V)\|_*\le C\|\phi\|_\infty$, $\ds \| \fr_{\xi_l} V \|_\infty \le {C\over \e_l}$, $\ds\|f''(V)\fr_{\xi_l}V - \fr_{\xi_l}K\|_*\le {C\la\over\e_l}$ and estimate \eqref{k}, and for $\be=m_l$ we obtain
\begin{equation*}
\begin{split}
\|\fr_{m_l } N(\phi)\|_* &\le C\Big[ \|\phi\|_\infty^2\|\fr_{m_l} V\|_\infty + \|\phi\|_\infty \|\fr_{m_l}\phi\|_\infty+\la |\log \e_l|\, \|\phi\|_\infty + \la \|\fr_{m_l} \phi\|_\infty\Big]\\
 & \le C \Big[ \la^2|\log \e_l | + \la \|\fr_{m_l}\phi\|_\infty \Big],
\end{split}
\end{equation*}
in view of $\| \fr_{m_l} V \|_\infty \le C|\log \e_l |$, $\|f''(V)\fr_{m_l}V - \fr_{m_l}K\|_*\le C\la|\log\e_l|\le C$. Also, observe that we have
$$\fr_{\be }\phi= - (\fr_{\be }T)\lf( R + N (\phi)\rg) - T\lf(\fr_{\be }\lf[ R + N (\phi)\rg]\rg).$$
So, using \eqref{difx} and previous estimates, we get
\begin{equation*}
\begin{split}
\|\fr_{\xi_l }\phi\|_\infty  &\le {C\over\e_l} \, \|R + N(\phi)\|_* + C \,
\|\fr_{\xi_{l}}(R + N (\phi))\|_*\\
& \le {C\over \e_l} \,\lf[ \la +
\la \| \phi\|_\infty+ \|\phi\|_\infty^2\rg] +C\lf[ {\la \over \e_l} +  {\la^2\over \e_l} + \la \|\fr_{\xi_l}\phi\|_\infty \rg]\\
& \le {C\la \over \e_l}  + C \la  \|\fr_{\xi_l}\phi\|_\infty .
\end{split}
\end{equation*}
and similarly, using \eqref{difm} and previous estimates, we get
\begin{equation*}
\begin{split}
\|\fr_{m_l }\phi\|_\infty  &\le C |\log \e_l |
\, \|R + N(\phi)\|_* + C \, \|\fr_{m_{l}}(R + N (\phi))\|_*\\
& \le C |\log \e_l | \,\lf[ \la +
\la \| \phi\|_\infty+ \|\phi\|_\infty^2\rg] +C\lf[ \la |\log \e_l | +  \la^2 |\log \e_l | + \la \|\fr_{\xi_l}\phi\|_\infty \rg]\\
& \le C\la |\log \e_l|  + C \la  \|\fr_{\xi_l}\phi\|_\infty .
\end{split}
\end{equation*}
We have used an estimate for $\|\fr_{\be } R\|_*$. From the definition of $V$, the definition of $\fr_{\be } R$
$$\fr_{\be} R(y)=\lap_g \fr_{\be }V(y)+ f'(V) \fr_{\be }V-{1\over|S|} \int_{S}f'(V) \fr_{\be }V \,dv_g,$$
similar computations to deduce \eqref{re} and from the definition of *-norm it follows that
$$\|\fr_{\xi_{l}} R\|_*\le \dfrac{C\la}{\e_l}\qquad\text{ and }\qquad\|\fr_{m_{l}} R\|_*\le C\la|\log \e_l |.$$ Thus, we conclude \eqref{cotadphi}.

The above computations can be made rigorous by using the implicit
function theorem and the fixed point representation \eqref{ptfix}
which guarantees $C^1$ regularity in $(\xi,m)$.
\end{proof}

\section{\hspace{-0.5cm}: Proof of Lemma \ref{cpfc0bis} } \label{appeC}

\begin{proof}[\dem]
Let us differentiate the function $\ml{F}_\la(\xi,m)$ with respect to either
$\be=(\xi_l)_q$ or $\be=m_l$, with $q=1,2$ and $l=1,\dots,k$. Since $\sqrt\la \,V(\xi,m) =U(\xi,m)$ and $\sqrt\la\,\phi(\xi,m) = \ti\phi(\xi,m)$, we can differentiate directly $J_\la\big(\sqrt\la[V+\phi] \big)$ (under the integral sign), so that integrating by parts we get
\begin{equation*}
\begin{split}
\fr_{\be }\ml{F}_\la(\xi,m) = &\,\sqrt\la\,
DJ_\la\lf(\sqrt{\la}[V+\phi]\rg)\lf[ \fr_{\be}V
+ \fr_{\be }\phi\rg]\\
=&\,-\la\int_S\lf[\Delta_g(V+\phi) + \la( V+\phi) e^{\la(V+\phi)^2}\rg]\,\lf[\fr_{\be }V + \fr_{\be }\phi\rg]\,dv_g\\
=&\, -\la \sum_{i=0}^{2}
\sum_{j=1}^k c_{ij} \int_{S} \Delta_gPZ_{ij}\,\lf[\fr_{\be }V + \fr_{\be }\phi\rg],
\end{split}
\end{equation*}
since $\ds \int_{S} \big(\fr_{\be }V + \fr_{\be }\phi\big)=0$. From the result of \ref{lpnlabis}, this expression defines a continuous function of $(\xi,m)$. Let us stress that $\mu_j=\mu_j(\xi,m)$ and $\e_j=\e_j(m_j)$. Hence, from \eqref{dxiv} we have that uniformly in $S$
\begin{equation*}
\begin{split}
\fr_{(\xi_l)_q}V(x)
=&\,- {m_l 	\over \mu_l\e_l} \chi_l {2\mu_l\e_l\fr_{(\xi_l)_q}(|y_{\xi_l}(x)|^2)\over \mu_l^2\e_l^2+|y_{\xi_l}(x)|^2} +O(1)\\
\end{split}
\end{equation*}
and from \eqref{del} and \eqref{dmv}
\begin{equation*}
\begin{split}
\fr_{m_l }V(x)
=&\, 	 -2 \chi_l\log(\mu_l^2\e_l^2+|y_{\xi_l}(x)|^2) + \chi_l {8\mu_l^2\e_l^2\log\e_l \over \mu_l^2\e_l^2+|y_{\xi_l}(x)|^2} +O(1),
\end{split}
\end{equation*}
in view of $ \e_l\fr_{m_l} \e_l=\ds -{2\e_l^2\over m_l}\log\e_l+O(1)$. Let us assume that $D_\xi \ml{F}_\la(\xi,m)=0$ and $D_m\ml{F}_\la(\xi,m)=0$. Then,
from the latter equality and the estimates \eqref{cotadphi} we get
$$
\sum_{i=0}^{2} \sum_{j=1}^k c_{ij} \int_{S} \lap_gPZ_{ij}\,\lf[\e_l\fr_{(\xi_{l})q}V + O(\la)\rg]=0,\qquad q=0,1,2,\;l=1,\dots,k
$$
and also,
$$
\sum_{i=0}^{2} \sum_{j=1}^k c_{ij} \int_{S} \lap_gPZ_{ij}\,\lf[{\fr_{m_l}V\over\log\e_l} + O(\la)\rg]=0,\qquad \;l=1,\dots,k.
$$
Using
$$\e_l\fr_{(\xi_{l})_q}V = -\ds{m_l\over \mu_l} \chi_l {2\mu_l\e_l\fr_{(\xi_l)_q}(|y_{\xi_l}(x)|^2)\over \mu_l^2\e_l^2+|y_{\xi_l}(x)|^2} + O(\e_l)$$
and
$$\dfrac{\fr_{m_{l} }V}{\log\e_l} = \ds{1\over \log \e_l} \chi_l[U_l-\log(8\mu_l^2\e_l^2)] +2\chi_l[Z_{0l}+2] + O\Big(\ds{1\over|\log\e_l|}\Big),$$
where $O(\e_l)$ and $O(|\log \e_l|^{-1})$ are in the $L^\infty$ norm as $\la\to 0$, it follows
$$
\sum_{i=0}^{2} \sum_{j=1}^m c_{ij} \int_{S} \lap_g PZ_{ij}\,\lf[\chi_l {2\mu_l\e_l\fr_{(\xi_l)_q}(|y_{\xi_l}(x)|^2)\over \mu_l^2\e_l^2+|y_{\xi_l}(x)|^2} + o(1)\rg]=0,\qquad q=0,1,2,\; l=1,\dots,m.
$$
$$
\sum_{i=0}^{2} \sum_{j=1}^m c_{ij} \int_{S} \lap_g PZ_{ij}\,\lf[\chi_l(Z_{0l} +2) + o(1)\rg]=0,\qquad l=1,\dots,m.
$$
with $o(1)$ small in the sense of the $L^\infty$ norm as $\la\to0$. The above system is diagonal dominant and we thus get $c_{ij}=0$ for $i=0,1,2$, $j=1,\dots,k$. We have used that
$$\int_{S} \lap_g PZ_{ij}\,\chi_l[U_l-\log(8\mu_l^2\e_l^2)]=O(1).$$
The proof of Lemma \ref{cpfc0bis} is finished.
\end{proof}

\end{appendices}





\small

\end{document}